
\documentclass[a4paper, 11pt]{amsart}

\usepackage{amsmath,amssymb,amsthm,amscd,enumerate,setspace}
\usepackage[all]{xy}
\usepackage[dvips]{graphicx}

\usepackage{hyperref}

\input xypic
\xyoption{all}

\textwidth=6.2in
\hoffset=-.6in
\voffset=-.5in
\textheight=9.5 in

\newcommand{\rar}{\rightarrow}
\newcommand{\lar}{\longrightarrow}

\newcommand{\llar}{-\kern-5pt-\kern-5pt\longrightarrow}
\newcommand{\lllar}{-\kern-5pt-\kern-5pt\llar}

\newtheorem{Theorem}{Theorem}[section]
\newtheorem{Lemma}[Theorem]{Lemma}
\newtheorem{Corollary}[Theorem]{Corollary}
\newtheorem{Proposition}[Theorem]{Proposition}

\newtheorem{Conjecture}[Theorem]{Conjecture}

\newtheorem{Remark}[Theorem]{Remark}
\newtheorem{Example}[Theorem]{Example}
\newtheorem{Definition}[Theorem]{Definition}
\newtheorem{Question}[Theorem]{Question}

\def\Ass{\mbox{\rm Ass}}

\def\ds{\displaystyle}

\def\gr{\mbox{\rm gr}}

\def\e{\mathrm{e}}

\def\m{\mathfrak{m}}

\def\QED{\hfill$\Box$}

\def\Spec{\mbox{\rm Spec}}

\def\xx{{\mathbf x}}
\def\TT{{\mathbf T}}
\def\ff{{\mathbf f}}

\def\g2{{\mathbf g}}

\def\xx{{\mathbf x}}
\def\aa{{\mathbf a}}

\def\ttt{{\mathbf t}}
\def\H{{\mathrm H}}

\def\m{{\mathfrak m}}
\def\p{{\mathfrak p}}







\newcommand{\Proof}{\begin{proof}[\bf Proof]}
\newcommand{\Qed}{\end{proof}}

\newcommand{\depth}{\mathop{\mathrm{depth}}\nolimits}
\def\Ker{\mathrm{Ker}}

\def\e{\mathrm{e}}
\def\m{\mathfrak m}

\def\p{\mathfrak p}

\def\q{\mathfrak q}

\def\H{\mathrm{H}}

\newcommand{\rmj}{\mathrm{j}}

\newcommand{\rmG}{\mathrm{G}}
\newcommand{\rmH}{\mathrm{H}}

\newcommand{\calC}{\mathcal{C}}
\newcommand{\calD}{\mathcal{D}}

\newcommand{\calS}{\mathcal{S}}

\newcommand{\fka}{\mathfrak{a}}

\newcommand{\fkm}{\mathfrak{m}}
\newcommand{\fkn}{\mathfrak{n}}

\newcommand{\fkp}{\mathfrak{p}}
\newcommand{\fkq}{\mathfrak{q}}

\newcommand{\fkM}{\mathfrak{M}}

\def\depth{\mbox{\rm depth }}

\def\Ass{\mathrm{Ass}}

\def\Spec{\mathrm{Spec}}

\begin{document}

\baselineskip=15pt

\title{{Hilbert polynomials of \MakeLowercase{$\mathbf{j}$}-transforms}}

\thanks{AMS 2010 {\em Mathematics Subject Classification}.
Primary 13B10; Secondary 13D02, 13H10,  13H15, 14E05.\\
{\bf  Key Words and Phrases:} Amenable partial system of parameters, approximation complex, Buchsbaum module, Cohen-Macaulay ring,  $d$-sequence,    Hilbert function,  $\mathbf{j}$-multiplicity, $\mathbf{j}$-transform. \\
The second author was partially supported by the Sabbatical Leave Program at Southern Connecticut State University (Spring 2014).
}

\author{Shiro Goto}
\address{Department of Mathematics, School of Science and Technology, Meiji University, 1-1-1 Higashi-mita, Tama-ku, Kawasaki 214-8571, Japan}
\email{goto@math.meiji.ac.jp}

\author{Jooyoun Hong}
\address{Department of Mathematics\\
Southern Connecticut State University\\
501 Crescent Street, New Haven, CT 06515-1533,  U.S.A.}
\email{hongj2@southernct.edu}

\author{Wolmer V. Vasconcelos}
\address{Department of Mathematics \\ Rutgers University \\
110 Frelinghuysen Rd, Piscataway, NJ 08854-8019, U.S.A.}
\email{vasconce@math.rutgers.edu}

\begin{abstract}

 We study transformations  of finite modules over Noetherian local rings
that attach to a module $M$  a graded module $\H_{(\xx)}(M)$ defined
   via partial systems of parameters $\xx$ of $M$. Despite the generality of the process, which are called 
   $\mathbf{j}$-transforms, 
   in numerous cases they  have interesting cohomological properties. We focus on deriving the Hilbert functions of $\mathbf{j}$-transforms   and studying the significance of 
  the vanishing of some of its coefficients.
\end{abstract}

\maketitle


\section{Introduction}
Let $(R, \m)$ be a Noetherian local ring and $I$ an ideal of $R$. For a finite $R$-module 
$M$,
our aim is to
study the 
 metrics and homological properties
 associated to the composition of the two  functors 
 \[ M \mapsto \gr_I(M) = \bigoplus_{n\geq 0} I^nM/I^{n+1}M \mapsto \H_I(M) = \H^0_{\m}(\gr_I(M)).\]
 Such transformation $\H_{I}(M)$ could be appropriately called the {\em $\mathbf{j}$-transform} of $M$ relative to $I$.
The study of the $\mathbf{j}$-transform was initiated by
 Achilles and Manaresi \cite{AMa93}
 who made use of the   fact that $\H_{I}(M)=\bigoplus_{k \geq 0}\H_k$
has an associated numerical function
$n \mapsto \psi^{M}_{I}(n)= \sum_{k \le n}\lambda(\H_k)$, where $\lambda(-)$ denotes the length. This function 
 is called the {\em $\mathbf{j}$-function} of $M$ relative to $I$ and is  a broad generalization of the classical Hilbert function -- the case 
 where $I$ is an $\m$-primary ideal.
 Its Hilbert polynomial
\[ \sum_{i = 0}^{r} (-1)^i {\rmj}_i(I;M) {{n+ r-i}\choose{r-i}} \]
 will be  referred  to as the {\em $\mathbf{j}$-polynomial} of $M$ relative to $I$, where $r$ stands for the analytic spread of $I$.
 In general it is very difficult to predict properties of the $\mathbf{j}$-transform $\H_{I}(M)$, beginning
with their Krull dimensions or the {\em $\mathbf{j}$-coefficients} ${\rmj}_i(I;M)$. Nevertheless several  authors have succeeded in applying the construction to extend the full array of classical 
integrality criteria for Rees algebras and modules   
 (\cite{AMa93}, \cite{Ciuperca}, \cite{FM}, \cite{PX12} and \cite{UV08}).

Our goal here is to study a different facet of the $\mathbf{j}$-polynomials. The specific aim is to derive explicit formulas
for the $\mathbf{j}$-coefficients $\rmj_i(I;M)$ in terms of properties of $M$ known a priori and explore the significance of their vanishing. For
that we limit ourselves to ideals generated by partial systems of parameters of $M$ or even special
classes of modules.
Thus we let $\xx= x_1, \ldots, x_r$ be a partial system of parameters of $M$, that
 is $\dim M = r + \dim M/(\xx)M$, and set $I= (\xx)$ and $\rmj_i(\xx;M) = \rmj_i(I;M)$.

A general issue is what the values of $\rmj_1(\xx;M)$ say about $M$ itself.
In \cite{chern3}, \cite{chern5},  \cite{chern2} and \cite{chern}, the authors and colleagues  studied the values of a special class of these coefficients.
For a Noetherian local ring $R$ and a finite $R$-module $M$, we considered the Hilbert coefficients
$\e_i(\xx;M)$ associated to filtrations defined by a system  $\xx$ of parameters of $M$, more precisely to the Hilbert
functions
\[ n \mapsto \lambda(M/(\xx)^{n+1}M)\]
and made use of the values of $\rmj_1(\xx;M)$
as the means to detect various properties of $M$ (e.g., Cohen-Macaulay, Buchsbaum, finite local cohomology, etc.). Here we seek to extend these probes to cases when $r< \dim M$.  The significant distinction between $\gr_{(\xx)}(M)$ and $\H_{\m}^0(\gr_{(\xx)}(M))$ is that when $r < \dim M$, the latter may not
be homogeneous and therefore the vanishing of some of its Hilbert coefficients does not place them
entirely in the context of  \cite{chern3},  \cite{chern2} and \cite{chern}.

We illustrate these issues with a series of  questions. Let $R$ be a Noetherian local ring and $M$ a finite $R$-module.  Let
$I$ be an ideal generated by a partial system of 
parameters $\xx= x_1, \ldots, x_r $  of $M$ and let $\rmG=\gr_{I}(R)$ be the associated graded ring.   Then the $\rmG$-module
$\H_{I}(M)= \H_{\m}^0(\gr_{I}(M))$ has dimension at most $r$ (Proposition~\ref{lemma1}).
We list some questions similar to those  raised in \cite{chern} for a  full system of parameters:

\begin{itemize}

\item[{\rm (i)}] What are the possible values of $\dim_{\rmG} \H_{I}(M) $? Note that $\H_{I}(M)=(0)$ may happen or $\H_{I}(M) \neq (0)$ but of dimension zero.

\item[{\rm (ii)}] What is the signature of $\rmj_1(\xx; M)$?
 If $r= \dim_{\rmG} \H_{I}(M)$, is $\rmj_1(\xx; M) \leq 0$? The answer is affirmative if $\H_{I}(M)$ is generated in 
 degree $0$, because $\rmj_1(\xx; M) = \e_1(\ff, \H_{I}(M))$ and
these coefficients are always non-positive according to \cite[Theorem 3.6]{MSV} and \cite[Corollary 2.3]{chern5}, where $\ff = f_1, \ldots, f_r $ denotes the initial forms of $x_i's$ relative to $I$.

\item[{\rm (iii)}] If $M$ is unmixed,   $r=\dim_{\rmG} \H_{I}(M)$, and  $\rmj_1(\xx; M)=0$, then is $\xx$  an $M$-regular sequence?
The answer is obviously no. What additional restriction is required?
\end{itemize}

The questions (ii) and (iii) were dealt with in \cite{chern3}, \cite{chern5}, \cite{chern2}  and \cite{chern} for $r=\dim M$, but we do not know much in the other cases. 
We shall focus on a special kind of partial systems of parameters which will be shown to be ubiquitous.
For   a finite $R$-module $M$, we call a partial system $\xx= x_1, \ldots, x_r $ of parameters of $M$ {\em amenable} if the Koszul homology module  $\H_1(\xx;M)$ has finite length (and hence $\H_i(\xx;M)$ have finite
length for all $i\geq 1$).

Let us state one of our main questions.

\begin{Conjecture}\label{j1is0}{\rm Let $(R, \m)$ be a Noetherian local ring and 
$M$ a finite unmixed  $R$-module. Let $\xx=x_1, \ldots, x_r $
be a partial system of parameters of $M$. Suppose that $\xx$ is an amenable $d$-sequence relative to $M$, that  $\dim \H^0_{\m}(\gr_{(\xx)}(M))=r$,  and that $\rmj_1(\xx;M)= 0$.  Then $\xx$ is a regular sequence on $M$. 
}\end{Conjecture}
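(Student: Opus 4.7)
The plan is to exploit the approximation-complex technology of Herzog--Simis--Vasconcelos, which was designed precisely to handle $d$-sequences. Since $\mathbf{x}$ is a $d$-sequence on $M$, the approximation complex $\mathcal{M}_\bullet(\mathbf{x}; M)$ is acyclic and supplies a resolution
\[
\cdots \to \mathrm{H}_2(\mathbf{x}; M) \otimes_{R/I} S(-2) \to \mathrm{H}_1(\mathbf{x}; M) \otimes_{R/I} S(-1) \to M/IM \otimes_{R/I} S \to \mathrm{gr}_I(M) \to 0,
\]
where $S = (R/I)[T_1, \ldots, T_r]$. Since $I$ annihilates each Koszul homology $\mathrm{H}_i(\mathbf{x}; M)$, the term $\mathrm{H}_i(\mathbf{x}; M) \otimes_{R/I} S(-i)$ in graded degree $n$ is a direct sum of $\binom{n-i+r-1}{r-1}$ copies of $\mathrm{H}_i(\mathbf{x}; M)$, and amenability makes each of these modules $\mathfrak{m}$-torsion of finite length in every graded degree.

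Because the Koszul terms are all $\mathfrak{m}$-torsion, applying $\mathrm{H}^0_\mathfrak{m}(-)$ to the short exact sequence
\[
0 \to K \to M/IM \otimes_{R/I} S \to \mathrm{gr}_I(M) \to 0
\]
(with $K$ the image of $\mathrm{H}_1(\mathbf{x}; M) \otimes_{R/I} S(-1)$) yields the short exact sequence $0 \to K \to \mathrm{H}^0_\mathfrak{m}(M/IM) \otimes_{R/I} S \to \mathrm{H}_I(M) \to 0$, because the connecting map lands in $\mathrm{H}^1_\mathfrak{m}(K) = 0$. Computing the Hilbert function of $K$ from the higher part of the approximation complex as an Euler characteristic (all terms there are already $\mathfrak{m}$-torsion) and integrating should give
\[
\psi^M_I(n) = \lambda\bigl(\mathrm{H}^0_\mathfrak{m}(M/IM)\bigr) \binom{n+r}{r} + \sum_{i \geq 1} (-1)^i \lambda\bigl(\mathrm{H}_i(\mathbf{x}; M)\bigr) \binom{n-i+r}{r}.
\]
The dimension hypothesis $\dim \mathrm{H}_I(M) = r$ forces $\mathrm{H}^0_\mathfrak{m}(M/IM) \neq 0$, pinning down $j_0(\mathbf{x}; M) = \lambda(\mathrm{H}^0_\mathfrak{m}(M/IM))$. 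Expanding each $\binom{n-i+r}{r}$ in the basis $\{\binom{n+r-k}{r-k}\}_k$ via Pascal's identity and reading off the coefficient of $\binom{n+r-1}{r-1}$ produces
\[
j_1(\mathbf{x}; M) = -\lambda\bigl(\mathrm{H}_1(\mathbf{x}; M)\bigr) + 2\lambda\bigl(\mathrm{H}_2(\mathbf{x}; M)\bigr) - 3\lambda\bigl(\mathrm{H}_3(\mathbf{x}; M)\bigr) + \cdots.
\]

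To finish, I would invoke the rigidity of Koszul homology in a Noetherian local ring: $\mathrm{H}_1(\mathbf{x}; M) = 0$ is equivalent to $\mathbf{x}$ being $M$-regular and then forces all higher $\mathrm{H}_i(\mathbf{x}; M)$ to vanish, giving exactly the desired conclusion. The problem thus reduces to showing that the alternating sum above cannot vanish unless $\mathrm{H}_1(\mathbf{x}; M)$ does. Here the unmixedness of $M$, combined with the structure theorems for Koszul homology of $d$-sequences (which express each $\mathrm{H}_i(\mathbf{x}; M)$ via specific colon modules of the form $((x_{j_1}, \ldots, x_{j_{i-1}})M :_M x_{j_i})/(x_{j_1}, \ldots, x_{j_{i-1}})M$), should give inequalities of the form $\lambda(\mathrm{H}_i(\mathbf{x}; M)) \leq c_i \cdot \lambda(\mathrm{H}_1(\mathbf{x}; M))$ with constants $c_i$ small enough to prevent cancellation with the $-\lambda(\mathrm{H}_1)$ term. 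The hard part will be exactly this last step: establishing enough control over the higher Koszul pieces to guarantee that the alternating sum is a sign-coherent multiple of $\lambda(\mathrm{H}_1(\mathbf{x}; M))$; a secondary technical point is justifying the Euler-characteristic computation of $\lambda(K_n)$ despite $\mathrm{H}^0_\mathfrak{m}$ being only left exact, which is what makes the amenability hypothesis (finite length of every $\mathrm{H}_i$) indispensable.
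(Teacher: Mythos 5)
The statement you are proving is labeled a \emph{Conjecture} in the paper, and the paper does not prove it in full generality; it only establishes special cases (Theorem~\ref{21-1}: $\depth M \ge r-1$, or $r=3$ with $M$ unmixed; Theorem~\ref{FLCj1}: modules with finite local cohomology and positive depth). Your first half faithfully reconstructs the paper's own machinery: acyclicity of the approximation complex for a $d$-sequence, exactness after applying $\H^0_{\m}(-)$ thanks to amenability (Theorem~\ref{jdseqcx}), and the resulting formula $\rmj_1(\xx;M)=\sum_{k\ge 1}(-1)^k k\,\lambda(\H_k(\xx;M))$ (Corollary~\ref{jdseqCor}). (A small slip: the same computation gives $\rmj_0(\xx;M)=\sum_{k\ge 0}(-1)^k h_k$, not $h_0$ alone; compare Example~\ref{j1example}, where $\rmj_0=\lambda(C)$ while $h_0=r\lambda(C)$.) Up to this point you and the paper agree, and the reduction you arrive at --- that it suffices to show the alternating sum cannot vanish unless $\H_1(\xx;M)=0$ --- is, by Corollary~\ref{jdseqCor}(2), precisely the open content of the conjecture.

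The gap is the final step, which you flag as ``the hard part'' but which is where the entire difficulty and the openness of the statement live. Moreover, the specific strategy you propose --- inequalities $\lambda(\H_i(\xx;M))\le c_i\,\lambda(\H_1(\xx;M))$ with constants small enough to forbid cancellation --- cannot work: in the paper's own Example~\ref{j1example} one has $h_i=\lambda(C)\binom{r}{i+1}$ for $1\le i\le r-1$, so for $r\ge 5$ the single term $2h_2$ already exceeds $h_1$, and yet $\rmj_1(\xx;M)=-\lambda(C)\ne 0$. The non-vanishing there comes from a hidden identity rather than term-by-term domination: the paper shows (Theorem~\ref{19}, Corollary~\ref{20}) that the alternating sum collapses to the single non-negative length $k_1(\xx;M)=h^0\bigl(M/[(I_{r-2}M:_M x_{r-1})+x_{r-1}M]\bigr)$, and the conjecture asks whether the vanishing of this one length, together with unmixedness and $\dim\H^0_{\m}(\gr_{(\xx)}(M))=r$, forces $\xx$ to be $M$-regular. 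The partial answers in the paper proceed by induction on $r$ through $M/x_1M$ and the long exact sequence of local cohomology, under extra depth or FLC hypotheses --- not by estimating Koszul homology lengths. As written, your argument does not prove the statement, and indeed no complete proof is known.
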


The unmixedness hypothesis serves various purposes that begin from avoiding trivial counterexamples to 
the enabling of several constructions for cohomological calculations.
 There are two versions
of the conjecture, the {\bf strong} version as stated and a {\bf weak} one. The latter assumes that
$\rmj_1(\xx;M) = 0$ for {\bf all} partial systems of parameters of $r$ elements.

In outline, our approach to describe some of the $\mathbf{j}$-transform $\H_{\m}^0(\gr_{(\xx)}(M))$ is the following. First we show how to find an amenable partial system of parameters of $M$ that is a $d$-sequence relative to $M$ (Proposition~\ref{ubiquityth}). 
 Once provided with such a partial system  $\xx$ of parameters, $\H_{\m}^0(\gr_{(\xx)}(M))$ occurs as the homology of the complex obtained by applying the section functor $\H_{\m}^0(-)$ to
  the approximation complex $\mathcal{M}(\xx;M)$ (\cite{HSV3II}):
  \begin{center}
{\small $ 0 \rar  \H_{\m}^0(\H_r(\xx;M)) \otimes S[-r] \rar \cdots \rar  \H_{\m}^0(\H_1(\xx;M))\otimes S[-1] \rar \H_{\m}^0(\H_0(\xx;M)) \otimes
 S \rar \H_{\m}^0(\gr_{\xx}(M)) \rar 0,$}
 \end{center}
where $S = R[\TT_1, \ldots, \TT_r]$ is the polynomial ring (see Theorem~\ref{jdseqcx}). When we add to this the assumption that $\H_1(\xx;M)$ has finite length, we derive the Hilbert series $[\![\rmH]\!]$ of $\H= \H_{\m}^0(\gr_{(\xx)}(M))$ in terms of the Koszul homology modules: If
$h_i = \lambda(\H_i(\xx;M))$, $i>0$, and $h_0 = \lambda(\H_{\m}^0(\H_0(\xx;M))$, then
\[ [\![\H]\!]= \frac{\sum_{i=0}^r (-1)^i h_i \ttt^i}{(1-\ttt)^{r}}, \]
where $\ttt$ denotes the indeterminate. 
Several properties of $\H$ can be read right away from the complex, such as the vanishing of $\H$ and  the Cohen-Macaulayness of $\H$ (Corollary~\ref{vanishingofH}). Both of which require 
that $\xx$ be a regular sequence on $M$.  Besides the expression of the Hilbert function of $\H$ in terms of the 
values of $ \lambda( \H_i(\xx;M))$, 
 Corollary~\ref{jdseqCor} interprets several properties of the relationship between $\xx$ and $M$.
 For instance, if $\dim \H = r$, then $\rmj_1(\xx;M)\leq 0$, the exact behavior of the case when
 $\xx$ is a full system of parameters. However, it is not enough  to determine the Krull dimension of $\H$ much less to resolve our conjecture
 on whether the vanishing of $\rmj_1(\xx;M)$  means that $\xx$ is a regular sequence on $M$. This would require that $\H$ be unmixed along with $M$.
In the remaining of the section we establish the conjecture in several special cases
 that places restrictions on the projective dimension of $M$ or the $\xx$-depth of $M$.

In the following sections we develop formulas for the $\mathbf{j}$-coefficients based on the finiteness of
of the local cohomology associated to certain partial systems of parameters.  In section 4,  we derive (see
 Theorem~\ref{19}) the Hilbert function of $\H$ as
 \[\psi^{M}_{(\xx)}(n) = \sum_{i=0}^{r}k_{i}(\xx ; M) \binom{n+r-i}{r-i} \quad
 \mbox{\rm  for all $n \ge 0$}\] 
with the  $k_{i}(\xx; M)$ being some positive linear combinations of the lengths of local cohomology modules. They
are therefore very convenient to study relationships between depth conditions and the vanishing of the $\rmj_i(\xx; M)$.

The focus in Section 5 is on the boundedness of {\it all} $\mathbf{j}$-coefficients $\rmj_i(\xx;M)$ for {\it all} amenable partial
systems of parameters which are $d$-sequences relative to the module $M$, a 
problem treated in \cite{chern3} and \cite{chern5} for full systems of parameters. The corresponding result  (Theorem~\ref{Finjplus}) in this more delicate
setting
 requires modules admitting one strong $d$-sequence, a property that holds for all
 homomorphic images of Gorenstein rings.
Finally, in Section 6 we give structure theorems for the $\mathbf{j}$-transforms of Buchsbaum and sequentially 
Cohen-Macaulay modules. They come with a detailed description of their Hilbert functions.

\section{Amenable partial systems of parameters} 

In this section we discuss a class of partial systems of parameters that provides a gateway to the calculation
of numerous $\mathbf{j}$-polynomials because of their flexibility in the calculation of local cohomology.

\begin{Definition}{\rm 
  Let $R$ be a Noetherian local ring and let
$M$ be a finite $R$-module of dimension $d>0$.
A partial system $\xx=x_1, \dots, x_r $ ($0 \leq r \leq d$) of parameters of $M$ is said to be {\em amenable} to $M$, if  
the Koszul homology modules $\H_i(\xx;M)$ have finite length for all $i >0$. 
}\end{Definition}

A simple example is that of a Noetherian local ring $R$ of dimension $2$, with $x$ satisfying
$0:x = 0:x^2$ and $x$ not contained in any associated prime $\p$ of $R$ of codimension at most one.
Similarly if $M$ is Cohen-Macaulay on the punctured support of $M$, then any partial system of parameters  is amenable. Under mild additional conditions, these are the modules with {\it finite local cohomology} (FLC), that is
the cohomology modules $\H_{\m}^i(M)$, $i< \dim M$, have finite length. 
 
Given the rigidity of Koszul complexes, the condition is 	equivalent to saying that $\H_1(\xx;M)$ has
finite length. 
These sequences are also called  {\em filter regular} systems of parameters (see
\cite{Schenzel}), but the main source of amenable partial systems of parameters is found in a property of $d$-sequences. 
Let us recall briefly this notion which we pair  to  a related type of sequence.
 They are
  extensions of regular sequences,  the notion of  {\em
  $d$-sequence} \index{d-sequence} invented by Huneke (\cite{Hu1a}),
  and of a {\em proper sequence} (\cite{HSV3II}). They play natural
  roles in the theory of the {\em approximation complexes}
  (see \cite{HSV3II}).
We now collect appropriate properties of these
sequences.
 
  \begin{Definition} {\rm Let $R$ be a commutative ring and $M$ an $R$-module. A sequence $\xx=x_1,\ldots, x_n$ of $R$-elements is called a {\em $d$-sequence} relative to $M$,
if \[ (x_1, \ldots, x_{i-1})M:_M x_{i}x_j = (x_1, \ldots, x_{i-1})M:_M x_j  \ \ \mbox{for all} \  \ 1 \le i \le j \le n.\]
A sequence $\xx=x_1,\ldots, x_n$ of $R$-elements is called a {\em strong $d$-sequence} relative to $M$, 
if for every sequence $r_1, \ldots, r_n$ of positive integers the
sequence $x_1^{r_1}, \ldots, x_n^{r_n} $ is a $d$-sequence relative to $M$.  We say that $\xx=x_1,\ldots, x_n$ is a {\it $d^+$-sequence} relative to $M$,  if it is a strong $d$-sequence relative to $M$ in any order.
}\end{Definition}

\begin{Definition} {\rm Let $R$ be a commutative ring and $M$ an $R$-module.  A sequence $\xx= x_1,  \ldots, x_n$  of
$R$-elements is a  {\em proper sequence} relative to $M$, if
\[ x_{i+1}\H_j(x_1, \ldots, x_i ; M)=0 \ \ 
\mbox{for all} \ \ 1 \le i < n\ \ \mbox{and}\ \  j>0,\]
where $\H_j(x_1, \ldots, x_i ;M)$ is the Koszul homology of $M$ associated to 
$x_1, \ldots, x_i $. 
}\end{Definition}

\begin{Proposition}\label{amproperseq}
Let $R$ be a Noetherian ring and 
   $\xx = x_1, \ldots, x_n$  a sequence in $R$.
\begin{enumerate}
\item[{\rm (1)}] If $\xx$ is a d-sequence relative to $M$, then $\xx$ is a proper sequence relative to $M$.

\item[{\rm (2)}] If $\xx$ is a proper sequence relative to $M$, then \[
x_{k}\H_j(x_1,  \ldots, x_i ; M)=0\ \   \mbox{for all} \ \ 1 \le i <n,  \ j>0,  \  \mbox{and}\ \ k>i.\]

 \item[{\rm (3)}] Suppose that $R$ is a local
ring and $M$ a finite $R$-module of dimension $n>0$. If $\xx$ is  a system of parameters of $M$ that is also a proper sequence relative to $M$, then $\H_j(x_1,  \ldots, x_i ;M)$ has finite length for all $1 \le i \le n$ and $j > 0$.
\end{enumerate}
\end{Proposition}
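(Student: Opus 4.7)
The three assertions build on one another, and the real content is concentrated in (1); parts (2) and (3) are formal consequences.

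\textbf{Part (1).} This is the classical implication that a $d$-sequence is a proper sequence, due to Huneke and refined in Herzog--Simis--Vasconcelos. The plan is to induct on $i$, exploiting that every initial segment of a $d$-sequence on $M$ is itself a $d$-sequence on $M$. The base case $x_{i+1}\H_1(x_1,\ldots,x_i;M)=0$ carries the genuine content: given a Koszul $1$-cycle $(m_1,\ldots,m_i)$ with $\sum_k m_kx_k=0$, the defining colon relations $(x_1,\ldots,x_{k-1})M:_M x_kx_l=(x_1,\ldots,x_{k-1})M:_M x_l$ (for $k\le l$) can be invoked iteratively, peeling off the top coordinate at each step, to exhibit $x_{i+1}(m_1,\ldots,m_i)$ as a combination of trivial Koszul boundaries $x_ae_b-x_be_a$. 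Higher $j$ are then extracted from the long exact sequence attached to the short exact sequence $K_\bullet(x_1,\ldots,x_i;M)\hookrightarrow K_\bullet(x_1,\ldots,x_{i+1};M)\twoheadrightarrow K_\bullet(x_1,\ldots,x_i;M)[-1]$ combined with an inductive use of the lower-degree annihilations.

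\textbf{Part (2).} The proper sequence hypothesis $x_{s+1}\H_j(x_1,\ldots,x_s;M)=0$ combined with the long exact sequence
\[ \cdots\to\H_j(x_1,\ldots,x_s;M)\xrightarrow{\pm x_{s+1}}\H_j(x_1,\ldots,x_s;M)\to\H_j(x_1,\ldots,x_{s+1};M)\to\H_{j-1}(x_1,\ldots,x_s;M)\to\cdots \]
(coming from viewing $K_\bullet(x_1,\ldots,x_{s+1};M)$ as the mapping cone of multiplication by $x_{s+1}$ on $K_\bullet(x_1,\ldots,x_s;M)$) forces the leftmost arrow to vanish and hence yields an embedding $\H_j(x_1,\ldots,x_s;M)\hookrightarrow\H_j(x_1,\ldots,x_{s+1};M)$ for every $s$ and every $j>0$. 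Composing these for $s=i,i+1,\ldots,k-1$ produces an injection $\H_j(x_1,\ldots,x_i;M)\hookrightarrow\H_j(x_1,\ldots,x_{k-1};M)$, and the proper sequence condition applied to the length-$(k-1)$ initial segment gives $x_k\H_j(x_1,\ldots,x_{k-1};M)=0$, which transports to $x_k\H_j(x_1,\ldots,x_i;M)=0$.

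\textbf{Part (3) and the main obstacle.} It suffices to produce an $\m$-primary annihilator of $\H_j(x_1,\ldots,x_i;M)$. Koszul homology is always killed by its defining sequence, so $(x_1,\ldots,x_i)\H_j(x_1,\ldots,x_i;M)=0$; part (2) delivers $(x_{i+1},\ldots,x_n)\H_j(x_1,\ldots,x_i;M)=0$. As a subquotient of a direct sum of copies of $M$, the module $\H_j(x_1,\ldots,x_i;M)$ is also annihilated by $\ann_R(M)$. Because $\xx$ is a system of parameters of $M$, the ideal $(\xx)+\ann_R(M)$ is $\m$-primary; hence $\H_j(x_1,\ldots,x_i;M)$ is a finitely generated module over the Artinian quotient $R/((\xx)+\ann_R(M))$ and therefore has finite length. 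The genuine obstacle of the whole statement is concentrated in part (1), where the full slate of $d$-sequence colon identities is needed; once (1) is in hand, (2) becomes a formal mapping-cone manipulation and (3) an annihilator count.
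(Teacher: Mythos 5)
The paper never proves this proposition: it is stated as a package of known facts, with the references to Huneke and to Herzog--Simis--Vasconcelos given immediately above, so the only comparison available is with the literature. Your parts (2) and (3) are correct and essentially complete: properness makes multiplication by $x_{s+1}$ vanish on $\H_j(x_1,\ldots,x_s;M)$, so the mapping-cone long exact sequence does give $R$-linear injections $\H_j(x_1,\ldots,x_s;M)\hookrightarrow\H_j(x_1,\ldots,x_{s+1};M)$ for $j>0$, and composing them transports the annihilation by $x_k$ back to the shorter segment; the observation that $(\xx)+\mathrm{Ann}_R(M)$ is $\m$-primary then settles (3).

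Part (1) is where all the content sits, and your sketch has two genuine gaps there. First, the degree-one peeling does not close as described: using $(x_1,\ldots,x_{i-1})M:_Mx_ix_{i+1}=(x_1,\ldots,x_{i-1})M:_Mx_{i+1}$ you may write $x_{i+1}m_i=\sum_{k<i}x_kn_k$ and subtract the corresponding Koszul boundary, but the residual cycle has coefficients $x_{i+1}m_k+x_in_k$, which is not $x_{i+1}$ times a cycle for the shorter sequence, so your induction hypothesis does not apply to it. Already for $i=2$ one is left with a cycle $(w,0)$ where $w=x_3m_1-x_2n$ lies in $(0:_Mx_1)\cap(x_2,x_3)M$, and the only way to conclude that $w=0$ is Huneke's relative-regular-sequence property $\left[(x_1,\ldots,x_s)M:_Mx_{s+1}\right]\cap(\xx)M=(x_1,\ldots,x_s)M$ --- a separate theorem about $d$-sequences, proved by its own induction, which your argument never invokes and which is the real crux. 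Second, the proposed passage to higher $j$ through the long exact sequence cannot work as stated: what properness of the shorter segments buys you is a short exact sequence $0\to\H_j(x_1,\ldots,x_i;M)\to\H_j(x_1,\ldots,x_{i+1};M)\to\H_{j-1}(x_1,\ldots,x_i;M)\to 0$ for $j\ge 2$, and knowing that $x_k$ kills both ends of an extension only yields $x_k^2\H_j(x_1,\ldots,x_{i+1};M)=0$, since the extension need not split; moreover the inductive hypothesis at stage $i$ concerns $x_{i+1}$, not the $x_k$ with $k>i+1$ that you need, so the induction is circular with part (2). The standard proofs handle all $j$ simultaneously at the level of cycles, again via the relative-regular-sequence property. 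Since the result is classical, citing it --- as the paper in effect does --- is the cleanest repair; otherwise part (1) needs the missing colon-intersection lemma and a cycle-level argument for $j\ge 2$.
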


We make use of Proposition \ref{amproperseq} (3) to build amenable partial systems of parameters of $M$. 

\medskip

\subsubsection*{Ubiquity}

 The following assertion is based on \cite[Proposition 2.7]{HSV3II}. That construction applied only to
$M=R$ and had a missing sentence. We use the proof to build a system of parameters that is a $d$-sequence whose first $r$
elements are given.

\begin{Proposition} \label{ubiquityth}  Let $(R, \m)$ be a Noetherian local ring and $M$ a finite $R$-module of dimension $d>0$. Let $I$ be an $\fkm$-primary ideal of $R$. 
Then there exists a system of parameters of $M$ which is contained in $I$ and a $d$-sequence relative to $M$.
\end{Proposition}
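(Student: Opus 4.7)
I would proceed by induction on $d = \dim M$, building the system of parameters one element at a time. The two tools at each stage are prime avoidance --- available because $I$ is $\m$-primary, hence contained in no prime strictly smaller than $\m$ --- and the power-raising trick of replacing an element by a high power to stabilize an ascending colon chain.

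\smallskip

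\noindent\textbf{Base case $d = 1$.} Since $I \not\subseteq \p$ for each $\p \in \Ass(M) \setminus \{\m\}$, prime avoidance produces $x_{1} \in I$ avoiding every non-maximal associated prime of $M$. A standard localization argument shows $0 :_{M} x_{1}$ is then supported only at $\m$, hence has finite length, so the chain $\{0 :_{M} x_{1}^{k}\}_{k\ge 1}$ stabilizes at some $n$. Replacing $x_{1}$ by $x_{1}^{n}$ (still inside $I$ and still a parameter, since $\sqrt{(x_{1}^{n})} = \sqrt{(x_{1})}$) yields $0 :_{M} x_{1}^{2} = 0 :_{M} x_{1}$, the one non-trivial $d$-sequence condition.

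\smallskip

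\noindent\textbf{Inductive step.} Suppose $x_{1}, \ldots, x_{i-1} \in I$ have been built, forming a partial system of parameters of $M$ and a $d$-sequence relative to $M$, with $i - 1 < d$. Write $J_{k} = (x_{1}, \ldots, x_{k})$ and $N = M/J_{i-1}M$; then $\dim N \ge 1$. The new element $x_{i} \in I$ must satisfy three sorts of conditions: (a) the parameter condition $\dim N/x_{i}N = \dim N - 1$; (b) the cross conditions $J_{k-1}M :_{M} x_{k}x_{i} = J_{k-1}M :_{M} x_{i}$ for $1 \le k \le i-1$; and (c) the symmetric condition $J_{i-1}M :_{M} x_{i}^{2} = J_{i-1}M :_{M} x_{i}$. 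Condition (a) forbids $x_{i}$ from lying in any associated prime of $N$ of maximal dimension, all of which are strictly contained in $\m$. Condition (b), rephrased, asserts that $x_{k}$ remains a non-zero-divisor on $x_{i}(M/J_{k-1}M)$ for each $k < i$; the $d$-sequence hypothesis already in force on $x_{1}, \ldots, x_{i-1}$ implies that the bad locus of this condition is controlled by a finite set of non-maximal associated primes attached to the modules $M/J_{k-1}M$. I would collect all these finitely many non-maximal primes into one set $\Sigma$ and apply prime avoidance inside $I$ to pick $x_{i} \in I \setminus \bigcup_{\p \in \Sigma} \p$; finally, replace $x_{i}$ by a sufficiently high power $x_{i}^{n}$ to stabilize $\{J_{i-1}M :_{M} x_{i}^{k}\}_{k \ge 1}$, securing condition (c). Raising to a power preserves (a) and (b) and keeps $x_{i}$ inside $I$.

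\smallskip

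\noindent\textbf{Main obstacle.} The heart of the argument --- and the place where the authors' reference to a ``missing sentence'' in \cite{HSV3II} seems most relevant --- is the simultaneous handling of the cross conditions in (b). One has to identify the correct finite set of bad primes controlling all $i-1$ cross conditions at once, and verify that this set consists entirely of primes properly contained in $\m$, so that prime avoidance can legitimately be applied inside the $\m$-primary ideal $I$. I expect this bookkeeping, rather than the base case or the power trick, to be the technical core of the proof.
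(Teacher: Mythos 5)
There is a genuine gap at exactly the spot you flag as the main obstacle, and it is not mere bookkeeping: the cross conditions (b) \emph{cannot} be secured by prime avoidance. Fix $k<i$, set $\overline{M}=M/(x_1,\ldots,x_{k-1})M$ and $W_k=(0):_{\overline{M}}x_k$. Condition (b) for the pair $(k,i)$ is equivalent to $x_i\overline{M}\cap W_k=(0)$; since $W_k$ is an $R$-submodule of $\overline{M}$ we have $x_iW_k\subseteq x_i\overline{M}\cap W_k$, so (b) already \emph{forces} $x_iW_k=(0)$, i.e. $x_i$ must lie in the transporter ideal $(x_1,\ldots,x_{k-1})M:_R[(x_1,\ldots,x_{k-1})M:_Mx_k]$. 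This is a proper $\m$-primary ideal whenever $W_k\neq(0)$ (the generic situation: $W_k$ has finite length because $x_k$ was chosen off the non-maximal associated primes of $\overline{M}$, but it need not vanish), and no element chosen merely to avoid finitely many non-maximal primes will land in a prescribed $\m$-primary ideal. So the bad locus of (b) is not a finite union of primes, and "collect the bad primes into $\Sigma$ and avoid them" is the wrong mechanism.

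The paper's proof supplies precisely the missing ingredient: the new element $y_{s+1}$ is taken in $I\cap J_1\cap\cdots\cap J_s$, where $J_i=(x_1,\ldots,x_{i-1})M:_R[(x_1,\ldots,x_{i-1})M:_Mx_i]$ are the transporters above. These are $\m$-primary (being annihilators of finite-length modules), so they do not obstruct the prime avoidance you genuinely need for condition (a). The cross condition is then \emph{deduced from} the symmetric one: if $x_ix_{s+1}u\in(x_1,\ldots,x_{i-1})M$, then $x_{s+1}u$ lies in the colon module, so $x_{s+1}^2u\in(x_1,\ldots,x_{i-1})M$ by the transporter membership, and the power trick gives $u\in(x_1,\ldots,x_{i-1})M:_Mx_{s+1}^2=(x_1,\ldots,x_{i-1})M:_Mx_{s+1}$. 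Note that this requires stabilizing the chains $(x_1,\ldots,x_j)M:_My_{s+1}^k$ for \emph{every} $0\le j\le s$ simultaneously, not just the single chain modulo $(x_1,\ldots,x_{i-1})M$ as in your step (c). (One could avoid naming the transporters by observing that a sufficiently high power of $y\in I\subseteq\m$ automatically annihilates the finite-length modules $W_k$; but then the power-raising is doing the essential work for (b), contrary to your assertion that it merely "preserves" (b), and the multi-chain stabilization is still needed to close the argument.)
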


\begin{proof} For an $R$-module $C$ we set $z(C)= \bigcup_{P \in \Ass_{R}C \setminus\{\fkm\}}P$. Let $y_{1} \in I \setminus z(M)$ and pick $m$ large enough so that $(0):_{M} y_1^m = (0):_{M} y_1^{m+1}$. Let $x_{1}=y_{1}^{m}$ and $J_{1}= (0)  :_{R} ((0):_{M}x_{1})$. For $1 \le s \le d-1$ we define the following recursively. Let 
\[ y_{s+1} \in (I \cap J_{1} \cap \ldots \cap J_{s}) \setminus (z(M) \cup z(M/x_1M) \cup \ldots \cup z(M/(x_1, \ldots, x_s)M) ),\]
where $J_{i}= (x_{1}, \ldots, x_{i-1})M :_{R} ( (x_{1}, \ldots, x_{i-1})M :_{M} x_{i} )$  for each $1 \le i \le s$.
Pick $m$ large enough so that 
\[ (x_{1}, \ldots, x_{i})M :_{M} y_{s+1}^{m} = (x_{1}, \ldots, x_{i})M :_{M} y_{s+1}^{m+1} \quad \mbox{\rm for all}\;\; 0 \le i \le  s.\]
We set $x_{s+1}= y_{s+1}^{m}$. 
Then since the elements $x_{1}, \ldots, x_{d}$ as chosen generate an ideal of height $d$, it is enough to prove that every subsequence $x_{1}, \ldots, x_{s}$ is a $d$-sequence relative to $M$ for all $1 \le s \le d$.  We use induction on $s$. Let $s=1$. Then by definition, $(0):_{M} x_{1} = (0):_{M} x_{1}^{2}$, which means that $x_{1}$ is a $d$-sequence relative to $M$. Suppose that $x_{1}, \ldots, x_{s}$ is a $d$-sequence relative to $M$. In order to prove that $x_{1}, \ldots, x_{s}, x_{s+1}$ is a $d$-sequence relative to $M$, it is enough to show   that for all $1 \le i \le s+1$
\[ (x_{1}, \ldots, x_{i-1})M :_{M} x_{i} x_{s+1} \subseteq (x_{1}, \ldots, x_{i-1})M :_{M} x_{s+1}.\]
Let $u \in M$ such that $(x_{i}x_{s+1}) u \in (x_{1}, \ldots, x_{i-1})M$. Then $ x_{s+1} u \in ( (x_{1}, \ldots, x_{i-1})M :_{M} x_{i})$. 
Since $x_{s+1} \in J_{i}$ for all $1 \le i \le  s$, we get
\[ x_{s+1}( x_{s+1}u) \in (x_{1}, \ldots, x_{i-1})M.\]
Hence
\[ u \in (x_{1}, \ldots, x_{i-1})M :_{M} x_{s+1}^{2}  =  (x_{1}, \ldots, x_{i-1})M :_{M} x_{s+1} \] as asserted.  \end{proof}

\begin{Corollary} Let $R$ be a Noetherian local ring and $M$ a finite $R$-module of dimension $d>0$. Then for each $0 \le r \leq d$ there exists an amenable partial system of parameters of $M$ of length $r$ that is a $d$-sequence relative to $M$.
\end{Corollary}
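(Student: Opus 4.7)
The plan is to obtain the desired partial system of parameters as an initial segment of a full system of parameters built by the preceding proposition. More precisely, I would apply Proposition~\ref{ubiquityth} with $I=\m$ to produce a full system of parameters $x_1,\ldots,x_d$ of $M$ that is a $d$-sequence relative to $M$. Setting $\xx=x_1,\ldots,x_r$, the case $r=0$ is trivial (the empty sequence is vacuously a $d$-sequence and amenable), so I assume $r\ge 1$.

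Next I would verify the two claimed properties of $\xx$. For the $d$-sequence property relative to $M$, this is immediate: the defining equalities
\[(x_1,\ldots,x_{i-1})M:_M x_ix_j=(x_1,\ldots,x_{i-1})M:_M x_j\]
for $1\le i\le j\le r$ are a subset of those holding for the full $d$-sequence $x_1,\ldots,x_d$. For the statement that $\xx$ is a partial system of parameters of $M$, since $x_1,\ldots,x_d$ is a full system of parameters, truncating to the first $r$ elements yields $\dim M/(\xx)M=d-r$, as required.

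For amenability, I would invoke Proposition~\ref{amproperseq}. By part (1), the full $d$-sequence $x_1,\ldots,x_d$ is a proper sequence relative to $M$, and it is a system of parameters of $M$ in the sense of part (3). Hence part (3) gives that $\H_j(x_1,\ldots,x_i;M)$ has finite length for all $1\le i\le d$ and all $j>0$. Specializing to $i=r$ yields that $\H_j(\xx;M)$ has finite length for every $j>0$, which is precisely the definition of $\xx$ being amenable to $M$.

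There is no real obstacle here; the work is entirely done by Proposition~\ref{ubiquityth} and Proposition~\ref{amproperseq}(3). The only subtlety worth flagging is that amenability of an initial segment $\xx=x_1,\ldots,x_r$ does \emph{not} follow directly from $\xx$ being a $d$-sequence in isolation (since Proposition~\ref{amproperseq}(3) needs a full system of parameters as input); the argument must instead view $\xx$ as a truncation of a full system of parameters that is simultaneously a $d$-sequence, and this is exactly what the ubiquity theorem provides.
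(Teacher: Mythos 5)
Your proof is correct and follows exactly the paper's route: apply Proposition~\ref{ubiquityth} (with $I=\m$) to get a full system of parameters that is a $d$-sequence relative to $M$, then truncate and invoke Proposition~\ref{amproperseq}~(3) for amenability of the initial segment. The paper states this in one sentence; your write-up simply makes explicit the (correct) observations that truncation preserves the $d$-sequence property and that part~(3) is applied to the full system before specializing to length $r$.
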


\begin{proof} By Proposition \ref{ubiquityth} there exists a system $x_{1}, \ldots, x_{d}$ of parameters  of $M$ that is a $d$-sequence relative to $M$ and by Proposition \ref{amproperseq} (3)
 the partial system $x_{1}, \ldots, x_{r}$ is  an amenable $d$-sequence relative to $M$.
\end{proof}

\begin{Corollary}
Let $(R, \m)$ be a Noetherian local ring and $M$ a finite $R$-module of dimension $d>0$.
Let $\xx= x_{1}, \ldots, x_{r} $ be an amenable partial system of parameters of $M$ that is  a $d$-sequence relative to $M$. Then the sequence $\xx$ can be extended to a full system of parameters of $M$ that is a $d$-sequence 
relative to $M$.
\end{Corollary}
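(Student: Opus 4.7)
The plan is to iterate the inductive construction in the proof of Proposition~\ref{ubiquityth}, taking the given $x_1, \ldots, x_r$ as the fixed initial segment. For $1 \le i \le r$ set
\[ J_i = (x_1, \ldots, x_{i-1})M :_{R} \bigl((x_1, \ldots, x_{i-1})M :_{M} x_i\bigr); \]
the $d$-sequence hypothesis gives $x_j \in J_i$ whenever $i \le j \le r$. Proceeding by induction on $s$, suppose $x_1, \ldots, x_s$ (with $r \le s < d$) has been built as an amenable partial system of parameters of $M$ that is a $d$-sequence relative to $M$. I would choose
\[ y_{s+1} \in (\m \cap J_1 \cap \cdots \cap J_s) \setminus \bigcup_{i=0}^{s} z(M/(x_1, \ldots, x_i)M), \]
pick $m \gg 0$ so that $(x_1, \ldots, x_i)M :_{M} y_{s+1}^m = (x_1, \ldots, x_i)M :_{M} y_{s+1}^{m+1}$ for all $0 \le i \le s$, and set $x_{s+1} = y_{s+1}^m$. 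The verification that $x_1, \ldots, x_{s+1}$ is again a $d$-sequence relative to $M$ is word-for-word the induction step of Proposition~\ref{ubiquityth}, and the avoidance condition on $y_{s+1}$ ensures that $x_{s+1}$ is a parameter on $M/(x_1, \ldots, x_s)M$. After $d - r$ steps one obtains the required full system of parameters.

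The one genuinely delicate point is the existence of $y_{s+1}$: one needs $\m \cap J_1 \cap \cdots \cap J_s$ not to be contained in any of the finitely many primes $P \ne \m$ lying in some $\Ass_{R}(M/(x_1, \ldots, x_i)M)$ with $0 \le i \le s$, so that standard prime avoidance applies. This is where the amenability hypothesis enters. For each $1 \le i \le s$ the torsion $N_i := \bigl((x_1, \ldots, x_{i-1})M :_{M} x_i\bigr)/(x_1, \ldots, x_{i-1})M$ is a subquotient of $\H_1(x_1, \ldots, x_i; M)$, which has finite length once $x_1, \ldots, x_s$ is amenable. Hence each $J_i = \Ann_{R}(N_i)$ is $\m$-primary, so is $J_1 \cap \cdots \cap J_s$, and the required element $y_{s+1}$ exists.

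It remains to propagate amenability across the induction. The Koszul long exact sequence presents $\H_1(x_1, \ldots, x_{s+1}; M)$ as an extension of $\H_1(x_1, \ldots, x_s; M)/x_{s+1}\H_1(x_1, \ldots, x_s; M)$ by the $x_{s+1}$-torsion of $M/(x_1, \ldots, x_s)M$. The first factor is of finite length by inductive amenability; the second is too, because the choice $y_{s+1} \notin z(M/(x_1, \ldots, x_s)M)$ forces the $x_{s+1}$-torsion of $M/(x_1, \ldots, x_s)M$ to lie in its $\m$-torsion submodule, an object of finite length. Hence $x_1, \ldots, x_{s+1}$ is again amenable and the induction continues.
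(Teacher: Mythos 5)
Your argument is correct and is essentially the paper's own proof, which is the one-line instruction to rerun the construction of Proposition~\ref{ubiquityth} with $I=\m$, taking $\xx$ as the first $r$ elements; you have usefully made explicit the point the paper leaves implicit, namely that amenability forces each $J_i$ ($1\le i\le r$) built from the \emph{given} elements to be $\m$-primary (or all of $R$), so that prime avoidance still produces $y_{s+1}$. The only tiny gloss is that finite length of $\H_1(x_1,\ldots,x_i;M)$ for $i<r$ is not literally the definition of amenability of $\xx$, but it does follow because the $d$-sequence (hence proper sequence) property makes the multiplication by $x_{i+1}$ on $\H_1(x_1,\ldots,x_i;M)$ zero, giving an embedding $\H_1(x_1,\ldots,x_i;M)\hookrightarrow \H_1(\xx;M)$.
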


\begin{proof} Apply the construction of Proposition \ref{ubiquityth} to $I=\fkm$, choosing
$\xx$ for the first $r$ elements of the $d$-sequence.
\end{proof}

\begin{Question}{\rm
Let $R$ be a Noetherian local ring and $M$ a finite  $R$-module. Given an amenable partial system $\xx= x_{1}, \ldots, x_{r} $  of parameters of $M$, is there 
 an amenable partial system $y_{1}, \ldots, y_{r} $  of parameters of $M$ in the ideal $(\xx)$ that is a $d$-sequence relative to $M$?
}\end{Question}

\medskip

\subsubsection*{Signature of \  $\mathbf{j}_{1}(\xx;M)$}  
For an $R$-submodule $N$ of $M$, we let ${ N:_M\left<\m\right> = \bigcup_{\ell > 0}[N:_M\fkm^\ell]}$. Hence $\rmH_\fkm^0(M/N) = [N:_M \left<\m\right>]/N$.  

\begin{Theorem}\label{14} Let $(R,\m)$ be a Noetherian local ring and $M$ a finite $R$-module of dimension $d > 0$. Let $I=(\xx)=(x_1, \ldots, x_r)$ be an ideal of $R$ generated by an amenable partial system of parameters of $M$ that is a $d$-sequence relative to $M$, where $0 \leq r \leq d$.
Let $\rmG = \gr_I(R)$. Then the following assertions hold true.
\begin{enumerate}
\item[{\rm (1)}] $I^nM \cap [I^{n+1}M :_M \left<\m\right>] = I^n[IM:_M \left<\m\right>]$ for all $n \geq 0$.
\item[{\rm (2)}]  The $\mathbf{j}$-transform $\rmH_{I}(M)$ is generated by its homogeneous component of degree $0$, i.e., 
\[ \rmH_{I} (M) = \rmG{\cdot}[\rmH_{I} (M)]_0.\]
\end{enumerate}
\end{Theorem}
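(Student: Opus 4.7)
The plan is to observe first that (2) is a direct reformulation of (1). The degree-$n$ component of the $\mathbf{j}$-transform $\rmH_I(M) = \rmH_\m^0(\gr_I(M))$ is
\[
 [\rmH_I(M)]_n = \bigl[I^nM \cap (I^{n+1}M :_M \left<\m\right>)\bigr]/I^{n+1}M,
\]
while $[\rmH_I(M)]_0 = [IM :_M \left<\m\right>]/IM$. Under the natural action of $\rmG = \gr_I(R)$, one has $\rmG_n \cdot [\rmH_I(M)]_0 = I^n[IM :_M \left<\m\right>]/I^{n+1}M$ inside $[\rmH_I(M)]_n$. Thus the equality asserted in (1) is precisely $[\rmH_I(M)]_n = \rmG_n \cdot [\rmH_I(M)]_0$ for every $n$, which is the statement of (2). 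So it suffices to prove (1).

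For (1), one inclusion is elementary: if $u \in IM :_M \left<\m\right>$, then $\m^\ell u \subseteq IM$ for some $\ell$, so $\m^\ell(I^n u) \subseteq I^{n+1}M$, placing $I^n u$ in the left-hand side. For the reverse inclusion, the plan is to exploit the approximation complex $\mathcal{M}(\xx;M)$ of Herzog-Simis-Vasconcelos (cf.\ the forthcoming Theorem~\ref{jdseqcx}). Because $\xx$ is a $d$-sequence relative to $M$, hence a proper sequence by Proposition~\ref{amproperseq}, the complex
\[
 0 \to \H_r(\xx;M) \otimes_R S[-r] \to \cdots \to \H_1(\xx;M) \otimes_R S[-1] \to (M/IM) \otimes_R S \to \gr_I(M) \to 0
\]
is exact, where $S = R[\TT_1,\ldots,\TT_r]$.

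Now take $v$ in the left-hand side of (1) and lift its class $\bar v \in [\gr_I(M)]_n$ to $\tilde v \in (M/IM) \otimes_R [S]_n$. Then $\m^\ell \tilde v$ lies in the kernel of the surjection onto $[\gr_I(M)]_n$, hence in the image of $\H_1(\xx;M) \otimes_R [S]_{n-1}$. By amenability, $\H_1(\xx;M)$ has finite length, so this image has finite length and is annihilated by some $\m^L$. It follows that $\m^{\ell+L}\tilde v = 0$; since $[S]_n$ is a free $R$-module, the functor $\rmH_\m^0$ commutes with $-\otimes_R [S]_n$, giving
\[
 \tilde v \in \rmH_\m^0\bigl((M/IM) \otimes_R [S]_n\bigr) = \bigl([IM :_M \left<\m\right>]/IM\bigr) \otimes_R [S]_n.
\]
Pushing $\tilde v$ forward to $\gr_I(M)$ shows $v \in I^n[IM :_M \left<\m\right>] + I^{n+1}M$, and the elementary inclusion $I^{n+1}M \subseteq I^n \cdot IM \subseteq I^n[IM :_M \left<\m\right>]$ completes the proof.

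The main obstacle is not the diagram chase itself but securing the acyclicity of $\mathcal{M}(\xx;M)$ over a general module $M$ — this rests on the fact that $\xx$ is a $d$-sequence (in particular a proper sequence) relative to $M$ and is the substance of \cite{HSV3II} and Theorem~\ref{jdseqcx}. Once this acyclicity, together with the standard commutation of $\rmH_\m^0$ with tensoring against the free modules $[S]_n$, is in hand, the argument above propagates the finite-length conclusion from the Koszul homologies to $\tilde v$ and finishes the theorem.
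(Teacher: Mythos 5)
Your argument is correct, but it proves the theorem by a genuinely different route from the paper. The paper's proof of (1) is an elementary double induction on $n$ and $r$: it passes to $\overline{M}=M/x_1M$ to handle the induction on $r$, reduces to the case $f\in x_1M\cap I^nM$, invokes Huneke's identity $x_1M\cap I^nM=x_1I^{n-1}M$ for $d$-sequences, and then uses the finite length of $(0):_Mx_1=\rmH_r(\xx;M)$ (this is where amenability enters) to descend to the inductive hypothesis on $n$; statement (2) is then read off exactly as you do. You instead import the acyclicity of the approximation complex $\mathcal{M}(\xx;M)$ for $d$-sequences (the external input behind the paper's later Theorem~\ref{jdseqcx}) and use the finite length of $\rmH_1(\xx;M)$ to push the class of $v$ into $\rmH_\m^0\bigl((M/IM)\otimes [S]_n\bigr)$; this is in effect a hand-computed, degreewise version of the surjection $\rmH_\m^0(\rmH_0(\xx;M))\otimes S \twoheadrightarrow \rmH_\m^0(\gr_I(M))$ that the paper only establishes in Section 3. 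There is no circularity, since Theorem~\ref{jdseqcx} does not depend on Theorem~\ref{14}, but your proof leans on the heavier machinery of \cite{HSV3II}, whereas the paper's induction is self-contained apart from \cite[Proposition 2.2]{Hu1}. One small imprecision: the acyclicity of $\mathcal{M}(\xx;M)$ is guaranteed by the $d$-sequence hypothesis (\cite[Theorem 4.1]{HSV3II}); a proper sequence alone yields acyclicity of the $\mathcal{Z}$-complex, not of $\mathcal{M}$, so the clause ``hence a proper sequence'' does not by itself justify the exactness you need — but since $\xx$ is assumed to be a $d$-sequence this does not affect the validity of the argument.
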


\begin{proof} (1)
We have only to show $I^nM \cap [I^{n+1}M :_M  \left<\m\right> ] \subseteq I^n[IM:_M \left<\m\right> ]$. By induction, we may assume that $n, r > 0$ and the assertion holds true for $n -1$ and $r -1$. We set $\overline{M} = M/x_1M$ and consider the partial system $x_2, \ldots, x_r$ of parameters of $\overline{M}$. Let $ f \in I^{n+1}M :_M \left<\m\right> $ and let $\overline{f}$ denote the image of $f$ in $\overline{M}$. Then $\overline{f} \in I^n\overline{M} \cap [I^{n+1}\overline{M} :_{\overline{M}}\left<\m\right> ]$ and hence $f \in I^{n}[IM:_M  \left<\m\right>] + x_1M$ by the hypothesis on $r$. Therefore without loss of generality, we may assume that $f \in x_1M \cap I^nM$. Hence, because $x_1M \cap I^nM = x_1I^{n-1}M$ (as $\xx$ is a $d$-sequence relative to $M$; see \cite[Proposition 2.2]{Hu1}), we get  $f = x_1g$ for some $g \in I^{n-1}M$. Then for  $\ell \gg 0$, $x_1(\m^{\ell}g) \subseteq I^{n+1}M \cap x_1M ~= x_1I^nM$. Let $a \in \m^\ell$ and write $x_1 (a g) = x_1 h$ with $h \in I^nM$.  Then 
$a g - h \in [(0):_M x_1] = \H_{r}(\xx ; M)$. Since $\xx$ is amenable, by using the hypothesis of induction, we get
\[ g \in I^{n-1}M \cap [ I^{n}M :_{M} <\m> ] = I^{n-1}[ IM :_{M} <\m> ].\]
This means that $f = x_{1} g \in I^{n-1} [ IM :_{M} <\m> ]$ as asserted.

(2) It follows from $\rmH^0_\m(I^nM/I^{n+1}M) = \left\{I^n[IM:_M \m ]\right\}/I^{n+1}M$ for all $n \ge 0$ by (1).
\end{proof}

\begin{Corollary}\label{j1sig}
Let $(R,\m)$ be a Noetherian local ring and let $M$ be a finite $R$-module of dimension $d > 0$. Let $\xx = x_1, \ldots, x_r$ be an  amenable partial system of parameters of $M$ that is a $d$-sequence relative to $M$. Suppose that $\dim \rmH_{(\xx)}(M)=r$. Then $\rmj_{1}(\xx; M) \leq 0$.
\end{Corollary}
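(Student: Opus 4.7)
My plan is to piggyback on Theorem~\ref{14}(2) and reduce the claim to the Northcott-type non-positivity results already quoted in the introduction.

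First, I would invoke Theorem~\ref{14}(2) directly: under our hypotheses, the $\mathbf{j}$-transform $\H := \rmH_{(\xx)}(M)$ is generated by its degree-zero component as a graded module over $\rmG = \gr_{(\xx)}(R)$. Writing $\ff = f_1, \ldots, f_r$ for the initial forms of $x_1, \ldots, x_r$ in $\rmG$, generation in degree zero means
\[ [\H]_n \;=\; (\ff)^n \cdot [\H]_0 \qquad \text{for all } n \geq 0. \]
Consequently the $\mathbf{j}$-function $\psi^M_{(\xx)}(n) = \sum_{k \leq n}\lambda([\H]_k)$ coincides with the Hilbert--Samuel function of the $(\ff)$-adic filtration on $\H$, and comparing binomial expansions gives $\rmj_i(\xx;M) = \e_i(\ff,\H)$ for every $i$. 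In particular, the target inequality is rewritten as $\e_1(\ff,\H) \leq 0$.

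Second, the dimension hypothesis $\dim \H = r$ together with the fact that $(\ff)$ is generated by $r$ homogeneous elements makes $(\ff)$ a parameter ideal for $\H$ over $\rmG$, placing $\H$ in exactly the framework of \cite[Theorem 3.6]{MSV} and \cite[Corollary 2.3]{chern5}. Those results assert that for a finitely generated graded module that is generated in degree zero relative to a parameter ideal, the first Hilbert coefficient is non-positive. Applying them to $\H$ yields $\e_1(\ff,\H) \leq 0$, and hence $\rmj_1(\xx;M) \leq 0$.

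The one delicate point I anticipate is checking that the precise hypotheses of \cite{MSV} and \cite{chern5} are met verbatim --- specifically that $(\ff)$ behaves as a full parameter ideal on $\H$, with analytic spread equal to $r$. This should be immediate from the equality $\dim \H = r$ and the fact that the $r$ initial forms $\ff$ generate the irrelevant component of $\rmG$ modulo its zero-divisors on $\H$, but it is the only place where careful bookkeeping is required. Once that verification is complete, the result follows as the purely formal combination of Theorem~\ref{14}(2) with the cited non-positivity theorems.
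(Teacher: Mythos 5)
Your argument is correct and is essentially the paper's own proof: the paper likewise uses Theorem~\ref{14}(2) to see that $\rmH_{(\xx)}(M)$ equals its associated graded module with respect to the initial forms $\ff$, identifies $\rmj_1(\xx;M)$ with $\e_1(\ff,\rmH_{(\xx)}(M))$, and then cites \cite[Theorem 3.6]{MSV} and \cite[Corollary 2.3]{chern5} for the non-positivity. The bookkeeping point you flag (that $(\ff)$ is a parameter ideal for $\H$ when $\dim\H = r$) is exactly the role of the hypothesis $\dim \rmH_{(\xx)}(M) = r$, so nothing further is needed.
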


\begin{proof} Let $\H= \rmH_{(\xx)}(M)$ and let $\ff = f_1, \ldots, f_r$ denote the initial forms of $x_i's$ relative to $I$ in $\rmG= \gr_I(R)$. Then $\gr_{(\xx^*)}(\H) = \H$ by (\ref{14}), so that
$\rmj_1(\xx; M) = \e_1(\xx^*, \H) \leq 0$  by \cite[Theorem 3.6]{MSV} and \cite[Corollary 2.3]{chern5}. 
\end{proof}

To get $\rmH_I(M) = \rmG {\cdot}[\rmH_I(M)]_0$, one really needs the assumption that $\xx$ is a $d$-sequence relative to $M$ which is amenable. Let us explore an example. See Example \ref{6.12} to agree with the independence of the $d$-sequence property and amenability.

\begin{Example}
{\rm
Let $(S,\fkn)$ be a regular local ring of dimension $q+3~(q \ge 0)$. We write
$\fkn = (X,Y,Z, W_1, \ldots, W_q)$. Let $\fka = (X) \cap (Y^2, Z) \cap (X^2, Y,Z^2)=(X^2Z, XY^2, XYZ, XZ^2)$ and set $R = S/\fka$. Then $\dim R = q+2$. Let $x,y, z$ and $w_i$ denote, respectively, the images of $X,Y, Z$, and $W_i$ in $R$. We consider the partial system $z, w_1, \ldots, w_q$  of parameters of $R$. Let $I = (z, w_1, \ldots,w_q)$ and $\mathrm{G} = \operatorname{gr}_I(R)$. We set $\rmH = \rmH_\fkm^0(\rmG)$ and let $\fkm = \fkn / \fka$ denote the maximal ideal of $R$. We then have the following.

\begin{Lemma}\label{applemma} The following assertions hold true.
\begin{enumerate}
\item[$(1)$] $R/I$ is a Cohen-Macaulay ring and $0 \ne \overline{xz} \in \rmH_\fkm^0(I/I^2)$, where $\overline{xz}$ denotes the image of $xz$ in $I/I^2$. 
\item[$(2)$] $w_1, \ldots, w_q$ is a super-regular sequence with respect to $I$, that is the initial forms of $w_i$'s constitute a regular sequence in $\mathrm{G}$. 
\item[$(3)$] If $q = 0$, then $\rmH = \rmH_1$.
\item[$(4)$] $\rmH = \rmG{\cdot}\rmH_1$ and $\lambda(\rmH_1) = 1$. 
\end{enumerate}
\end{Lemma}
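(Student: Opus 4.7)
The plan is to leverage the product decomposition $R \cong A[\![w_1,\ldots,w_q]\!]$, where $A = k[\![X,Y,Z]\!]/\fka_0$ with $\fka_0 = (X^2Z, XY^2, XYZ, XZ^2)$, together with the ideal $J = (z)A$. Since $I = JR + (w_1,\ldots,w_q)R$ with the $w_i$'s acting as polynomial indeterminates over $A$, one obtains the graded isomorphism
\[
\gr_I(R) \;\cong\; \gr_J(A)[W_1,\ldots,W_q],
\]
where $W_i$ is the initial form of $w_i$. Assertion (2) is then immediate, since polynomial indeterminates form a regular sequence on any polynomial ring.

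For (1), reducing $\fka$ modulo $(Z, W_1,\ldots,W_q)$ gives $R/I \cong k[\![X,Y]\!]/(XY^2)$, and the primary decomposition $(XY^2) = (X) \cap (Y^2)$ shows $R/I$ has no embedded primes, hence is Cohen--Macaulay of dimension one. To see $\overline{xz} \ne 0$ in $\rmH^0_\fkm(I/I^2)$, I verify $\fkm\cdot xz \subseteq I^2$ generator-by-generator: the relations $x^2z = xyz = xz^2 = 0$ (immediate from $\fka_0$) handle $x,y,z$, and $w_i\cdot xz \in zI \subseteq I^2$ handles each $w_i$. The nonvanishing $xz \notin I^2$ follows from a degree count in $S$: the degree-two part of $I^2 S + \fka$ is spanned by $Z^2$, $ZW_i$, and $W_iW_j$, which does not contain $XZ$.

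For (3) and (4), the key observation is that $\fkm$ acts on $\gr_I(R)$ through $R/I = A/J$ and leaves the $W_i$ free, so local cohomology commutes with the polynomial extension:
\[
\rmH^0_\fkm(\gr_I(R)) \;=\; \rmH^0_\fkm(\gr_J(A))[W_1,\ldots,W_q].
\]
The core computations then take place in $A$. The colon ideals $(0):_A z = (x^2,xy,xz)$ and $(0):_A z^n = (x)$ for $n \ge 2$ follow respectively by lifting to $S$ and isolating the $XY^2$-coefficient, and by noting that $A/(x) \cong k[\![Y,Z]\!]$ is a domain in which $z$ is a non-zero-divisor (together with $xz^2 = 0$). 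These give the successive quotients
\[
A/J \cong k[\![X,Y]\!]/(XY^2), \qquad J/J^2 \cong k[\![X,Y]\!]/(X^2,XY), \qquad J^n/J^{n+1} \cong k[\![Y]\!] \; (n\ge 2).
\]
The first and last have positive depth, hence vanishing $\rmH^0_\fkm$; the middle piece has $\rmH^0_\fkm$ equal to $k\overline{X}$, which pulls back to $k\cdot\overline{xz}$ in $J/J^2$. Hence $\rmH^0_\fkm(\gr_J(A))$ is concentrated in degree one with length one, giving (3) when $q=0$ and, via the polynomial extension, $\lambda(\rmH_1)=1$ together with $\rmH = \rmG\cdot\overline{xz} = \rmG\cdot\rmH_1$ in general. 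The main obstacle is the careful monomial-support parsing of $\fka_0$ that drives the colon-ideal computations; the rest cascades from the polynomial decomposition of $\gr_I(R)$.
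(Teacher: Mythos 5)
Your computations all check out and they recover exactly the arithmetic facts the paper relies on: $\fka :_S Z = X(X,Y,Z)=(X^2,XY,XZ)$, $\fka :_S Z^n = (X)$ for $n \ge 2$, the resulting identifications $A/J \cong k[\![X,Y]\!]/(XY^2)$, $J/J^2 \cong k[\![X,Y]\!]/(X^2,XY)$, $J^n/J^{n+1} \cong k[\![Y]\!]$, and the degree count showing $XZ \notin (Z,W_1,\ldots,W_q)^2 + \fka$. But your organization is genuinely different from the paper's. The paper proves assertion (2) by hand, checking $(K+\fka)\cap(J^n+\fka)=\fka+KJ^{n-1}$, and then obtains (3) and (4) by induction on $q$ through the exact sequence $0 \to \rmH[-1] \overset{f}{\to} \rmH \to \rmH(\overline{R}) \to 0$; you instead collapse the whole induction into the single isomorphism $\gr_I(R) \cong \gr_J(A)[W_1,\ldots,W_q]$ and the observation that $\rmH_\fkm^0$ commutes with that polynomial extension. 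When it applies, your route is cleaner and hands you the Hilbert series $\ttt/(1-\ttt)^q$ of the subsequent Proposition at no extra cost.

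There is, however, one genuine gap at the very first step. The paper assumes only that $(S,\fkn)$ is a regular local ring of dimension $q+3$, so there need not be a coefficient field, and the decomposition $R \cong A[\![w_1,\ldots,w_q]\!]$ need not exist: it requires $S$ to be complete and equicharacteristic (Cohen's structure theorem), and one can write down complete mixed-characteristic $S$ with a regular system of parameters $X,Y,Z,W_1,\ldots,W_q$ for which no subring splitting off the $W_i$ contains $X,Y,Z$ (e.g.\ $X = p + W_1$ in $\Bbb Z_p[\![Y,Z,W_1]\!]$). Since (2), the transfer of the $\gr_J(A)$-computation to general $q$, and the identity $\rmH = \rmG{\cdot}\rmH_1$ all sit downstream of that isomorphism, your proof as written covers only the power-series case; to get the stated generality you would have to fall back on the paper's direct verification of super-regularity and its induction on $q$. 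A smaller point: even in the power-series case, the isomorphism $\gr_I(R) \cong \gr_J(A)[W_1,\ldots,W_q]$ is asserted rather than proved; it deserves a line such as $I^n = \left\{\sum_\alpha a_\alpha w^\alpha \mid a_\alpha \in J^{n-|\alpha|}\right\}$, from which the graded decomposition and the commutation of $\rmH_\fkm^0$ with the extension both follow.
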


\begin{proof} 
(1) The first assertion is clear, as $R/I = S/[(Z, W_1, \ldots, W\q) + (XY^2)]$. We set $K = (W_1, \ldots, W_q)$ and $J = (Z, W_1, \ldots,W_q)$.  Then $I/I^2 \cong J/[J^2 + (\fka \cap J)]$. Since
$J^2 + (\fka \cap J) = J^2 + (X^2Z, XYZ, XZ^2) + (XYW_i \mid 1 \le i \le q)$, we have $XZ \not\in J^2 + (\fka \cap J)$, while $\fkn{\cdot}XZ \subseteq J^2 + [\fka \cap J]$. 

(2) We may assume that $q > 0$. As $w_1$ is $R$-regular and $R/(w_1) = S/[(W_1) + \fka]$, we have by induction on $q$ that $w_1, \ldots, w_q$ is an $R$-regular sequence. We must show that $(w_1, \ldots, w_q) \cap I^n = (w_1, \ldots, w_q)I^{n-1}$ for all $n \ge 2$. Notice that
$$(K + \fka)\cap (J^n + \fka) = \fka + \left(K \cap [KJ^{n-1} + (Z^n) +\fka]\right)
= (\fka + KJ^{n-1})+K\cap [(Z^n) + \fka].$$ 
We then have $(K + \fka)\cap (J^n + \fka) = \fka + KJ^{n-1}$, since $K\cap [(Z^n) + \fka]= K{\cdot}[(Z^n) + \fka]$.

(3), (4)  We have $\fka :_S Z = X(X,Y,Z)$ and $\fka :_S Z^n = (X)$ for all $n \ge 2$, whence $(0):_R z = x(x,y,z)$ and $(0):_R z^n = (x)$. Therefore $(z^n)/(z^{n+1}) \cong R/(x,z) = S/(X,Z)$, so that $\rmH_\fkm^0((z^n)/(z^{n+1})) = (0)$ for $n \ge 2$, which shows that if $q = 0$, then $\rmH = \rmH_1$ with $\lambda(\rmH_1) = 1$ (remember that $(z)/(z^2) \cong S/(X\fkn + (Z))$, if $q = 0$). Suppose now that $q > 0$ and that the assertion holds true for $q - 1$. Let $\overline{R} = R/(w_1)$ and set $\rmG (\overline{R}) = \gr_{I\overline{R}} (\overline{R})$. We consider the exact sequence 
$$0 \to \rmG[-1] \overset{f}{\to} \rmG \to \rmG (\overline{R}) \to 0$$
of graded $\rmG$-modules, where $f$ denotes the initial form of $w_1$. Then applying $\rmH_\fkm^0(-)$, we get the exact sequence
$$(\sharp_1) \ \ \ 0 \to \rmH[-1] \overset{f}{\to} \rmH \to \rmH (\overline{R})$$
of graded $\rmG$-modules, where $\rmH (\overline{R}) = \rmH_\fkm^0(\rmG(\overline{R}))$. Then because $\rmH_0 = (0)$, we have an embedding $0 \to \rmH_1 \to [\rmH(\overline{R})]_1$, which must be surjective, as $\rmH_1 \ne (0)$ by assertion (1) and $\lambda([\rmH(\overline{R})]_1)= 1$ by the hypothesis of induction. Therefore, because $\rmH(\overline{R})=\rmG(\overline{R}){\cdot}[\rmH(\overline{R})]_1$ by the hypothesis of induction, exact sequence $(\sharp_1)$ is actually a short exact sequence
$$(\sharp_2)\ \ \ 0 \to \rmH[-1] \overset{f}{\to} \rmH \to \rmH(\overline{R}) \to 0,$$
so that $\rmH = \rmG{\cdot}\rmH_1 + f\rmH$, whence $\rmH = \rmG{\cdot}\rmH_1$ by Nakayama's lemma. The equality $\lambda(\rmH_1) = 1$ is now clear, since $\rmH_1 \cong [\rmH(\overline{R})]_1$.
\end{proof}

We set $\overline{G} = \rmG/\fkm \rmG$. Notice that $\rmH$ is a graded $\overline{\rmG}$-module, as $\fkm \rmH = (0)$ (Lemma \ref{applemma} (4)). Let $h=xz + I^2 \in \rmH_1$ and let $F$ denote the image of $z + I^2$ in $\overline{\rmG}$. With this notation our conclusion is the following.

}
\end{Example}

\begin{Proposition}
$[\![\rmH]\!] = \displaystyle\frac{\mathbf{t}}{(1 - \mathbf{t})^q}$ and $(\overline{\rmG}/F\overline{\rmG})[-1] \cong \rmH$ as a graded $\overline{\rmG}$-module.
\end{Proposition}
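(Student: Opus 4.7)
My plan is to treat the two statements in tandem, noting that the Hilbert-series formula is the numerical shadow of the module isomorphism. I first establish the Hilbert series by induction, then construct a natural surjection $(\overline{\rmG}/F\overline{\rmG})[-1] \twoheadrightarrow \rmH$, and finally upgrade the surjection to an isomorphism by matching Hilbert series via a trivial upper bound.

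For the Hilbert series, induct on $q$. The base case $q = 0$ is Lemma \ref{applemma}(3): $\rmH = \rmH_1$ with $\lambda(\rmH_1) = 1$, so $[\![\rmH]\!] = \mathbf{t}$. For $q \ge 1$, the ring $\overline{R} = R/(w_1)$ is of exactly the same form with $q - 1$ in place of $q$, so by induction $[\![\rmH(\overline{R})]\!] = \mathbf{t}/(1-\mathbf{t})^{q-1}$. Taking Hilbert series of the short exact sequence $(\sharp_2)$ from the proof of Lemma \ref{applemma} yields $(1-\mathbf{t}) [\![\rmH]\!] = [\![\rmH(\overline{R})]\!]$, whence $[\![\rmH]\!] = \mathbf{t}/(1-\mathbf{t})^q$.

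For the module isomorphism, Lemma \ref{applemma}(4) says $\rmH = \rmG \cdot \rmH_1$ with $\lambda(\rmH_1) = 1$, so $\rmH_1 \cong k$ is generated by $h$ with $\mathfrak{m} h = 0$; hence $\mathfrak{m}\rmH = \rmG \cdot \mathfrak{m} h = 0$, so $\rmH$ is a cyclic $\overline{\rmG}$-module with generator $h$ in degree $1$. This gives a graded surjection $\overline{\rmG}[-1] \twoheadrightarrow \rmH$ sending $1 \mapsto h$, and because $XZ^2 \in \mathfrak{a}$ one has $Fh = xz^2 + I^3 = 0$, so the surjection factors through $\varphi \colon (\overline{\rmG}/F\overline{\rmG})[-1] \twoheadrightarrow \rmH$. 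To see that $\varphi$ is an isomorphism I compare Hilbert series: since $\overline{\rmG} = \rmF(I)$ is generated as a $k$-algebra by the degree-$1$ classes of $z, w_1, \ldots, w_q$, the quotient $\overline{\rmG}/F\overline{\rmG}$ is a graded quotient of the polynomial ring $k[T_1, \ldots, T_q]$, so coefficient-wise $[\![\overline{\rmG}/F\overline{\rmG}]\!] \le 1/(1-\mathbf{t})^q$. Combining this with the previous step,
$$\frac{\mathbf{t}}{(1-\mathbf{t})^q} \;=\; [\![\rmH]\!] \;\le\; \mathbf{t} \cdot [\![\overline{\rmG}/F\overline{\rmG}]\!] \;\le\; \frac{\mathbf{t}}{(1-\mathbf{t})^q},$$
forcing equality throughout. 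A surjection of $\mathbb{N}$-graded modules with finite-dimensional components and identical Hilbert series has trivial kernel, so $\varphi$ is the desired isomorphism.

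The trap I expect most people to stumble into is the temptation to prove the stronger structural statement that $\overline{\rmG}$ itself equals the polynomial ring $k[T_0, T_1, \ldots, T_q]$, which would require an honest analysis of the relations in $\rmF(I)$ (ultimately verifying analytic independence of $z, w_1, \ldots, w_q$). The argument above bypasses that by using only the trivial upper bound on $[\![\overline{\rmG}/F\overline{\rmG}]\!]$ coming from generator counts, letting the Hilbert-series squeeze do the work.
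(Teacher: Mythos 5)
Your proof is correct and follows the paper's own route step for step: the same induction on $q$ via the exact sequence $(\sharp_2)$ for the Hilbert series, the same cyclic surjection $\overline{\rmG}[-1]\to\rmH$, $1\mapsto h$, which factors through $F\overline{\rmG}$ because $xz^2=0$ in $R$. The one place you genuinely improve on the paper is the last step: the paper concludes with the bare assertion that ``the Hilbert series of both sides are the same,'' which as written presupposes knowing $[\![\overline{\rmG}/F\overline{\rmG}]\!]=1/(1-\mathbf{t})^q$ (i.e.\ the analytic independence of $w_1,\dots,w_q$ modulo $F$), whereas your squeeze $\frac{\mathbf{t}}{(1-\mathbf{t})^q}=[\![\rmH]\!]\le\mathbf{t}\cdot[\![\overline{\rmG}/F\overline{\rmG}]\!]\le\frac{\mathbf{t}}{(1-\mathbf{t})^q}$, using only that $\overline{\rmG}/F\overline{\rmG}$ is a graded quotient of a polynomial ring on $q$ degree-one generators, supplies exactly the missing justification and yields the equality $[\![\overline{\rmG}/F\overline{\rmG}]\!]=1/(1-\mathbf{t})^q$ as a byproduct rather than an input.
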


\begin{proof} If $q = 0$, then $\rmH = \rmH_1$ and $\lambda(\rmH_1) = 1$, so that  the assertions are certainly true. Suppose that $q > 0$ and that our assertion holds true for $q-1$. Then thanks to exact sequence $(\sharp_2)$ in the proof of Lemma \ref{applemma},  $[\![\rmH]\!] =\frac{[\![\rmH(\overline{R})]\!] }{1-\mathbf{t}}$, whence $[\![\rmH]\!] = \frac{\mathbf{t}}{(1 - \mathbf{t})^q}$, as $[\![\rmH(\overline{R})]\!] = \frac{\mathbf{t}}{(1 - \mathbf{t})^{q-1}}$  by the hypothesis of induction.  Let $\varphi : \overline{G}[-1] \to \rmH$ be the $\overline{G}$-linear map defined by $\varphi (1) = 
h$. Then the homomorphism $\varphi$ is surjective, since $\rmH_1 = Rh$ (Lemma \ref{applemma} (1), (4)). Therefore, as $F\rmH = (0)$ (remember that $xz^2 =0$ in $R$), we get an epimorphism $$(\overline{\rmG}/F\overline{\rmG})[-1] \to \rmH \to 0$$ of graded $\overline{G}$-modules, which forces $(\overline{\rmG}/F\overline{\rmG})[-1] \cong \rmH$, because the Hilbert series of both sides are the same.
\end{proof}

Here we notice that $(0):_R z = x(x,y,z)$ and $(0):_Rz^2 = (x)$, whence $z, w_1, \ldots, w_q$ is not a $d$-sequence relative to $R$. We also see that $z, w_1, \ldots, w_q$ is not amenable, as $zR_\fkp = (0)$ for the prime ideal $\fkp = (y,z, w_1, \ldots, w_q)$ of $R$.

\medskip

\subsubsection*{Amenable sequences and regular sequences}
 
The following  elementary observation shows the strength of a condition to decide which partial system of parameters is already a regular
sequence.

\begin{Proposition} \label{amenablearereg}
Let $(R, \m)$ be a Cohen-Macaulay local ring of dimension $n$ and $M$ a finite $R$-module
of projective dimension $s$. Let $\xx=x_1, \ldots, x_r$, $1 \le r\leq n-s$, be a partial system of parameters for $R$ and $M$.
If $\xx$ is amenable to $M$, then  it is  an $M$--regular sequence.
\end{Proposition}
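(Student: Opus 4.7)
The plan is to combine the filter-regular characterization of amenability (already recalled in the paper) with the Auslander-Buchsbaum formula, and then to run an inductive argument along the sequence $\xx$.

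First I would recall that, because $H_i(\xx;M)$ has finite length for all $i\ge 1$, the sequence $\xx$ is \emph{filter regular} on $M$; equivalently, for each $i=1,\ldots,r$,
\[
x_i \notin \p \qquad \text{for every } \p \in \Ass_R\bigl(M/(x_1,\ldots,x_{i-1})M\bigr)\setminus\{\m\}.
\]
This is the standard reformulation mentioned immediately after the definition of amenability.

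Next, since $R$ is Cohen-Macaulay with $\depth R = n$, the Auslander-Buchsbaum formula gives
\[
\depth_\m M \;=\; \depth R - \pd_R M \;=\; n-s,
\]
and the hypothesis supplies $1\le r\le n-s$.

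The main step is to show by induction on $i\in\{1,\ldots,r\}$ that $x_1,\ldots,x_i$ is an $M$-regular sequence; the case $i=0$ is vacuous. Assuming $x_1,\ldots,x_{i-1}$ is $M$-regular, the depth drops exactly by $i-1$, so
\[
\depth_\m \bigl(M/(x_1,\ldots,x_{i-1})M\bigr) \;=\; n-s-(i-1)\;\ge\; n-s-r+1 \;\ge\; 1.
\]
Consequently $\m \notin \Ass_R(M/(x_1,\ldots,x_{i-1})M)$. Combined with filter regularity, which already excluded all non-maximal associated primes from containing $x_i$, this shows that $x_i$ avoids \emph{every} associated prime of $M/(x_1,\ldots,x_{i-1})M$; hence $x_i$ is a non-zerodivisor on this quotient, completing the inductive step.

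The whole proof is short; the only delicate point is bookkeeping the depth drop so that the inequality $r\le n-s$ is used precisely where it is needed, namely to keep $\depth(M/(x_1,\ldots,x_{i-1})M)$ strictly positive at every stage -- this is exactly what upgrades filter regularity to genuine $M$-regularity.
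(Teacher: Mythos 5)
Your Auslander--Buchsbaum computation and the depth bookkeeping in the inductive step are fine, but the proof collapses at its very first step: amenability does \emph{not} imply filter regularity. Amenability only asks that $\H_i(\xx;M)$ have finite length for $i \ge 1$ (equivalently that $\H_1(\xx;M)$ have finite length), and localizing at a prime $\p \ne \m$ this gives $\H_i(\xx;M_\p) = 0$, which forces $\xx$ to be an $M_\p$-regular sequence only when $\p$ contains the \emph{whole} ideal $(\xx)$; if some $x_j \notin \p$ the vanishing is automatic and carries no information. Filter regularity, by contrast, demands $x_i \notin \p$ for every $\p \in \Ass_R(M/(x_1,\ldots,x_{i-1})M)\setminus\{\m\}$, i.e.\ control at primes containing only an initial segment of $\xx$. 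Concretely, over $R = k[[a,b,c]]$ take $M = R \oplus R/(a,b)$ and $\xx = a,c$: this is a partial system of parameters of $R$ and of $M$, and one computes $\H_1(\xx;M) \cong k$ and $\H_2(\xx;M) = 0$, so $\xx$ is amenable; yet $(0):_M a = R/(a,b)$ has dimension one, so $a$ lies in the associated prime $(a,b) \ne \m$ of $M$ and $\xx$ is not filter regular. (Here $\pd_R M = 2 = n-1 > n - s \ge r$ fails, so the Proposition is not contradicted, but the implication you rely on is refuted.) The genuine Koszul-homology characterization of filter regularity requires $\H_j(x_1,\ldots,x_i;M)$ to have finite length for \emph{every} initial segment $1 \le i \le r$, not merely for $i = r$; the paper's parenthetical remark identifying amenable sequences with filter regular ones is loose on exactly this point, and you cannot assume the stronger condition without proving it --- which under the present hypotheses is essentially the content of the Proposition itself.

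For contrast, the paper's proof takes a route that uses amenability only where it is actually available. It inducts on $\pd_R M$ via a syzygy sequence $0 \rar K \rar F \rar M \rar 0$: since $\H_i(\xx;K) = \H_{i+1}(\xx;M)$ for $i \ge 1$, the module $K$ inherits amenability with $\pd_R K = s-1$, so $\xx$ is $K$-regular by induction; then $\depth K/(\xx)K > 0$, and the four-term exact sequence $0 \rar \H_1(\xx;M) \rar K/(\xx)K \rar F/(\xx)F \rar M/(\xx)M \rar 0$ exhibits the finite-length module $\H_1(\xx;M)$ as a submodule of a module of positive depth, forcing $\H_1(\xx;M) = 0$ and hence, by depth sensitivity of the Koszul complex, the regularity of $\xx$ on $M$. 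The only use of amenability is the principle that a finite-length submodule of a positive-depth module vanishes --- precisely the leverage your argument is missing.
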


\begin{proof}
Let $K$ be the module of syzygies of $M$:
\[ 0 \rar K \lar F \lar M \rar 0.\]
Since $\H_i(\xx;F)=(0)$, $i\geq 1$, we have $\H_i(\xx;K) = \H_{i+1}(\xx;M)$ for $i\geq 1.$
This means that $\xx$ is amenable for $K$, and by induction on the projective dimensions, $\xx$ is a regular
sequence on $K$.
Since $r < \depth  K$, $\H_{\m}^0(K/(\xx)K) = (0)$. From the
the exact sequence
\[ 0 \rar \H_1(\xx;M) \lar K/(\xx)K \lar F_0/(\xx)F_0 \lar M/(\xx)M \rar 0\]
 we get $\H_{\m}^0(\H_1(\xx;M)) = (0)$, which by assumption means $\H_1(\xx;M)=(0)$.
\end{proof}

\medskip

\subsubsection*{Amenable sequences and finite local cohomology}
Let $R$ be a Noetherian local ring and $M$ a finite $R$-module of dimension $d > 0$. 
The authors in \cite{chern7} attached to a system $\xx=x_1, \ldots, x_d$ of parameters of $M$ (which is naturally amenable to $M$)
 the quantity 
\[ \chi_1(\xx;M) = \sum_{i= 1}^{d} (-1)^{i-1} \lambda( \H_{i}(\xx;M) ),\]
which is called the {\it partial} Euler characteristic.  If $\xx= x_1, \ldots, x_r $~$(0 < r \le d)$ is the initial subsequence of
a full system $ x_1, \ldots, x_d $ of parameters that is a $d$-sequence relative to $R$, they lead to explicit formulas for {\it some} of the standard Hilbert coefficients $\e_i(\xx; R)$ (\cite[Theorem 3.7]{chern7}) and their global bounds.  In similar situations, we will obtain bounds for all the $\rmj_i(\xx;M)$ (see Section 5).

\section{Koszul homology: Explicit formulas for $\mathbf{j}$-coefficients}

In this section 
we derive a general formula for the $\mathbf{j}$-coefficients of a general class of modules.
Let us begin by introducing some notation. Throughout this section, 
let $(R,\m)$ be a Noetherian local ring and $M$  a finite $R$-module  of dimension $d > 0$. Let $\xx=x_1,  \ldots, x_r $ be a partial system of parameters of $M$, where $0 \leq r \leq d$. We denote by $(\xx)$ the ideal generated by $\xx$. 
Let $\rmG = \mathrm{gr}_{(\xx)}(R)$, $\rmG (M) = \mathrm{gr}_{(\xx)}(M)$, and  $\H = \rmH_\m^0(\rmG (M))$. Finally, let $\H_i(\xx;M)$ denote the Koszul homology modules of $M$ relative to $\xx$.
Let us begin by highlighting  some of its general properties.

\begin{Proposition}\label{lemma1} We have the following.
\begin{enumerate}
\item[$(1)$] $\m \rmG \in \Spec \rmG$ and $\Ass_{\rmG} \H = \{P \in \Ass_{\rmG} \rmG(M) \mid \m \rmG \subseteq P\}$. In particular,  $\dim \rmG/P \le r$ for all $P \in \Ass_{\rmG} \H$. 
\item[$(2)$] If $\H \ne (0)$, then $\dim_{\rmG} \H \le r$, where the equality holds if and only if $\m \rmG \in \Ass_{\rmG} \H$. 
\item[$(3)$] If $r < d$ and $\dim \rmG/P = d$ for all $P \in \Ass_{\rmG} \rmG (M)$, then $\H = (0)$.
\end{enumerate}
\end{Proposition}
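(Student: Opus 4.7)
My plan is to derive all three parts from a single structural fact: $\m G$ is a prime ideal of $G$ with $\dim G/\m G = r$ and $G/\m G \cong (R/\m)[T_1,\ldots,T_r]$. Once this is in hand, the description of $\Ass_G \H$ becomes a standard local-cohomology identity, and parts (2) and (3) are formal consequences.

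To prove $\m G \in \Spec G$ with $\dim G/\m G = r$, I would study the natural graded surjection
$$\varphi \colon (R/\m)[T_1,\ldots,T_r] \twoheadrightarrow G/\m G, \qquad T_i \longmapsto x_i + \m(\xx),$$
coming from the presentation of $G$ over $G_0 = R/(\xx)$, and argue that $\varphi$ is an isomorphism --- i.e., that $\xx$ is analytically independent in $R$. The reduction is to $\bar R = R/\Ann(M)$: since $M$ is a finite faithful $\bar R$-module, the hypothesis $\dim M = r + \dim M/(\xx)M$ becomes $\dim \bar R = r + \dim \bar R/(\xx)\bar R$, so $\xx$ is a partial system of parameters of the Noetherian local ring $\bar R$. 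I would then extend $\xx$ to a full system of parameters $\xx,\yy$ of $\bar R$ and apply the classical analytic independence of a full SOP: $F_{\m\bar R}((\xx,\yy)\bar R) \cong (R/\m)[T_1,\ldots,T_d]$. Writing $\bar G = \gr_{(\xx)\bar R}(\bar R)$, the composite
$$(R/\m)[T_1,\ldots,T_r] \xrightarrow{\varphi} G/\m G \longrightarrow \bar G/\m \bar G \longrightarrow F_{\m\bar R}((\xx,\yy)\bar R) \cong (R/\m)[T_1,\ldots,T_d]$$
sends $T_i \mapsto T_i$ and so coincides with the standard inclusion of polynomial subrings, which is injective. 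Hence $\varphi$ is injective, and being surjective it is an isomorphism. Consequently $G/\m G$ is a polynomial domain of Krull dimension $r$, and $\m G \in \Spec G$.

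With this in hand, since $\m$ acts on $\rmG (M)$ through $R \to G_0 \hookrightarrow G$, we have $\H = \rmH^0_\m(\rmG (M)) = \rmH^0_{\m G}(\rmG (M))$. Applying the standard formula $\Ass_G \rmH^0_J(N) = \{P \in \Ass_G N : J \subseteq P\}$ with $J = \m G$ and $N = \rmG (M)$ yields the claimed description of $\Ass_G \H$, and the bound $\dim G/P \le \dim G/\m G = r$ is then clear. For part (2), $\H$ is a finitely generated $G$-module (as $G$ is Noetherian and $\rmG (M)$ is finite over $G$), so $\dim_G \H = \max\{\dim G/P : P \in \Ass_G \H\}$; this is at most $r$, and equality forces some such $P$ to achieve $\dim G/P = r$, which together with $P \supseteq \m G$ and $\dim G/\m G = r$ compels $P = \m G$. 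The converse direction is immediate. For part (3), if $\H \ne (0)$ then $\Ass_G \H \ne \emptyset$; picking $P \in \Ass_G \H \subseteq \Ass_G \rmG (M)$, we would have $\dim G/P = d$ by hypothesis but $\dim G/P \le r < d$ by part (1), a contradiction.

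The main obstacle is the analytic-independence step underlying the first paragraph. Primality of $\m G$ alone would only require $G/\m G$ to be a domain, but the precise equality $\dim G/\m G = r$ is essential both for the ``iff'' in part (2) and for the dimensional clash in part (3). Passing to $\bar R = R/\Ann(M)$ to turn $\xx$ into an honest partial SOP of a Noetherian local ring, then comparing with the polynomial ring arising from a full SOP of $\bar R$, is the cleanest route to producing the required isomorphism $\varphi$.
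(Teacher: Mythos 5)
Your proof is correct and follows essentially the same route as the paper: establish that $\rmG/\m\rmG$ is the polynomial ring $(R/\m)[\TT_1,\ldots,\TT_r]$, identify $\Ass_{\rmG}\H$ with the associated primes of $\rmG(M)$ containing $\m\rmG$, and deduce (2) and (3) formally. The only difference is that the paper simply asserts the isomorphism $\rmG/\m\rmG\cong (R/\m)[\TT_1,\ldots,\TT_r]$, whereas you supply the justification (reduction to $R/\Ann(M)$ and classical analytic independence of a system of parameters), which is a welcome, correct filling-in of detail rather than a different argument.
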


\begin{proof}
(1) Since $\rmG/ \m \rmG \cong (R/\m)[\TT_1, \TT_2, \ldots, \TT_r]$ 
is the polynomial ring, we have $\m \rmG \in \Spec \rmG$.
 Let $P \in \Ass_{\rmG} \H$. Then $P \in \Ass_{\rmG} \rmG (M)$ and $P = (0):_{\rmG} \varphi$ for some $\varphi \in \H \setminus (0)$,
  whence $\m \rmG \subseteq P$, because $\m^\ell \varphi = (0)$ for $\ell \gg 0$. Conversely, assume that $P \in \Ass_{\rmG} \rmG (M)$ with $\m \rmG \subseteq P$. We write $P = (0):_{\rmG} \varphi$ for some $\varphi \in \rmG (M) \setminus (0)$. Then $\m \varphi = (0)$ since $\m \rmG \subseteq P$, so that $P \in \Ass_{\rmG} \rmH$. Of course, $\dim \rmG/P \le r$ if $P \in \Ass_{\rmG} \H$, because  for all $P \in \Ass_{\rmG} \H$, $\rmG/P$ is a homomorphic image of $\rmG/\m \rmG$ and $\dim \rmG/ \m \rmG = r$.
Assertions (2), (3) now follow from (1).
\end{proof}

One of  our main techniques to determine $\mathbf{j}$-polynomials is the following, where unadorned tensor products are over $R$.

\begin{Theorem}\label{jdseqcx} 
Let $\xx=x_1, \ldots, x_r$~$(0 < r \le d)$ be a partial system of parameters of $M$ that is a $d$-sequence relative to $M$. Let $S=R[\TT_1, \ldots, \TT_r]$ be the polynomial ring. Then  there exists a complex of graded $S$-modules:
 \begin{center}
{\small $ 0 \rar  \H_{\m}^0(\H_r(\xx;M)) \otimes S[-r] \rar \cdots \rar  \H_{\m}^0(\H_1(\xx;M))\otimes S[-1] \rar \H_{\m}^0(\H_0(\xx;M)) \otimes
 S \rar \H_{\m}^0(\gr_{(\xx)}(M)) \rar 0.$}
 \end{center}
Furthermore, if $\xx$ is amenable to $M$, then the following complex is acyclic:
 \begin{center}
{\small $ 0 \rar \H_r(\xx;M) \otimes S[-r] \rar \cdots \rar  \H_1(\xx;M)\otimes S[-1] \rar \H_{\m}^0(\H_0(\xx;M)) \otimes
 S \rar \H_{\m}^0(\gr_{(\xx)}(M)) \rar 0.$}
 \end{center}
\end{Theorem}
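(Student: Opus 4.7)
The plan is to derive both complexes from the approximation complex $\calM(\xx;M)$ of \cite{HSV3II}. Since $\xx$ is a $d$-sequence relative to $M$, Proposition~\ref{amproperseq}(1) says $\xx$ is a proper sequence relative to $M$, which is exactly the hypothesis ensuring that
\[
0 \to \H_r(\xx;M) \otimes_R S[-r] \to \cdots \to \H_1(\xx;M) \otimes_R S[-1] \to \H_0(\xx;M) \otimes_R S \to \gr_{(\xx)}(M) \to 0
\]
is acyclic, where $\H_0(\xx;M) = M/(\xx)M$. Apart from this input, the argument is formal.

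For (1), I would apply the left-exact additive functor $\H_\m^0(-)$ term by term to the approximation complex. Since $S = R[\TT_1, \ldots, \TT_r]$ is $R$-free, the decomposition $N \otimes_R S = \bigoplus_\alpha N \cdot \TT^\alpha$ for any $R$-module $N$ yields the identification $\H_\m^0(N \otimes_R S) = \H_\m^0(N) \otimes_R S$ of graded $S$-modules (a homogeneous element is $\m$-torsion iff each of its coefficients is). Applied term by term, this produces the complex asserted in (1); functoriality of $\H_\m^0$ is all that is required to see that the successive compositions vanish.

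For (2), I would exploit amenability: $\H_i(\xx;M)$ has finite length for all $i \geq 1$, so each $C_i := \H_i(\xx;M) \otimes_R S[-i]$ with $i \geq 1$ is annihilated by some power of $\m$, whence $\H_\m^0(C_i) = C_i$ and $\H_\m^j(C_i) = 0$ for $j \geq 1$. Setting $B_i := \Im(C_{i+1} \to C_i)$ for $0 \leq i \leq r-1$ (with $B_r := 0$), the acyclicity of the approximation complex splits it into short exact sequences $0 \to B_i \to C_i \to B_{i-1} \to 0$ for $1 \le i \le r$, together with $0 \to B_0 \to C_0 \to \gr_{(\xx)}(M) \to 0$. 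Each $B_i$ with $0 \le i \le r-1$ is either a submodule of a torsion $C_j$ with $j \ge 1$ or a quotient of one, hence itself $\m$-torsion, so $\H_\m^j(B_i) = 0$ for $j \ge 1$. Applying $\H_\m^0(-)$ thus preserves the first batch of short exact sequences verbatim and turns the last one into
\[
0 \to B_0 \to \H_\m^0(C_0) \to \H_\m^0(\gr_{(\xx)}(M)) \to 0.
\]
Splicing these together and using the identification from (1) yields the desired acyclic complex. The main conceptual step is recognizing that amenability forces every module strictly to the left of $C_0$ in the approximation complex to be $\m$-torsion; once this is in hand, the acyclicity of (2) becomes a diagram chase, and no appeal to Theorem~\ref{14} is needed to see that $\H_\m^0(C_0) \to \H_\m^0(\gr_{(\xx)}(M))$ is surjective, since that falls out of the vanishing $\H_\m^1(B_0) = 0$.
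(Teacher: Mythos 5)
Your proof is correct and follows essentially the same route as the paper: invoke the acyclicity of the approximation complex $\mathcal{M}(\xx;M)$, apply $\H_{\m}^0(-)$ termwise (using $\H_{\m}^0(N\otimes_R S)=\H_{\m}^0(N)\otimes_R S$) to get the first complex, and observe that amenability makes every term to the left of degree $0$ $\m$-power-torsion, so the short exact sequences obtained by splitting the acyclic complex survive the functor and $\H_{\m}^1(B_0)=0$ gives the surjectivity at the right end. One small correction: the acyclicity of the $\mathcal{M}$-complex in \cite[Theorem 4.1]{HSV3II} is established for $d$-sequences, not for proper sequences (proper sequences only yield acyclicity of the $\mathcal{Z}$-complex), so your detour through Proposition~\ref{amproperseq}(1) is a misattribution — though a harmless one, since the $d$-sequence hypothesis is exactly what the theorem assumes.
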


\begin{proof}
We refer to \cite{HSV3II} for details about the approximation complexes $\mathcal{M}(\xx;M)$ used here:
\[ 0 \rar \H_r(\xx;M)\otimes S[-r] \rar \cdots \rar \H_1(\xx;M)\otimes S[-1] \rar \H_0(\xx;M) \otimes
 S \rar \gr_{(\xx)}(M) \rar 0.\]  This 
complex  is an acyclic complex of  graded $S$-modules, 
because $\xx$ is a $d$-sequence relative to $M$(\cite[Theorem 4.1]{HSV3II}). Our complex arises from applying the functor $\H_{\m}^0(-)$ to $\mathcal{M}(\xx;M)$. 
 Now assume that $\xx$ is amenable to $M$, so that $ \H_i(\xx;M) =
 \H_{\m}^0(\H_i(\xx;M))$ for all $i\geq 1$. 
We note that the image $L$ of $\H_1(\xx;M)\otimes S[-1]$ in $\H_0(\xx;M) \otimes S$ has finite length and therefore
$\H_{\m}^1(L)=0$. Since for all $i\ge 1$ $\H_i(\xx;M) \otimes_{R} S[-i]$ is  supported in the set $\{\m\}$, we obtain the 
exact complex $\H_{\m}^0(\mathcal{M}(\xx;M))$:
\[ 0 \rar \H_r(\xx;M) \otimes S[-r] \rar \cdots \rar \H_1(\xx; M) \otimes S[-1] \rar \H_{\m}^0(\H_0(\xx;M)) \otimes
 S \rar \H_{\m}^0(\gr_{(\xx)}(M)) \rar 0 \]
as asserted. 
\end{proof}

Several properties of the graded $\gr_{(\xx)}(R)$-module  $\H = \H^0_{\m}(\gr_{(\xx)}(M))$ are read off
 the exactness of the complex. More explicitly,
  $\H$  is a module over $(R/J)[\TT]$,
 $J$ taken as an $\m$-primary ideal annihilating the coefficient modules
of the complex. 
 Let us first explain the vanishing of $\H$ and the (maximal) Cohen-Macaulayness 
 of $\H$.

 \begin{Corollary} \label{vanishingofH} 
 Let  $I=(\xx)$ be the ideal of $R$, where $\xx = x_1, \ldots, x_r$  is an amenable partial system of parameters of $M$ that is a $d$-sequence relative to $M$. Let $\TT= \TT_1, \ldots, \TT_r$ be variables, $S=R[\TT]$ the polynomial ring and $T=(R/J)[\TT]$, where $J$ taken as an $\m$-primary ideal annihilating the coefficient modules of the complex given in Theorem {\rm \ref{jdseqcx}}. Then we have the following.
 \begin{itemize}
 \item[{\rm (1)}] $\H_{I}(M)=(0)$ if and only if $\depth  M/IM >0$.
 \item[{\rm (2)}] $\H_{I}(M)$ is a Cohen-Macaulay $T$-module of dimension $r$ if and 
 and only if $\xx$ is an $M$-regular sequence and $\depth  M/IM = 0$.
 \item[{\rm (3)}] The $\TT$-depth of $\H_{I}(M)$ is equal to the $\xx$-depth of $M$.
\end{itemize} 
 \end{Corollary}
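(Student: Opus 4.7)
The plan is to read all three statements off the acyclic complex of Theorem~\ref{jdseqcx}. Write $N_0 = \H_\m^0(M/IM)$ and $N_i = \H_i(\xx;M)$ for $i \ge 1$; each $N_i$ has finite length, so any $\m$-primary $J$ annihilating every $N_i$ makes the complex a sequence of $T$-modules $D_i \cong N_i \otimes_{R/J} T$ (up to grading shift). Each $D_i$ is a polynomial extension of a finite-length $R/J$-module, hence Cohen--Macaulay of dimension $r$ over $T$, with $\TT$ a $D_i$-regular sequence; in particular $\Tor_q^T(R/J, D_i) = 0$ for $q \ge 1$ and $D_i \otimes_T R/J \cong N_i$.

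For (1), the functor $\H_\m^0(-)$ preserves the grading, so reading the complex in degree $0$ gives $[\H_I(M)]_0 \cong \H_\m^0(M/IM)$ (the shifts $[-i]$ kill the contributions from $i \ge 1$). Since $\H_I(M)$ is generated in degree $0$ over $\rmG$ by Theorem~\ref{14}(2), vanishing of $\H_I(M)$ is equivalent to vanishing of $\H_\m^0(M/IM)$, which is equivalent to $\depth M/IM > 0$.

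For (3), I compute the Koszul homology $\H_q(\TT; \H_I(M)) = \Tor_q^T(R/J, \H_I(M))$ and apply the standard identity $\depth_\TT \H_I(M) = r - \sup\{q : \H_q(\TT; \H_I(M)) \ne 0\}$. Break the acyclic complex into short exact sequences and iterate the long exact $\Tor$-sequences against $R/J$. The decisive input is the vanishing of the induced differentials on $D_\bullet \otimes_T R/J$: the differentials of the approximation complex $\mathcal{M}(\xx;M)$ of \cite{HSV3II} are built from the Koszul boundary on $\xx$ by the substitution $x_j \mapsto \TT_j$, hence they land in $(\TT)D_\bullet$ and vanish modulo $(\TT)$. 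A straightforward induction then yields $\Tor_q^T(R/J, \H_I(M)) \cong N_q$ for every $q \ge 0$, which translates into $\H_q(\TT; \H_I(M)) \cong \H_q(\xx; M)$ for $q \ge 1$ and $\H_0(\TT; \H_I(M)) \cong \H_\m^0(M/IM)$. The parallel Koszul characterization of the $\xx$-depth of $M$ then gives the asserted equality.

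Part (2) follows from (1) and (3). If $\xx$ is $M$-regular and $\depth M/IM = 0$, then $N_i = 0$ for $i \ge 1$ and the complex collapses to an isomorphism $\H_I(M) \cong \H_\m^0(M/IM) \otimes_R S$, manifestly Cohen--Macaulay of dimension $r$ over $T$. Conversely, if $\H_I(M)$ is Cohen--Macaulay of dimension $r$ over $T$, then $\H_I(M) \ne 0$, so (1) forces $\depth M/IM = 0$, while $\depth_T \H_I(M) = r$ combined with (3) forces the $\xx$-depth of $M$ to equal $r$, i.e.\ $\xx$ is a regular sequence on $M$. The one nontrivial technical point is isolating the vanishing of the induced differentials modulo $(\TT)$; once that structural feature of the approximation complex is recorded, everything else is routine homological bookkeeping.
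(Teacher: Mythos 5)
Your proof is correct, and for parts (1) and (3) it takes a genuinely different route from the paper's. For (1) the paper reduces the entire exact complex modulo $(\TT)$ and observes that the result is a complex of $R$-modules concentrated in pairwise distinct degrees, hence with zero differentials and therefore trivial; you instead read off the degree-zero strand to get $[\H_I(M)]_0 \cong \H_\m^0(M/IM)$ and invoke Theorem~\ref{14}(2) (generation in degree $0$). For (3) the paper simply cites \cite[Proposition 3.8]{HSV3II}, identifying the $\xx$-depth of $M$ with the $\TT$-depth of $\gr_I(M)$, and gestures at the image of the first differential; you replace this by computing $\H_q(\TT;\H_I(M))=\Tor_q^S\bigl(S/(\TT)S,\H_I(M)\bigr)$ directly from the acyclic complex of Theorem~\ref{jdseqcx}, viewed as a resolution by modules on which $\TT$ is a regular sequence, together with the (correct) observation that the differentials of $\mathcal{M}(\xx;M)$ land in $(\TT)\mathcal{M}_\bullet$ and so vanish modulo $(\TT)$ --- a fact also available from the pure degree argument above. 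This buys you the sharper conclusions $\H_q(\TT;\H_I(M))\cong\H_q(\xx;M)$ for $q\ge 1$ and $\H_0(\TT;\H_I(M))\cong\H_\m^0(M/IM)$, from which (3) and the Hilbert series of Corollary~\ref{jdseqCor} both fall out, at the price of being longer and of needing $\H_I(M)\ne(0)$ before applying depth-sensitivity of the Koszul complex (the degenerate case of (3), where the zero module has infinite depth while the $\xx$-depth of $M$ equals $r$, must be set aside --- as the paper also tacitly does). Part (2) is the same in substance: the paper compresses it to ``the same argument shows $\H_I(M)\cong\H_\m^0(M/IM)\otimes S$,'' while you derive it from (1) and (3) plus Koszul rigidity ($\H_1(\xx;M)=(0)$ forces $\xx$ to be $M$-regular over a local ring).
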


\begin{proof} (1) Suppose that $\H_{I}(M)=(0)$. Then by Theorem \ref{jdseqcx}
we have an exact complex
\[ 0 \rar \H_r(\xx;M) \otimes S[-r] \rar \cdots \rar \H_1(\xx; M) \otimes S[-1] \rar \H_{\m}^0(\H_0(\xx;M)) \otimes
 S   \rar 0\]
of graded $S$-modules. Reduction mod $(\TT)$ yields  an acyclic  complex of $R$-modules graded in different degrees, which therefore must be trivial. Conversely, suppose that $\depth  M/(\xx)M > 0$. Then $\H_{\m}^0(\H_0(\xx;M)) = \H_{\m}^0(M/(\xx)M) = (0)$, so that 
$\H_{I}(M)$ vanishes by Theorem \ref{jdseqcx}.

(2) The same argument will show that $\H_{I}(M) \cong \H_{\m}^0(M/(\xx)M)\otimes S$.

(3) By \cite[Proposition 3.8]{HSV3II}, the $\xx$-depth $M$ is equal to the $\TT$-depth of $\gr_{(\xx)}(M)$. This value is also obtainable from the $\TT$-depth of the image of $\H_1(\xx;M)\otimes S[-1]$ in $\H_0(\xx;M) \otimes S$.
\end{proof}

\begin{Corollary}\label{jdseqCor} 
Let $\xx= x_1, \ldots, x_r $~$(0 \leq r \le d)$ be an amenable partial system of parameters of $M$ that is a $d$-sequence relative to $M$. 
We set
\[ h_{i} =  \left\{ \begin{array}{ll} {\ds \lambda(\H_i(\xx;M)) } & \quad \mbox{\rm if}\;\; i >0,\\ &\\
  \lambda(\H_{\m}^0(\H_0(\xx;M)) & \quad \mbox{\rm if}\;\; i=0.\\
  \end{array} \right.\]
We then have the following. 
\begin{itemize}
\item[{\rm (1)}]  The   Hilbert series $[\![\H]\!]$ of the $\mathbf{j}$-transform $\H = \H_{\m}^0(\gr_{(\xx)}(M))$ is
\[ [\![\H]\!]= \frac{\sum_{i=0}^r (-1)^i h_i \ttt^i}{(1-\ttt)^{r}} \]
and the $\mathbf{j}$-polynomial of $M$ with respect to $\xx$ is
\[ \sum_{i=0}^r (-1)^i \rmj_i(\xx;M) {{n+r-i}\choose{r-i}}, \ \ \mbox{\rm where} \;\;
\rmj_{i}(\xx;M)= \sum_{k=i}^{r} (-1)^{k} h_{k} {{k}\choose{i}}.\]
In particular
\[ 
\rmj_0(\xx;M) = {\ds  \sum_{k=0}^r (-1)^{k} h_{k}} \ \  \mbox{and} \ \
\rmj_1(\xx;M) = {\ds \sum_{k=1}^r(-1)^{k} \cdot k \cdot  h_{k}.}
\]
\item[{\rm (2)}] $\xx$ is an $M$-regular sequence if and only if $\rmj_1(\xx;M) = \cdots = \rmj_r(\xx;M) =0$.
\end{itemize}
\end{Corollary}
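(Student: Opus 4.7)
The plan is to derive the Hilbert series of $\H$ from the acyclic complex of Theorem~\ref{jdseqcx} and then translate it into the stated Hilbert-polynomial form. Since $\xx$ is amenable and a $d$-sequence, Theorem~\ref{jdseqcx} provides an acyclic complex whose term in homological degree $i \geq 1$ is $\H_i(\xx;M)\otimes_R S[-i]$; viewing $S = R[\TT_1,\ldots,\TT_r]$ as graded by $\TT$-degree, its degree-$n$ piece has length $h_i\binom{n-i+r-1}{r-1}$, so its Hilbert series is $h_i\ttt^i/(1-\ttt)^r$. The same reasoning yields $h_0/(1-\ttt)^r$ for $\H_{\m}^0(\H_0(\xx;M))\otimes_R S$. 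Taking the alternating sum of Hilbert series across the exact complex therefore gives
\[ \bl \H \br \;=\; \frac{\sum_{i=0}^r (-1)^i h_i \ttt^i}{(1-\ttt)^r}. \]

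Next I would extract the Hilbert polynomial $\psi^{M}_{(\xx)}(n)$, whose generating series is $\bl\H\br/(1-\ttt)= \sum_i (-1)^i h_i \ttt^i /(1-\ttt)^{r+1}$. Using the identity $\sum_{n\geq 0}\binom{n+r-j}{r-j}\ttt^n = 1/(1-\ttt)^{r-j+1}$, the claimed form $\sum_j (-1)^j\rmj_j(\xx;M)\binom{n+r-j}{r-j}$ corresponds to a series $\sum_j (-1)^j \rmj_j(\xx;M)/(1-\ttt)^{r-j+1}$. Matching the two after clearing $(1-\ttt)^{r+1}$ reduces to the polynomial identity
\[ \sum_{i=0}^r (-1)^i h_i \ttt^i \;=\; \sum_{j=0}^r (-1)^j \rmj_j(\xx;M)\,(1-\ttt)^j. \]
Expanding $\ttt^i = \sum_{j\leq i}(-1)^j\binom{i}{j}(1-\ttt)^j$ (equivalently, inverting the triangular system obtained by expanding $(1-\ttt)^j$) and matching coefficients yields the closed form $\rmj_i(\xx;M) = \sum_{k\geq i}(-1)^k\binom{k}{i}h_k$; specializing to $i=0,1$ recovers the stated expressions.

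Part~(2) is then a direct corollary of this inversion. If $\xx$ is an $M$-regular sequence, then $\H_i(\xx;M)=0$ for $i\geq 1$, so $h_i = 0$ and hence $\rmj_i(\xx;M)=0$ for $i\geq 1$. Conversely, if $\rmj_1(\xx;M)=\cdots=\rmj_r(\xx;M)=0$, the triangular inversion above forces $h_i=0$ for $i\geq 1$; in particular $\lambda(\H_1(\xx;M))=0$, so $\H_1(\xx;M)=0$, and since $\xx\subseteq\m$ in a local ring, the classical Koszul-homology criterion gives that $\xx$ is $M$-regular. The argument is formal once Theorem~\ref{jdseqcx} is in hand; the only bookkeeping hurdle is the binomial-basis change between $\{\binom{n+r-i}{r}\}$ and $\{\binom{n+r-j}{r-j}\}$, which the generating-function manipulation handles cleanly, so I anticipate no genuine obstacle.
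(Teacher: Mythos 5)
Your proposal is correct and follows exactly the route the paper intends: the paper's own proof of part (1) simply says the formula "is read directly from the acyclic complex given in Theorem~\ref{jdseqcx}," and of part (2) that vanishing of the $\rmj_i$ for $i\geq 1$ is equivalent to vanishing of the $h_i$ for $i\geq 1$ — your generating-function bookkeeping and triangular inversion are precisely the details being elided. The only point worth making explicit (which you do) is that $h_1=0$ forces $\H_1(\xx;M)=(0)$ because amenability guarantees this module has finite length, after which the standard Koszul criterion applies.
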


\begin{proof}
(1) It is  read directly  from the acyclic complex given in Theorem \ref{jdseqcx}.
 
(2) The vanishing of the $\rmj_i(\xx;M)$'s for $i\geq 1$  is equivalent to the vanishing of the $h_i$'s for $i \geq 1$.
\end{proof}

\begin{Question} {\rm Let $\H=\H^{0}_{\m}( \gr_{(\xx)}(M) )$, where $\xx$ is an amenable partial system of parameters of $M$ that is a $d$-sequence relative to $M$. As given in the proof of Corollary \ref{j1sig}, we have $\rmj_1(\xx; M) = \e_1(\xx^*, \H) \leq 0$, where $\ff = f_1, \ldots, f_r$ denote the sequence of the initial forms $x_i^*$'s of $x_i$'s. Hence, if $\H$ is unmixed of dimension $r$, then the vanishing of $\rmj_1(\xx;M)$  means that $\H$ is Cohen-Macaulay  by \cite[Theorem 3.1]{chern5}. It will follow that $ \lambda(\H_i(\xx;M))=0$ for all $i>0$ and thus
$\xx$ is $M$-regular by Theorem \ref{jdseqcx} and Corollary \ref{vanishingofH}). How do we guarantee that $\H$ is unmixed? We need a condition on $M$ such that $\gr_{(\xx)}(M)$  satisfies Serre's condition ($S_1$).   Then $\H$, being a submodule,
inherits the condition. }
\end{Question}

\begin{Example}\label{j1example}{\rm 
Let $R$ be a Cohen-Macaulay local ring, $F$  a finite free $R$-module, and $M$ a submodule of $F$ such that
$C = F/M$ has finite length. Let $\xx=x_1, \ldots, x_r$, $0< r \le \dim R$,
 be a  partial system of parameters of $R$ that is an amenable $d$-sequence relative to  $M$ such that $(\xx)C=(0)$. Then the  commutative diagram
\[
\diagram
0 \rto  & (\xx)^{n+1}M \rto\dto & (\xx)^{n+1}F \rto \dto & C_{n+1}
\rto\dto  &0 \\
0 \rto          &  (\xx)^nM \rto                 & (\xx)^nF \rto  & C_{n} \rto  &0
\enddiagram
\]
with exact rows
yields the exact sequence
\[ 0 \rar C_{n+1} \lar (\xx)^nM/(\xx)^{n+1}M \lar (\xx)^nF/(\xx)^{n+1}F \lar C_{n} \rar 0,\] 
since the rightmost vertical mapping is trivial. Hence $[\H_{\m}^0(\gr_{(\xx)}(M))]_n = C_{n+1}$ for all $n \ge 0$.  Corollary~\ref{jdseqCor}  gives  a formula for the $\mathbf{j}$-polynomial in terms of the
Koszul homology modules $\H_i(\xx;M)$. Let us try to extract it from the exact sequence
$$0\rar M \rar F \rar C \rar 0.$$ We have the isomorphisms
\[
\H_{i}(\xx;M)  \cong  \H_{i+1}(\xx;C) = \wedge^{i+1} (R^{\oplus r})\otimes C\ \ \mbox{for} \ \ i\geq 1,\\
\]
so that
\[
\lambda(\H_i(\xx;M))  = \lambda(C){{r}\choose{i+1}}\ \ \mbox{for} \ \ 1 \leq  i \leq r-1 \quad \mbox{\rm and} \quad \lambda(\H_r(\xx;M)) =0.
\]
Also from 
 the exact sequence
  \[ 0 \rar \H_1(\xx;C) = C^{\oplus r} \lar \H_0(\xx;M) \lar \H_0(\xx;F) \rar \H_0(\xx;C) = C \rar 0, \]
  we get
 \[
 \H_{\m}^0(\H_0(\xx;M)) =C^{\oplus r}.
 \] By Corollary \ref{jdseqCor}  the Hilbert series $[\![\H]\!]$ of $\H_{\m}^0(\gr_{(\xx)}(M))$ is 
\[ [\![ \H]\!]= {\frac{\sum_{i=0}^{r}(-1)^{i}h_i \ttt^i}{(1-\ttt)^{r}}} 
= \lambda(C) {\frac{\sum_{i=0}^{r-1} (-1)^{i} {{r}\choose {i+1}}\ttt^i}{(1-\ttt)^{r}}}.\]
We furthermore have 
\[ \rmj_0(\xx;M) = \lambda(C) \sum_{i=0}^{r-1} (-1)^{i} {{r}\choose{i+1}} = \lambda(C)\]
and
\[ \rmj_{1}(\xx; M)= \lambda(C) \sum_{i=1}^{r-1}(-1)^{i}  { {r}\choose{i+1}} i = - \lambda(C).\]
}\end{Example}

\begin{Corollary} Let $R$ be a Cohen-Macaulay local ring of dimension $d\geq 3$ and $M$ a finite torsion-free $R$-module of projective dimension two. Let $\xx = x_1, \ldots, x_r $ $(0 < r \le d)$ be a partial system of parameters of $R$ and $M$ that is an amenable  $d$-sequence relative to $M$. We then have the  following.
\begin{itemize}
\item[{\rm (1)}]
 If $r\leq d-2$, then $\xx$ is an $M$-regular sequence.
\item[{\rm (2)}]
 If $r\geq d-1$, then $\rmj_1(\xx;M)\neq 0$.
\end{itemize}
\end{Corollary}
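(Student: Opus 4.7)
The plan is to dispose of (1) as an immediate application of Proposition~\ref{amenablearereg} and then to split (2) into the subcases $r = d - 1$ and $r = d$. For (1), I take $n = d$ and $s = \pd M = 2$ in Proposition~\ref{amenablearereg}; the hypothesis $r \le d - 2 = n - s$ is exactly what is required, and $\xx$ is an $M$-regular sequence.

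For case $r = d - 1$ of (2), I would pass to the first syzygy of $M$. Fix a minimal free resolution $0 \to F_2 \to F_1 \to F_0 \to M \to 0$ and set $K = \ker(F_0 \twoheadrightarrow M)$; then $K$ is nonzero (as $M$ is not free), torsion-free of dimension $d$, and has $\pd K = 1$. From the short exact sequence $0 \to K \to F_0 \to M \to 0$, using that $\H_i(\xx; F_0) = 0$ for $i \ge 1$ (because $\xx$ is $R$-regular and $F_0$ is free), the long exact Koszul sequence yields $\H_i(\xx; K) \cong \H_{i+1}(\xx; M)$ for every $i \ge 1$. Hence $\xx$ is amenable for $K$; being a partial system of parameters for $R$ and $K$ torsion-free of dimension $d$, it is also a partial system of parameters of $K$. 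Applying Proposition~\ref{amenablearereg} to $K$ with $s = 1$, the inequality $r = d - 1 \le d - 1$ gives that $\xx$ is $K$-regular. Consequently $\H_1(\xx; K) = 0$, whence $h_2 := \lambda(\H_2(\xx; M)) = 0$, and $h_i = 0$ for $i \ge 3$ since $\pd M = 2$. Because $\depth M = d - 2 < r$, $\xx$ cannot be $M$-regular, so $h_1 > 0$; Corollary~\ref{jdseqCor} then yields $\rmj_1(\xx; M) = -h_1 + 2h_2 = -h_1 < 0$, which is in particular nonzero.

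For case $r = d$ of (2), the ideal $(\xx)$ is $\m$-primary, so every graded piece of $\gr_{(\xx)}(M)$ already has finite length and $\H_\m^0(\gr_{(\xx)}(M)) = \gr_{(\xx)}(M)$. Consequently the $\mathbf{j}$-function coincides with the Hilbert-Samuel function $n \mapsto \lambda(M/(\xx)^{n+1}M)$, and $\rmj_i(\xx; M) = \e_i(\xx; M)$ for every $i$. Since $M$ is torsion-free over the Cohen-Macaulay ring $R$, $M$ is unmixed of dimension $d$; and since $\pd M = 2$, $\depth M = d - 2 < d$, so $M$ is not Cohen-Macaulay. The prior results for $d$-sequence full systems of parameters on an unmixed module (cf.~\cite{chern2},~\cite{chern3},~\cite{chern5},~\cite{chern}) give $\e_1(\xx; M) \le 0$, with equality if and only if $M$ is Cohen-Macaulay; hence $\e_1(\xx; M) < 0$ and $\rmj_1(\xx; M) \ne 0$. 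The main obstacle is precisely this case $r = d$, which leans on the external $\e_1$-characterization of Cohen-Macaulayness for full systems of parameters; the case $r = d - 1$, by contrast, is fully self-contained within the excerpt, turning on the simple fact that the first syzygy of $M$ drops the projective dimension by one and so brings $K$ within reach of Proposition~\ref{amenablearereg}.
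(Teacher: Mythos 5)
Your argument is correct and follows essentially the same route as the paper: part (1) is the direct application of Proposition~\ref{amenablearereg}, and part (2) is split exactly as in the paper into $r=d$ (reduce to $\rmj_1=\e_1$ and invoke the $\e_1$-characterization of Cohen--Macaulayness for unmixed modules from \cite{chern5}) and $r=d-1$ (pass to the first syzygy $K$ with $\pd K=1$, apply Proposition~\ref{amenablearereg} to kill $\H_i(\xx;M)$ for $i\ge 2$, and read off $\rmj_1(\xx;M)=-\lambda(\H_1(\xx;M))<0$ from Corollary~\ref{jdseqCor}). Your added justifications (Auslander--Buchsbaum for $\depth M=d-2$, and the observation that $\H_1(\xx;M)\ne(0)$ because $\xx$ cannot be $M$-regular) make explicit what the paper leaves implicit.
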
 

\begin{proof} (1) This follows from Proposition \ref{amenablearereg}.

(2) If
$r=d$, then $\rmj_1(\xx;M) = \e_1(\xx;M)$ which cannot vanish, since $M$ is unmixed but not Cohen-Macaulay (\cite[Theorem 3.1]{chern5}). Suppose that $r=d-1$. Let $K$ be the module of syzygies of $M$:
\[ 0 \rar K \lar F \lar M \rar 0.\]
Since $\H_i(\xx;F)=(0)$, $i\geq 1$, we have $\H_i(\xx;K) = \H_{i+1}(\xx;M)$ for $i\geq 1.$
This means that $\xx$ is amenable for $K$. By Proposition \ref{amenablearereg} $\xx$ is a regular sequence on $K$. In particular, $\H_{i+1}(\xx;M)=0$ for all $i \geq 1$. 
Since $\depth M = d-2$, it follows that 
$\H_{1}(\xx;M)$ is the only non-vanishing Koszul homology module.
By Corollary~\ref{jdseqCor} (1) we obtain $\rmj_1(\xx;M) = -\lambda(\H_{1}(\xx;M)) \neq 0$.  
\end{proof}

Let us give another application.

\begin{Corollary} Let $R$ be  a Noetherian local ring and $M$  a Buchsbaum $R$-module of dimension $d > 0$. Let $0 < r \le d$ be a positive integer. Assume that $\dim \H_{(\xx)}(M) = s$ and $\rmj_1(\xx;M)=0$ for every
partial system $\xx = x_1, \ldots, x_s$ of parameters of $M$ of length $s$, $0 < s\leq r$. Then $M$ satisfies the condition
$(\mathrm{S}_r)$ of Serre.
\end{Corollary}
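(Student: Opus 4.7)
The plan is to show $\rmH^i_\m(M)=0$ for every $0\le i\le r-1$. Since a Buchsbaum module satisfies FLC, $M_\p$ is Cohen-Macaulay for every $\p\in\Supp M$ with $\p\ne\m$, so $(\mathrm{S}_r)$ is automatic at such primes; while at $\m$ the vanishing gives $\depth M\ge r=\min(r,d)$, yielding $(\mathrm{S}_r)$ globally.

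For a Buchsbaum module $M$, every partial system of parameters $\xx=x_1,\ldots,x_s$ is an amenable $d$-sequence relative to $M$: it extends to a full system of parameters, which is a strong $d$-sequence by standard Buchsbaum theory and whose $d$-sequence property descends to initial segments, and amenability comes from FLC via Proposition~\ref{amproperseq}(3). Corollary~\ref{jdseqCor}(1) therefore yields
\[
\rmj_1(\xx;M) \;=\; \sum_{k=1}^{s}(-1)^{k}\,k\,\lambda(\rmH_k(\xx;M)).
\]
I combine this with the Stückrad--Vogel-type formula
\[
\lambda(\rmH_k(\xx;M)) \;=\; \sum_{j=0}^{s-k}\binom{s-j}{k}\,\lambda(\rmH^j_\m(M)) \qquad (1\le k\le s),
\]
valid for any Buchsbaum $M$ and any partial s.o.p.\ of length $s$, which is proved by induction on $s$ from the Koszul long exact sequence for $\xx'=x_1,\ldots,x_{s-1}$ versus $\xx$: since $x_s$ annihilates $\rmH_k(\xx';M)$ for every $k\ge 1$ in the Buchsbaum setting, this long sequence splits into short exact sequences $0\to \rmH_k(\xx';M)\to \rmH_k(\xx;M)\to \rmH_{k-1}(\xx';M)\to 0$ for $k\ge 2$, and Pascal's rule $\binom{s-j}{k}=\binom{s-j-1}{k}+\binom{s-j-1}{k-1}$ propagates the binomial coefficients from $s-1$ to $s$.

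Substituting into $\rmj_1$ and interchanging summations,
\[
\rmj_1(\xx;M) \;=\; \sum_{j=0}^{s-1}\lambda(\rmH^j_\m(M))\sum_{k=1}^{s-j}(-1)^{k}k\binom{s-j}{k} \;=\; -\lambda(\rmH^{s-1}_\m(M)),
\]
the inner sum being $-\delta_{s-j,1}$ by differentiating $(1-t)^{m}$ at $t=1$. By hypothesis $\rmj_1(\xx;M)=0$ for every partial s.o.p.\ of length $s$ with $1\le s\le r$, so $\rmH^{s-1}_\m(M)=0$ for each such $s$, which is exactly $\rmH^i_\m(M)=0$ for $0\le i\le r-1$, completing the proof. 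The main obstacle is rigorously establishing the Stückrad--Vogel formula with the precise binomial coefficients displayed above (including the delicate $k=1$ case, where the recursion involves $(\xx' M:_M x_s)/\xx' M$ rather than just the Koszul homology of $\xx'$); once this formula is in hand, the combinatorial reduction to $-\lambda(\rmH^{s-1}_\m(M))$ and the passage to $(\mathrm{S}_r)$ are both short.
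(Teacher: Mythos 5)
Your strategy---reduce $(\mathrm{S}_r)$ to the vanishing of $\rmH^i_\m(M)$ for $0\le i\le r-1$, and extract that vanishing from a closed formula for $\rmj_1(\xx;M)$ in terms of the $h^j(M)$---is genuinely different from the paper's proof (which is a short induction on $r$: the inductive hypothesis makes every length-$(r-1)$ partial system of parameters $M$-regular, hence $\rmH_i(\xx;M)=0$ for $i\ge 2$, and then Corollary~\ref{jdseqCor} gives $\rmj_1(\xx;M)=-\lambda(\rmH_1(\xx;M))$). Unfortunately your key computational input is false. For a Buchsbaum module and a partial system of parameters of length $s$ the correct identity is
\[
\lambda(\rmH_k(\xx;M))\;=\;\sum_{j=0}^{s-k}\binom{s}{k+j}\,h^j(M),
\]
not $\sum_{j=0}^{s-k}\binom{s-j}{k}h^j(M)$; it follows from Remark~\ref{jformula} combined with $h^0(M/I_kM)=\sum_{j=0}^{k}\binom{k}{j}h^j(M)$ (\cite[Lemma 4.14]{SV}) and a Vandermonde convolution. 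Concretely, take $S$ regular local of dimension $4$, $M=\fkm_S$ (Buchsbaum, with $h^1(M)=1$ and $h^0(M)=h^2(M)=h^3(M)=0$), and $\xx=x_1,x_2,x_3$ part of a regular system of parameters: then $\rmH_1(\xx;M)\cong \rmH_2(\xx;k)\cong k^{3}$ has length $3=\binom{3}{2}h^1(M)$, while your formula predicts $\binom{2}{1}h^1(M)=2$. The error enters exactly at the step you flag as delicate: the cokernel in the $k=1$ recursion is $(0):_{M/\xx'M}x_s=\rmH^0_\m(M/\xx'M)$, of length $\sum_{j=0}^{s-1}\binom{s-1}{j}h^j(M)$ rather than $\sum_j h^j(M)$, and this keeps the upper entry of the binomial at $s$ instead of letting it drop to $s-j$. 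As a result your final identity $\rmj_1(\xx;M)=-h^{s-1}(M)$ is also false: in the example above it gives $0$, whereas $\rmj_1(\xx;M)=-1\cdot 3+2\cdot 1=-1$.

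The approach is nevertheless salvageable, because the corrected identity (which is exactly the paper's Proposition~\ref{4.6FLC} with $i=1$) reads $-\rmj_1(\xx;M)=\sum_{j=1}^{s-1}\binom{s-2}{j-1}h^j(M)$ for $s\ge 2$ and $-\rmj_1(\xx;M)=h^0(M)$ for $s=1$. All coefficients are positive, so $\rmj_1(\xx;M)=0$ for a single length-$s$ system already forces $h^1(M)=\cdots=h^{s-1}(M)=0$, and $s=1$ gives $h^0(M)=0$; running over $s\le r$ yields $\depth M\ge r$, and your concluding reduction of $(\mathrm{S}_r)$ to depth at $\m$ plus Cohen--Macaulayness on the punctured spectrum is fine, as is the observation that every partial system of parameters of a Buchsbaum module is an amenable $d$-sequence. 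So the gap is not in the architecture but in the stated Stückrad--Vogel formula; as written, the proof rests on an identity that is wrong for every $s\ge 3$.
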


\begin{proof}
 We argue by induction on $r$. We may assume $r>1$ by Corollary \ref{jdseqCor} (1). By induction
on the case $r-1$, we have $\H_i(\xx;M) = 0$ for $i>1$, so  that $\rmj_i(\xx;M) = 0 $ for $i>1$. Therefore, if
$\rmj_1(\xx;M)=0$ as well, then by Corollary \ref{jdseqCor} (2) $\xx$ is an $M$-regular 
sequence.
  \end{proof}

\section{Local cohomology: Additional formulas for $\mathbf{j}$-coefficients}

Let $(R,\m)$ be a Noetherian local ring and let $M$ be a finite $R$-module of dimension $d > 0$. Throughout this section, let $\xx= x_1, \ldots, x_r$ ($0 \le r \le d$) be an amenable partial system of parameters of $M$ that is a $d$-sequence relative to $M$. We set $I = (\xx)$ and $\rmG = \gr_I(R)$.   Notice that when $r > 0$, the sequence $\xx'= x_2, \ldots, x_r$ is naturally an amenable partial system of parameters of $M/x_1M$ that is a $d$-sequence relative to $M/x_1M$.  Let $I_i = (x_1, \ldots, x_i)$ for each $0 \le i \le r$.

We begin with the following.

\begin{Lemma}\label{4.4} The following assertions hold true.
\begin{enumerate}
\item[$(1)$] Suppose that $r > 0$ and let $\overline{M} = M/x_1M$. Then there is a long exact sequence
\begin{eqnarray*}
0\to\rmH_\m^0(M)  \to  \rmH_\m^0(\overline{M}) &\to& \rmH_\m^1(M) \overset{x_1}{\to} \rmH_\m^1(M) \to \rmH_\m^1(\overline{M})  \\
&\to&  \rmH_\m^2(M) \overset{x_1}{\to} \rmH_\m^2(M) \to  \rmH_\m^2(\overline{M}) \to \cdots \\
&\to& \rmH_\m^i(M) \overset{x_1}{\to} \rmH_\m^{i}(M) \to  \rmH_\m^{i}(\overline{M}) \to \cdots
\end{eqnarray*}
of local cohomology modules.
\item[$(2)$]
Let $1 \le p \le r$ be an integer. Then $h^0(M/I_{p-1}M)= 0$ if and only if $x_1, \ldots, x_p$ is an $M$-regular sequence.
\end{enumerate}
\end{Lemma}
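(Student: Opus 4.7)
My plan for (1) is to decompose the natural four-term sequence
\[
0 \to (0 :_M x_1) \to M \overset{x_1}{\to} M \to \overline{M} \to 0
\]
into the two short exact sequences $0 \to (0 :_M x_1) \to M \to x_1 M \to 0$ and $0 \to x_1 M \to M \to \overline{M} \to 0$, and then patch together the resulting long exact sequences of local cohomology. Amenability of $\xx$ forces $(0 :_M x_1)$ to have finite length, so it is concentrated in degree zero cohomology; this yields $\rmH_\m^i(x_1 M) \cong \rmH_\m^i(M)$ for $i \ge 1$ and a surjection $\rmH_\m^0(M) \twoheadrightarrow \rmH_\m^0(x_1 M)$. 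Plugging these isomorphisms into the LES of the second SES turns each connecting map $\rmH_\m^i(M) \to \rmH_\m^i(M)$ into multiplication by $x_1$, producing all but the first three terms of the asserted sequence.

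The one remaining point --- and the only place where the hypotheses genuinely bite --- is that the stated LES begins $0 \to \rmH_\m^0(M) \to \rmH_\m^0(\overline{M})$, so I need the map $\rmH_\m^0(x_1 M) \to \rmH_\m^0(M)$ induced by $x_1 M \hookrightarrow M$ to vanish; equivalently, $\rmH_\m^0(M) \cap x_1 M = 0$. Given $h = x_1 m \in \rmH_\m^0(M)$, one has $x_1^k h = 0$ for some $k$, so $m \in (0 :_M x_1^{k+1})$, and the $d$-sequence property $(0 :_M x_1^{k+1}) = (0 :_M x_1)$ yields $h = x_1 m = 0$.

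For (2), one direction is immediate: if $x_1, \ldots, x_p$ is $M$-regular then $x_p$ is a non-zero-divisor on $M/I_{p-1}M$, hence $\rmH_\m^0(M/I_{p-1}M) = 0$. For the converse I induct on $p$. Iterating the injection $\rmH_\m^0(M) \hookrightarrow \rmH_\m^0(M/x_1 M)$ from (1) --- which remains applicable at each stage because the truncated sequence $x_{i+1}, \ldots, x_p$ is again an amenable $d$-sequence and a partial system of parameters for $M/I_i M$ --- yields a chain
\[
\rmH_\m^0(M) \hookrightarrow \rmH_\m^0(M/I_1 M) \hookrightarrow \cdots \hookrightarrow \rmH_\m^0(M/I_{p-1}M) = 0.
\]
Hence $\rmH_\m^0(M) = 0$, and since $(0 :_M x_1) \subseteq \rmH_\m^0(M)$ by amenability, $x_1$ is $M$-regular. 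Passing to $\overline{M} = M/x_1 M$ with the tail $x_2, \ldots, x_p$ of length $p-1$, which inherits the vanishing hypothesis $\rmH_\m^0(\overline{M}/(x_2,\ldots,x_{p-1})\overline{M}) = \rmH_\m^0(M/I_{p-1}M) = 0$, the induction hypothesis delivers the remaining regularity of $x_2, \ldots, x_p$ on $\overline{M}$.

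I expect the main subtlety is the injectivity of $\rmH_\m^0(M) \to \rmH_\m^0(\overline{M})$ in (1), since that is where amenability and the $d$-sequence property are truly combined; part (2) then reduces to a formal descent on $p$.
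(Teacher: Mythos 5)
Your argument is correct and is essentially the paper's: the authors' entire proof of (1) is the observation that $\rmH_\m^0(M) = (0):_M x_1 = (0):_M (\xx)$, which is precisely what your two short exact sequences establish (your $x_1M$ playing the role of $M/[(0):_M x_1]$), and your descent on $p$ in (2) is their induction, run as a chain of injections rather than one step at a time. The only point to tighten is that amenability by itself gives finite length of $\H_r(\xx;M) = (0):_M(\xx)$, so the finiteness of $(0):_M x_1$ also uses the $d$-sequence identity $(0):_M x_1 = (0):_M(\xx)$, which you in any case invoke later.
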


\begin{proof} (1) This follows from the fact that $ \rmH^0_\fkm(M) =  (0):_Mx_1 = (0):_MI$.

(2) It suffices to check the {\it only if part}. Since $(0):_Mx_1 = \rmH_\fkm^0(M)$, we may assume that $p > 1$ and that the assertion holds true for $p-1$. We consider the exact sequence
\[ 0 \to \rmH_\fkm^0(M/I_{p-2}M) \to \rmH_\fkm^0(M/I_{p-1}M) \to \rmH_\fkm^1(M/I_{p-2}M) \overset{x_{p-1}}{\to} \rmH_\fkm^1(M/I_{p-2}M) \to \cdots\] given by assertion (1). Then since $h^0(M/I_{p-1}M)= 0$, we get $h^0(M/I_{p-2}M)= 0$, so that by the hypothesis of induction the sequence $x_1, \ldots, x_{p-1}$ is $M$-regular, while $x_{p}$ is $M/I_{p-1}M$-regular, as $(0):_{M/I_{p-1}M} x_{p} = \rmH^0_\fkm(M/I_{p-1}M) = (0)$.
\end{proof}

For each $R$-module $C$, we set  $\rmG(C) = \gr_{I}(C)$ and let $\rmH (C) =\rmH_\fkm^0(\rmG (C))$ denote the {$\mathbf{j}$}-transform of $C$ relative to $I$.

\begin{Proposition}\label{Hvanishing}
The  following conditions are equivalent.
\begin{enumerate}
\item[{\rm (1)}] $\H(M)=(0)$.
\item[{\rm (2)}] $\rmH_\fkm^0(M/IM) = (0)$.
\item[{\rm (3)}] $\depth M \geq r+1$.
\end{enumerate}
When this is the case, $r < d$ and $\xx$ is an $M$-regular sequence.
\end{Proposition}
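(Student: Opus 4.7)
The equivalence (1)$\iff$(2) is already contained in Corollary~\ref{vanishingofH}(1), since (2) is precisely the statement $\depth M/IM > 0$. The task therefore reduces to inserting (3) into this equivalence and deducing the closing assertions that $r < d$ and $\xx$ is $M$-regular. My strategy is to extract the $M$-regularity of $\xx$ from (2) directly via the two parts of Lemma~\ref{4.4}, and to handle the reverse implication by induction on $r$.

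For (2)$\Rightarrow$(3) together with the regularity of $\xx$, I would apply Lemma~\ref{4.4}(1) to the module $M/I_{r-1}M$ with the parameter $x_r$ (the tail of the sequence inherits the required $d$-sequence and amenability properties after quotienting by the initial segment). The resulting long exact sequence opens with an injection $\rmH_\fkm^0(M/I_{r-1}M) \hookrightarrow \rmH_\fkm^0(M/I_{r}M)$, whose target vanishes by (2). Hence $\rmH_\fkm^0(M/I_{r-1}M) = 0$, and Lemma~\ref{4.4}(2) with $p = r$ immediately gives that $\xx$ is $M$-regular. Combining $M$-regularity with $\depth M/IM \ge 1$ yields $\depth M = r + \depth M/IM \ge r + 1$, which is (3).

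For (3)$\Rightarrow$(2), I plan to induct on $r$, the case $r = 0$ being tautological. In the inductive step, $\depth M \ge r + 1 \ge 1$ forces $\rmH_\fkm^0(M) = 0$, and the amenable $d$-sequence hypothesis provides the key identification $(0):_M x_1 = \rmH_\fkm^0(M)$ (the very fact used in the proof of Lemma~\ref{4.4}(1)); in particular $x_1$ is $M$-regular. A chase through the long exact sequence of Lemma~\ref{4.4}(1), using the vanishing of $\rmH_\fkm^i(M)$ for $i \le r$, then produces $\rmH_\fkm^i(M/x_1M) = 0$ for $i \le r - 1$, i.e., $\depth M/x_1M \ge r$. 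Since $x_2, \ldots, x_r$ is an amenable partial system of parameters of $M/x_1M$ that is a $d$-sequence relative to it, the induction hypothesis applied to $M/x_1M$ yields $\rmH_\fkm^0(M/IM) = 0$.

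The closing clause follows at once: the $M$-regularity of $\xx$ gives $\dim M/IM = d - r$, and $\depth M/IM \ge 1$ from (2) forces $d - r \ge 1$. The only real bookkeeping obstacle is verifying that the three hypotheses (amenability, $d$-sequence relative to the module, partial system of parameters) propagate properly when passing from $M$ to $M/I_pM$ for $p < r$; this is routine given Proposition~\ref{amproperseq} and standard facts about $d$-sequences, and I do not expect any serious difficulty there.
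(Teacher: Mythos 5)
Your proof is correct, and its skeleton matches the paper's: both reduce (1) to (2) via the Section 3 structure results (the paper cites Theorem~\ref{14}, you cite Corollary~\ref{vanishingofH}(1); these amount to the same thing since $M/IM\neq(0)$), and both extract the $M$-regularity of $\xx$ from the vanishing of $\rmH_\fkm^0(M/IM)$. Where you genuinely diverge is in how that extraction is organized. The paper argues in one stroke: for each $i$ it identifies $[I_{i-1}M:_Mx_i]/I_{i-1}M$ with the submodule $[(I_{i-1}M:_Mx_i)+IM]/IM$ of $M/IM$ (using the $d$-sequence intersection property $(I_{i-1}M:_Mx_i)\cap IM=I_{i-1}M$ together with amenability to see this submodule is $\fkm$-torsion), so all the relevant colon modules land inside $\rmH_\fkm^0(M/IM)=(0)$ simultaneously. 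You instead obtain the single vanishing $h^0(M/I_{r-1}M)=0$ from the injection opening the long exact sequence of Lemma~\ref{4.4}(1) applied to $M/I_{r-1}M$ and $x_r$, and then invoke Lemma~\ref{4.4}(2) with $p=r$; this is a legitimate and more modular route whose only overhead is the (paper-sanctioned, see the top of Section 4) propagation of amenability and the $d$-sequence property to quotients. For (3)$\Rightarrow$(2) the paper says ``clear'' --- indeed $\depth M/IM\ge\depth M-r$ holds for any ideal generated by $r$ elements --- whereas your induction through Lemma~\ref{4.4}(1) proves the same thing using the special hypotheses on $\xx$; it works but is heavier than necessary. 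Finally, your derivation of $r<d$ from $1\le\depth M/IM\le\dim M/IM=d-r$ is a clean alternative to the paper's observation that $r=d$ would force $\H(M)=\rmG(M)\neq(0)$.
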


\begin{proof}(1) $\Leftrightarrow$ (2) See Theorem \ref{14}.

$(1) \Rightarrow (3)$ We have $r < d$, since $H \ne \rmG (M)$. On the other hand, $x_1, x_2 \ldots, x_r$ is an $M$-regular sequence, because 
\begin{eqnarray*}
[I_{i-1}M:_Mx_{i}]/I_{i-1}M &\cong& \left[I_{i-1}M :_M x_{i}+IM\right]/IM \\
&\subseteq& \rmH_\fkm^0(M/IM) = (0)
\end{eqnarray*}
for all $1 \le i \le r$ (remember that $\xx$ is a $d$-sequence relative to $M$ which is amenable for $M$). 
Therefore $x_1, x_2, \ldots, x_r$ is an $M$-regular sequence, whence   $\depth M > r$, as $\rmH_\fkm^0(M/IM) = (0)$.

$(3) \Rightarrow (2)$  This is clear.
\end{proof}

Let $W = \rmH_\m^0(M)$ and let  $\psi :\rmG (M) \to \rmG (M/W)$ be the canonical epimorphism of graded $G$-modules. We set $W^* = \Ker ~\psi$. Then since $W \cap IM = (0)$, we have $$W^*=\{w + IM \mid w \in W\} \subseteq M/IM = [\rmG (M)]_0,$$ so that $W^* = [W^*]_0 \cong W$ as an $R$-module.

\begin{Lemma}\label{ind1}
\begin{itemize}
\item[{\rm (1)}]  There is an exact sequence 
\[ 0 \to W^* \to \rmH (M) \overset{\varphi}{\to} \rmH (M/W) \to 0, \] 
of graded $\rmG$-modules, where $\varphi$ denotes the homomorphism induced from the canonical epimorphism $\psi: \rmG (M) \to \rmG (M/W)$.

\item[{\rm (2)}] Suppose  that $r > 0$ and let $\overline{M} = M/x_1M$. Then there is an exact sequence
$$0 \to W^*[-1] \to \rmH (M)[-1] \overset{f_1 } \to \rmH (M) \overset{\tau}{\to} \rmH(\overline{M}) \to 0$$
of graded $\rmG$-modules, where $\tau$ denotes the homomorphism induced from the canonical epimorphism $\sigma :\rmG (M) \to \rmG (\overline{M})$ and $f_{1}$ denotes the initial form of $x_{1}$.
\end{itemize}
\end{Lemma}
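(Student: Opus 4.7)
For (1), the plan is to apply the section functor $\rmH_\m^0(-)$ to the short exact sequence
\[
0 \to W^{*} \to \rmG(M) \overset{\psi}{\to} \rmG(M/W) \to 0
\]
of graded $\rmG$-modules. The hypothesis $W \cap IM = (0)$, noted in the paragraph preceding the lemma, forces $\ker \psi$ to be concentrated in degree $0$ and equal to $W^{*}\cong W$, hence of finite length. Therefore $\rmH_\m^0(W^{*})=W^{*}$ and $\rmH_\m^1(W^{*})=0$, and the long exact sequence of local cohomology yields the desired short exact sequence directly.

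For (2), I will first produce a four-term exact sequence of graded $\rmG$-modules
\[
0 \to K \to \rmG(M)[-1] \overset{f_{1}}{\to} \rmG(M) \overset{\sigma}{\to} \rmG(\overline{M}) \to 0,
\]
where $\sigma$ is induced by the canonical epimorphism $M \to \overline{M}$. Surjectivity of $\sigma$ is immediate, and exactness at $\rmG(M)$ follows from the standard $d$-sequence identity $x_{1}M \cap I^{n}M = x_{1}I^{n-1}M$ for $n\ge 1$ (cf.\ \cite[Proposition~2.2]{Hu1}). Using the companion identity $I^{n+1}M :_{M} x_{1} = I^{n}M + ((0):_{M}x_{1})$ available for $d$-sequences, the kernel $K$ is concentrated in degree $1$ and is isomorphic there to $(0):_{M}x_{1}/((0):_{M}x_{1}\cap IM)$. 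Combining the inclusion $W \subseteq (0):_{M}x_{1}$ (itself a consequence of the $d$-sequence relation $(0):_{M}x_{1}^{k}=(0):_{M}x_{1}$) with $W \cap IM = (0)$, and using amenability to control the finite-length contribution of $(0):_{M}x_{1}\cap IM$, one identifies $\rmH_\m^0(K) = W^{*}[-1]$.

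I then split the four-term sequence into the two short exact sequences $0 \to K \to \rmG(M)[-1] \to L \to 0$ and $0 \to L \to \rmG(M) \to \rmG(\overline{M}) \to 0$, apply $\rmH_\m^0(-)$, and use that $\rmH_\m^{\ge 1}(K)=0$ (since $K$ has finite length) to piece together the desired exact sequence up to the surjectivity of $\tau$. For the surjectivity, I will give a direct lifting argument via Theorem~\ref{14}: given $\eta \in \rmH(\overline{M})_{n}$, Theorem~\ref{14} applied to $\overline{M}$ writes $\eta = \sum_{\alpha} \overline{x^{\alpha} s_{\alpha}}$ with $|\alpha|=n$ and $\overline{s_{\alpha}} \in I\overline{M}:_{\overline{M}}\langle \m \rangle$; lifting each $s_{\alpha}$ to $M$ yields $\m^{k_{\alpha}}s_{\alpha} \subseteq IM + x_{1}M = IM$, so $h := \sum_{\alpha} x^{\alpha}s_{\alpha} \in I^{n}(IM:_{M}\langle \m \rangle)$, and Theorem~\ref{14} ensures $h + I^{n+1}M \in \rmH(M)_{n}$ with $\tau(h+I^{n+1}M)=\eta$. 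The main obstacle is the clean identification $\rmH_\m^0(K)=W^{*}[-1]$, which requires carefully combining the $d$-sequence identities with the amenability hypothesis to pin down the $\m$-torsion of $K_{1}$; once this is in place, the splitting of the four-term sequence and the Theorem~\ref{14}-based lifting make the remainder of the argument routine.
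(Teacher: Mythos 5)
Your proposal is correct and follows essentially the same route as the paper: part (1) is the identical application of $\rmH_\m^0(-)$ to $0 \to W^* \to \rmG(M) \to \rmG(M/W) \to 0$, and part (2) builds the same four-term sequence $0 \to W^*[-1] \to \rmG(M)[-1] \overset{f_1}{\to} \rmG(M) \to \rmG(\overline{M}) \to 0$ from the $d$-sequence identities and then applies local cohomology, invoking Theorem~\ref{14} for the surjectivity of $\tau$ (which the paper leaves implicit and you spell out as a lifting argument). The only point worth tightening is the identification of the kernel: you should note explicitly that amenability gives $(0):_M x_1 = (0):_M I = \rmH_r(\xx;M)$ of finite length, whence $(0):_M x_1 = W$ exactly (not merely $W \subseteq (0):_M x_1$), which is what makes $K_1 = W^*$.
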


\begin{proof} 
(1) Apply the functors $\rmH_\m^i(-)$ to the  exact sequence 
$$0 \to W^* \to \rmG (M) \overset{\psi}{\to} \rmG (M/W) \to 0$$ of graded $\rmG$-modules  and we get the exact sequence as asserted, since $\rmH_\fkm^0(W^*) = W^*$ and $\rmH_\m^1(W^*) = (0)$.

(2) Remember that $\rmG (M)/f_1 \rmG (M) \cong \rmG (\overline{M})$, since $x_1M \cap I^{n+1}M = x_1I^nM$ for all $n \in \Bbb Z$ and that $(0):_{\rmG (M)}f_1  = W^*$, since $[(0):_M{x_1}] \cap I^nM  = (0)$ for all $n >0$. We then have the exact sequence $$0 \to W^*[-1] \overset{\iota}{\to} \rmG (M)[-1] \overset{f_1 } \to \rmG (M) \overset{\sigma}{\to} \mathrm{\rmG}(\overline{M}) \to 0$$of graded $\rmG$-modules, and applying the functors $\rmH_\m^i(-)$ to the sequence, we get the required exact sequence, because  by Theorem \ref{14} the homomorphism $\tau : \rmH (M) \to \rmH (\overline{M})$ is surjective. 
\end{proof}

For an $R$-module $C$ and $i \in \Bbb Z$ we set $h^i(C) =\lambda (\rmH_\fkm^i(C))$. Then by Lemma \ref{ind1} an induction on $r$ gives the following description of the Hilbert series $[\![\H]\!]$ of $\rmH=\rmH(M)$. We note a brief proof. 

\begin{Theorem}\label{22} 
Let $\xx= x_1, \ldots, x_r$ {\rm(}$0 \leq r \le d${\rm)} be an amenable partial system of parameters of $M$ that is a $d$-sequence relative to $M$. We set $I = (\xx)$. Then the Hilbert series of the $\mathbf{j}$-transform $ \H=\H(M)$ is
\[ [\![\H]\!] =\frac{\ds h^0(M/IM) + \displaystyle \sum_{i=1}^{r}(-1)^i \left[\sum_{j=i}^rh^0(M/I_{r-j}M)\binom{j-1}{i-1}\right]\ttt^i}{\ds (1-\ttt)^{r}}.\]
\end{Theorem}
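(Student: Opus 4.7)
I would proceed by induction on $r$. The base case $r = 0$ is immediate: then $I = 0$, so $\rmH(M) = \rmH_\fkm^0(M)$ is concentrated in degree $0$ with length $h^0(M) = h^0(M/IM)$, and the (empty) inner sum on the right-hand side reduces the formula to exactly this.

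For the inductive step, the main tool is Lemma~\ref{ind1}\,(2), which supplies the short exact sequence
\[ 0 \to W^*[-1] \to \rmH(M)[-1] \overset{f_1}{\to} \rmH(M) \overset{\tau}{\to} \rmH(\overline{M}) \to 0 \]
of graded $\rmG$-modules, where $\overline{M} = M/x_1M$ and $W^* \cong W = \rmH_\fkm^0(M)$ is concentrated in degree $0$ with length $h^0(M)$. Since every graded piece of $\rmH$ has finite length, taking the alternating sum of Hilbert series yields the clean recursion
\[ (1-\ttt)\,[\![\rmH(M)]\!] \;=\; [\![\rmH(\overline{M})]\!] - \ttt \cdot h^0(M/I_0M), \]
where I use $I_0 = (0)$ so that $h^0(M) = h^0(M/I_0M)$.

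Next I invoke the inductive hypothesis for $\overline{M}$ with the partial system $x_2, \ldots, x_r$, which (as remarked at the start of Section~4) remains an amenable partial system of parameters of $\overline{M}$ that is a $d$-sequence relative to $\overline{M}$. Writing $I'_k = (x_2, \ldots, x_{k+1})$ for $0 \le k \le r-1$, the identification $\overline{M}/I'_k\overline{M} = M/I_{k+1}M$ reindexes the induction formula to
\[ [\![\rmH(\overline{M})]\!] = \frac{h^0(M/IM) + \displaystyle\sum_{i=1}^{r-1}(-1)^i\left[\sum_{j=i}^{r-1} h^0(M/I_{r-j}M)\binom{j-1}{i-1}\right]\ttt^i}{(1-\ttt)^{r-1}}. \]

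Finally I multiply the recursion by $(1-\ttt)^{r-1}$ to reach the common denominator $(1-\ttt)^r$, and expand the subtracted term via the binomial theorem as
\[ (1-\ttt)^{r-1}\,\ttt\,h^0(M/I_0M) \;=\; \sum_{i=1}^{r}(-1)^{i-1}\binom{r-1}{i-1}h^0(M/I_0M)\,\ttt^i. \]
Collecting coefficients of $\ttt^i$, the extra contribution $(-1)^i\binom{r-1}{i-1}h^0(M/I_0M)$ is precisely the $j=r$ summand of the bracketed sum $\sum_{j=i}^{r} h^0(M/I_{r-j}M)\binom{j-1}{i-1}$ in the target formula, and for $i = r$ it supplies the entire $\ttt^r$ coefficient. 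Thus the two expressions match term by term. The only genuine obstacle is the routine but slightly intricate bookkeeping with binomials and the correct reindexing $I'_k \leftrightarrow I_{k+1}$; the structural input — namely Lemma~\ref{ind1}\,(2) together with the fact that $W^*$ is concentrated in a single degree — is exactly what makes the induction go through cleanly.
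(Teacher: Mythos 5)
Your proposal is correct and follows essentially the same route as the paper: induction on $r$ via the four-term exact sequence of Lemma~\ref{ind1}\,(2), yielding the recursion $(1-\ttt)[\![\rmH(M)]\!] = [\![\rmH(\overline{M})]\!] - h^0(M)\,\ttt$, followed by the binomial expansion of $h^0(M)\,\ttt\,(1-\ttt)^{r-1}$ that supplies exactly the $j=r$ summand. The paper phrases this by solving for the numerator $Q(\ttt)$ rather than stating the recursion explicitly, but the computation is identical.
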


\begin{proof} We may assume that $r>0$ and the assertion holds true for $r-1$. Let $\overline{M}=M/x_{1}M$.
We write
\[ [\![\H]\!]  = \frac{ Q( \ttt) }{ (1-\ttt)^{r}}\]
with $Q( \ttt) \in \Bbb Z [\ttt]$. Then by the exact sequence
\[0 \to W^*[-1] \to \rmH (M)[-1] \to \rmH (M) \to \rmH(\overline{M}) \to 0\]
given in Lemma \ref{ind1} (2) and by the hypothesis of induction we get
\[ 0= h^{0}(M) \ttt - \frac{ Q(\ttt) \ttt}{(1-\ttt)^{r}} + \frac{ Q(\ttt) }{ (1-\ttt)^{r} } -   \frac{\ds h^0(\overline{M}/J \overline{M}) + \sum_{i=1}^{r-1}(-1)^i \left[\sum_{j=i}^{r-1} h^0( \overline{M}/J_{r-1-j} \overline{M})\binom{j-1}{i-1}\right]\ttt^i}{\ds (1-\ttt)^{r-1}},\]
where $J=(x_{2}, \ldots, x_{r})$ and $J_{r-1-j} = (x_{2}, \ldots, x_{r-j})$ for $1 \leq j \leq r-1$.
Since 
\[ M/IM \cong \overline{M}/J \overline{M}  \quad \mbox{\rm and} \quad  M/I_{r-j}M \cong   \overline{M}/J_{r-1-j} \overline{M} \;\; \mbox{\rm for} \;\; 1 \leq j \leq r-1, \] we obtain 
\[ \begin{array}{rcl}
Q(\ttt) &=& {\ds  h^{0}(M/IM)  + \sum_{i=1}^{r-1}(-1)^i \left[\sum_{j=i}^{r-1} h^0( M/I_{r-j} M)\binom{j-1}{i-1}\right]\ttt^i  - h^{0}(M)\ttt(1-\ttt)^{r-1}} \\
&& \\
&=& {\ds  h^{0}(M/IM)  + \sum_{i=1}^{r-1}(-1)^i \left[\sum_{j=i}^{r-1} h^0( M/I_{r-j} M)\binom{j-1}{i-1}\right]\ttt^i  + h^{0}(M) \sum_{i=1}^{r}(-1)^{i} \binom{r-1}{i-1}  \ttt^{i} }  \\ && \\
&=& {\ds h^{0}(M/IM)  + \sum_{i=1}^{r-1}(-1)^i \left[\sum_{j=i}^{r} h^0( M/I_{r-j} M)\binom{j-1}{i-1}\right]\ttt^i    + (-1)^{r} h^{0}(M) \ttt^{r}  }, \\ 
\end{array}\]
as claimed.
\end{proof}

\begin{Remark}\label{jformula}{\rm
Comparing Theorem \ref{22} with the formula  given by Corollary \ref{jdseqCor}, we have  $$\lambda(\rmH_i(\xx ;M))=\sum_{j=i}^r h^0(M/I_{r-j}M)\binom{j-1}{i-1}$$
 for all $1 \le i \le r$, which one can prove also by induction on $r$.
}\end{Remark}

We set 
\[ k_i( \xx;  M) = \left\{
\begin{array}{ll}
h^0\left(M/[(I_{r-i -1}M:_M{x_{r - i}})+ x_{r-i}M]\right) & \quad \mbox{if} \quad 0 \le i \le r-1,
\vspace{4mm}\\
h^0(M)& \quad \mathrm{if}  \quad i=r.
\end{array}
\right.\]  We then have the following.

\begin{Lemma}\label{4.6} $k_i (\xx; M) = h^0(M/I_{r-i}M) - h^0(M/I_{r-i - 1}M)$ for all $0 \le i \le r-1$.
Hence $k_{i}(\xx' ; \overline{M})= k_{i}(\xx; M)$ for all $0 \leq i \leq r-2$, where $\xx'=x_{2}, \ldots, x_{r}$ and $\overline{M}=M/x_{1}M$.
\end{Lemma}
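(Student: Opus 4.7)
I will set $j = r - i$ and introduce the shorthand $N = M/I_{j-1}M$, $y = x_j$. The point is that
\[
N/yN = M/I_j M, \qquad (0:_N y) = (I_{j-1}M :_M x_j)/I_{j-1}M,
\]
so that $k_i(\xx;M) = h^0\bigl(N/[(0:_N y) + yN]\bigr)$. The natural short exact sequence
\[
0 \to (0:_N y) \to N/yN \to N/[(0:_N y) + yN] \to 0,
\]
whose injectivity on the left comes from $(0:_N y) \cap yN = 0$ (a consequence of the d-sequence identity $(I_{j-1}M :_M x_j^2) = (I_{j-1}M :_M x_j)$), will produce the formula once I identify its leftmost term with $\rmH_\m^0(N)$.

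The core of the proof is thus establishing $(0:_N y) = \rmH_\m^0(N)$. Iterating the d-sequence identity yields $(I_{j-1}M :_M x_j^\ell) = (I_{j-1}M :_M x_j)$ for every $\ell \ge 1$; since $y \in \m$, this forces every $\m$-torsion element of $N$ to be killed by $y$ itself, giving $\rmH_\m^0(N) \subseteq (0:_N y)$. For the opposite inclusion I need $(0:_N y)$ to have finite length. Proposition~\ref{amproperseq} tells me that $\xx$ is a proper sequence, so $x_{k+1}$ annihilates $\rmH_p(x_1, \ldots, x_k; M)$ for all $p > 0$; feeding this into the standard Koszul long exact sequence for $x_{k+1}$ acting on $\rmH_\bullet(x_1, \ldots, x_k; M)$ produces injections $\rmH_p(x_1, \ldots, x_k; M) \hookrightarrow \rmH_p(x_1, \ldots, x_{k+1}; M)$ for every $p \ge 1$. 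Downward induction from $k = r$ (base case by amenability of $\xx$) gives finite length of each $\rmH_p(x_1, \ldots, x_k; M)$ with $1 \le k \le r$ and $p \ge 1$; in particular $(0:_N y)$, being a quotient of $\rmH_1(x_1, \ldots, x_j; M)$, has finite length.

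Once this identification is in place, $(0:_N y)$ has trivial $\rmH_\m^1$, so applying $\rmH_\m^0$ to the short exact sequence keeps it short exact, and counting lengths gives
\[
k_i(\xx;M) = h^0(M/I_j M) - h^0(N) = h^0(M/I_{r-i}M) - h^0(M/I_{r-i-1}M),
\]
which is the claimed formula. For the second assertion, set $I'_s = (x_2, \ldots, x_{s+1})$; then $\overline{M}/I'_s\overline{M} \cong M/I_{s+1}M$ canonically for all $0 \le s \le r-1$, and inserting this into the formula just proved for the pair $(\xx', \overline{M})$ collapses to $k_i(\xx'; \overline{M}) = h^0(M/I_{r-i}M) - h^0(M/I_{r-1-i}M) = k_i(\xx;M)$ for $0 \le i \le r-2$. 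The main obstacle is the finite-length step for $(0:_N y)$; once that is secured the rest is a formal manipulation of the short exact sequence and local cohomology.
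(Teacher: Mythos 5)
Your proof is correct and follows essentially the same route as the paper's: the paper likewise sets $L = M/I_{r-i-1}M$, forms the short exact sequence $0 \to \rmH^0_{\m}(L) \to L/x_{r-i}L \to L/[\rmH^0_{\m}(L) + x_{r-i}L] \to 0$ using the identification $\rmH^0_{\m}(L) = [I_{r-i-1}M :_M x_{r-i}]/I_{r-i-1}M$, and counts lengths after applying $\rmH^0_{\m}(-)$. You merely spell out in more detail the two ingredients the paper leaves implicit, namely the injectivity of the left-hand map via the $d$-sequence identity and the finite length of $(0:_L x_{r-i})$ via amenability and the Koszul long exact sequences.
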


\begin{proof} 
Set $L = M/I_{r - i - 1}M$. Then we get the short exact sequence 
$$0 \to \rmH^0_{\m} (L) \to L/ x_{r-i}L \to L/[\rmH^0_{\m} (L) + x_{r-i}L] \to 0.$$  Notice that
$\rmH^0_{\m} (L) = [I_{r - i-1}M:_M  x_{r-i} ]/I_{r - i-1}M$ 
and we have  $$ k_{i}(\xx ; M) = h^0\left(M/[I_{r-i -1}M:_M{x_{r - i}}+ x_{r-i}M]\right)= h^0(M/I_{r-i}M) - h^0(M/I_{r-i - 1}M),$$ because $h^0\left(M/(I_{r-i -1}M:_M{x_{r - i}}+ x_{r-i}M\right) = \lambda(L/[\rmH^0_{\m} (L) + x_{r-i}L])$. 
  \end{proof}

We now have the following formula of the $\mathbf{j}$-function of $M$.

\begin{Theorem}\label{19} 
Let $\xx= x_1, \ldots, x_r$ {\rm(}$0 \le r \le d${\rm)} be an amenable partial system of parameters of $M$ that is a $d$-sequence relative to $M$. 
Then the $\mathbf{j}$-function of $M$ relative to $I=(\xx)$ is
\[ \psi^{M}_{I} (n) = \sum_{i=0}^{r}k_{i}(\xx; M)\binom{n+r-i}{r-i}\] for $n \ge 0$. Therefore $(-1)^{i} \rmj_{i}(\xx; M)= k_{i}(\xx;M)$ for all $0 \leq i \leq r$.
\end{Theorem}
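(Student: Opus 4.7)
The plan is to combine Theorem~\ref{22} (which gives the Hilbert series of $\rmH$) with Lemma~\ref{4.6} (which writes $k_i(\xx;M)$ as a first difference of $h^0(M/I_\ell M)$'s) by a short generating-function calculation. Write $A_\ell = h^0(M/I_\ell M)$ for $0 \le \ell \le r$, so that Lemma~\ref{4.6} reads $k_i(\xx;M) = A_{r-i} - A_{r-i-1}$ for $0 \le i \le r-1$ and $k_r(\xx;M) = A_0$. The $\mathbf{j}$-function $\psi^M_I(n) = \sum_{k \le n}\lambda(\rmH_k)$ is the partial-sum transform of the Hilbert function of $\rmH$, so its generating series is $[\![\rmH]\!]/(1-\ttt) = Q(\ttt)/(1-\ttt)^{r+1}$, where $Q(\ttt)$ is the numerator in Theorem~\ref{22}.

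The core step is the purely algebraic identity
\[
Q(\ttt) \;=\; \sum_{i=0}^{r} k_i(\xx;M)\,(1-\ttt)^i.
\]
Once this is established, substituting the geometric expansion $(1-\ttt)^{-(r+1-i)} = \sum_{n\ge 0}\binom{n+r-i}{r-i}\ttt^n$ gives
\[
\psi^M_I(n) \;=\; \sum_{i=0}^{r} k_i(\xx;M)\binom{n+r-i}{r-i} \qquad (n\ge 0),
\]
valid for \emph{all} $n \ge 0$ (consistency at $n=0$ checks: $\sum_i k_i = A_r = h^0(M/IM) = \lambda(\rmH_0)$). Matching this against the definition $\psi^M_I(n) = \sum_{i=0}^r (-1)^i \rmj_i(\xx;M)\binom{n+r-i}{r-i}$ of the $\mathbf{j}$-polynomial yields the asserted equality $(-1)^i\rmj_i(\xx;M) = k_i(\xx;M)$.

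To prove the identity, I would apply summation by parts to the right-hand side. Using $k_i = A_{r-i} - A_{r-i-1}$ and $(1-\ttt)^{i+1} - (1-\ttt)^i = -\ttt(1-\ttt)^i$, a telescoping computation collapses $\sum_{i=0}^{r-1}(A_{r-i}-A_{r-i-1})(1-\ttt)^i + A_0(1-\ttt)^r$ to
\[
A_r \;-\; \ttt\sum_{\ell=0}^{r-1} A_{r-\ell-1}(1-\ttt)^\ell.
\]
Expanding $(1-\ttt)^\ell$ by the binomial theorem, reindexing $j = \ell+1$, and extracting the coefficient of $\ttt^i$ gives exactly $(-1)^i\sum_{j=i}^r A_{r-j}\binom{j-1}{i-1}$, which is the coefficient of $\ttt^i$ in $Q(\ttt)$ prescribed by Theorem~\ref{22}.

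A parallel route available as a sanity check is induction on $r$ using Lemma~\ref{ind1}(2) and the stability statement $k_i(\xx';\overline M) = k_i(\xx;M)$ for $0\le i\le r-2$ recorded in Lemma~\ref{4.6}, mirroring the inductive proof of Theorem~\ref{22}. The only obstacle is combinatorial: keeping the binomial index shifts straight when identifying the telescoped right-hand side with the explicit coefficients of $Q(\ttt)$; the manipulation is routine but leaves little room for bookkeeping errors.
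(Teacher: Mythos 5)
Your proposal is correct, but it takes a genuinely different route from the paper. The paper proves Theorem~\ref{19} by a direct induction on $r$: it writes $\psi^M_I(n)=\sum_{\ell\le n}h^0(I^\ell M/I^{\ell+1}M)$, uses the exact sequence of Lemma~\ref{ind1}\,(2) to pass to $\overline M=M/x_1M$, and then telescopes the partial sums of $\psi^{\overline M}_{(\xx')}$ together with the stability $k_i(\xx';\overline M)=k_i(\xx;M)$ from Lemma~\ref{4.6} and the hockey-stick identity $\sum_{\ell=0}^n\binom{\ell+r-i-1}{r-i-1}=\binom{n+r-i}{r-i}$. You instead take Theorem~\ref{22} as the starting point and reduce everything to the formal identity $Q(\ttt)=\sum_{i=0}^r k_i(\xx;M)(1-\ttt)^i$, i.e., to re-expressing the numerator of the Hilbert series in the basis $\{(1-\ttt)^i\}$; I checked your telescoping and reindexing and the coefficient of $\ttt^i$ does come out to $(-1)^i\sum_{j=i}^r h^0(M/I_{r-j}M)\binom{j-1}{i-1}$, matching Theorem~\ref{22}, and the expansion of $Q(\ttt)/(1-\ttt)^{r+1}$ then gives the stated formula for all $n\ge 0$, whence $(-1)^i\rmj_i=k_i$ by uniqueness of coefficients in the binomial basis. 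Your route buys conceptual clarity: it exhibits Theorem~\ref{19} as nothing more than the standard change of basis between the $h$-vector of $\rmH$ and its Hilbert coefficients, so the $k_i$ are visibly the signed $\mathbf{j}$-coefficients, and no new induction is needed beyond what Theorem~\ref{22} already encapsulates. The paper's induction buys independence from the Hilbert-series formalism and keeps the argument entirely at the level of the numerical function $\psi^M_I$, which is perhaps why the authors present both theorems with parallel self-contained proofs. One small caveat: your argument leans on Theorem~\ref{22} and Lemma~\ref{4.6}, both of which precede Theorem~\ref{19} in the paper and do not depend on it, so there is no circularity.
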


\begin{proof} 
If $r=0$, then the $\mathbf{j}$-function of $M$ is $\psi^{M}_{(0)}(n) = \sum_{\ell=0}^n h^{0} (I^{\ell} M/I^{\ell+1}M) = h^0(M)$ for $n \geq 0$. Assume that $r > 0$ and the assertion holds true for $r-1$. Let $\overline{M}=M/x_{1}M$ and $\xx'=x_{2}, \ldots, x_{r}$. Then by Lemma \ref{ind1}, the hypothesis of induction, and Lemma \ref{4.6} we get
\[ \begin{array}{rcl}
\psi^{M}_{I}(n) &=& {\ds  \sum_{\ell =0}^n  h^{0} (I^{\ell} M/I^{\ell +1}M) } \\ && \\
                           &=& {\ds h^{0}(M/IM) +  \sum_{\ell=1}^n  h^{0} (I^{\ell} M/I^{\ell+1}M) } \\ && \\
                           &=& {\ds \psi^{\overline{M}}_{(\xx')}(0) + \sum_{\ell=1}^{n}  \left(  \psi^{\overline{M}}_{(\xx')}(\ell) - h^{0}(M) \right) } \\ && \\
                           &=& {\ds  \sum_{\ell=0}^{n} \left(  \psi^{\overline{M}}_{(\xx')}(\ell) \right) - n \cdot h^{0}(M)       } \\ && \\
                           &=& {\ds \sum_{\ell=0}^{n} \left( \sum_{i = 0}^{r-2}k_i(\xx'; \overline{M})\binom{\ell+r-1-i}{r-1-i} +   k_{r-1}(\xx'; \overline{M})    \right) - n \cdot h^{0}(M) } \\ && \\
                           &=& {\ds  \sum_{i = 0}^{r-2} k_i(\xx'; \overline{M})
\left( \sum_{\ell=0}^{n} \binom{\ell+r-i-1}{r-i-1} \right) + (n+1) h^{0}(\overline{M})   - n \cdot h^{0}(M) } \\ && \\
&=& {\ds  \sum_{i = 0}^{r-2} k_i(\xx'; \overline{M}) \binom{n + r - i}{r - i} + (n+1)( h^{0}(\overline{M}) - h^{0}(M) ) + h^{0}(M) } \\ && \\
&=& {\ds \sum_{i = 0}^{r-2} k_i(\xx; M) \binom{n + r - i}{r - i} + k_{r-1}(\xx; M)  \binom{n+1}{1} + k_{r}(\xx; M),} \\
 \end{array} \]
as wanted.
\end{proof}

\begin{Corollary}\label{20} Suppose that $r > 0$. Then 
\[ - \rmj_1(\xx ; M) = \left\{
\begin{array}{ll}
h^0(M) & \quad \mathrm{if} \quad r = 1,
\vspace{4mm}\\
k_1(\xx; M) = h^0\left(M/[(I_{r-2}M:_M{x_{r - 1}})+ x_{r-1}M]\right)& \quad \mathrm{if}  \quad r > 1,
\end{array}
\right.\]
whence $\rmj_1(\xx,M) \le 0$.

\end{Corollary}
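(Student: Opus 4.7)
The statement is a direct corollary of Theorem \ref{19}. My plan is to simply extract the value of $-\rmj_1(\xx;M)$ from the identity $(-1)^{i}\rmj_i(\xx;M)=k_i(\xx;M)$ (for $0\le i\le r$) established there, and then unpack the definition of $k_1(\xx;M)$ according to the two cases of the piecewise definition preceding Lemma \ref{4.6}.

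First I would set $i=1$ in Theorem \ref{19} to obtain $-\rmj_1(\xx;M) = k_1(\xx;M)$. Next I split into cases based on the position of $i=1$ relative to the boundary $r-1$ in the definition of $k_i(\xx;M)$. If $r=1$, then $i=1=r$ falls in the second branch of the definition, so $k_1(\xx;M)=h^0(M)$. If $r>1$, then $1\le r-1$, so $i=1$ falls in the first branch, giving
\[
k_1(\xx;M) = h^0\!\left(M/[(I_{r-2}M:_M x_{r-1}) + x_{r-1}M]\right),
\]
which is exactly the stated formula. Combining these two cases yields the claimed piecewise expression for $-\rmj_1(\xx;M)$.

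For the final inequality $\rmj_1(\xx;M)\le 0$, I would simply observe that $k_1(\xx;M)$ is the length of a local cohomology module, hence a non-negative integer; therefore $-\rmj_1(\xx;M)=k_1(\xx;M)\ge 0$, i.e.\ $\rmj_1(\xx;M)\le 0$. There is really no main obstacle here: all the substantive work — the acyclicity of the approximation complex, the inductive derivation of the $\mathbf{j}$-function in terms of the $k_i(\xx;M)$, and the identification of the $k_i$ with differences of $h^0(M/I_{r-j}M)$ via Lemma \ref{4.6} — has already been carried out in the preceding results. The corollary is a bookkeeping consequence specializing Theorem \ref{19} to $i=1$.
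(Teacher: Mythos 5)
Your proposal is correct and matches the paper's (implicit) argument exactly: the paper states this corollary without a separate proof precisely because it is the specialization $i=1$ of the identity $(-1)^i\rmj_i(\xx;M)=k_i(\xx;M)$ from Theorem~\ref{19}, combined with the two branches of the definition of $k_i(\xx;M)$, and the inequality $\rmj_1(\xx;M)\le 0$ follows since $k_1(\xx;M)$ is the (finite) length of a zeroth local cohomology module of a finitely generated module. Nothing is missing.
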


\begin{Corollary}\label{21} Let  $r >0$. Then the  following assertions hold true.\begin{enumerate}
\item[{\rm (1)}] Suppose $r = 1$. Then $\rmj_1(\xx ; M) = 0$ if and only if $\depth M >0$.
\item[{\rm (2)}] Suppose $r >1$. Then $\rmj_1(\xx ; M)=0$ if and only if 
\[ \rmH_{\m}^0\left(  M/ [   (I_{r-2}M:_{M} x_{r-1})+x_{r-1}M ] \right) =(0).\]
\end{enumerate}
Therefore, if $\xx=x_1, \ldots, x_r$ is an $M$-regular sequence,  then $\rmj_1(\xx ; M) = 0$.  
\end{Corollary}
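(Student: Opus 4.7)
The plan is to derive Corollary~\ref{21} as an essentially immediate consequence of the explicit formula for $-\rmj_1(\xx;M)$ recorded in Corollary~\ref{20}. The key observation is that $\rmj_1(\xx;M) = 0$ is equivalent to the vanishing of the length $h^0(\,-\,)$ appearing in that formula, and a length-zero local cohomology module is the zero module.

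First, for part (1), I would simply note that by Corollary~\ref{20} we have $-\rmj_1(\xx;M) = h^0(M) = \lambda(\rmH_\m^0(M))$, so $\rmj_1(\xx;M) = 0$ if and only if $\rmH_\m^0(M) = (0)$, which (since $M \neq 0$, as $\dim M = d > 0$) is equivalent to $\depth M > 0$. For part (2), the same corollary yields
\[
-\rmj_1(\xx;M) \;=\; k_1(\xx;M) \;=\; h^0\!\left(M/[(I_{r-2}M:_M x_{r-1})+x_{r-1}M]\right),
\]
and again this is zero if and only if the stated local cohomology module vanishes.

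For the final assertion, assume $\xx = x_1, \ldots, x_r$ is an $M$-regular sequence. If $r = 1$, then $x_1$ is $M$-regular, so $\rmH_\m^0(M) = (0):_M x_1 = (0)$, giving $\depth M > 0$ and hence $\rmj_1(\xx;M) = 0$ by part (1). If $r > 1$, then $x_1, \ldots, x_{r-1}$ being $M$-regular yields $I_{r-2}M :_M x_{r-1} = I_{r-2}M$, so
\[
M/[(I_{r-2}M:_M x_{r-1})+x_{r-1}M] \;=\; M/I_{r-1}M,
\]
and since $x_r$ is a non-zero-divisor on $M/I_{r-1}M$ with $x_r \in \m$, we conclude $\rmH_\m^0(M/I_{r-1}M) = (0)$. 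Part (2) then gives $\rmj_1(\xx;M) = 0$.

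There is no real obstacle here: the whole work was done in Lemma~\ref{4.6}, Theorem~\ref{19} and Corollary~\ref{20}. The only point that deserves a sentence of care is the translation from $I_{r-2}M:_M x_{r-1} = I_{r-2}M$ in the regular-sequence case, which follows directly from the definition of regularity of the subsequence $x_1, \ldots, x_{r-1}$.
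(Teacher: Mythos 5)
Your proof is correct and follows exactly the route the paper intends: Corollary~\ref{21} is stated there without a separate proof precisely because it is the immediate translation of the length formula in Corollary~\ref{20} (itself a consequence of Theorem~\ref{19} and Lemma~\ref{4.6}), together with the observation that a regular sequence forces $I_{r-2}M:_M x_{r-1}=I_{r-2}M$ and the vanishing of $\rmH_\m^0$ of the relevant quotient. Nothing is missing.
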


The following gives a partial answer to Conjecture \ref{j1is0}. 

\begin{Theorem}\label{21-1}  Let $\xx= x_1, \ldots, x_r$ be an amenable partial system of parameters of $M$ that is a $d$-sequence relative to $M$. Suppose that either {\rm (a)} $r \ge 2$ and $\depth M \ge r-1$ or that {\rm (b)} $r = 3$ and $M$ is unmixed. Then 
 $\rmj_1(\xx ; M) = 0$ if and only if $\xx=x_{1}, \ldots, x_{r}$ is an $M$-regular sequence.
\end{Theorem}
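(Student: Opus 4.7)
The ``if'' direction is established in Corollary \ref{21}. For the converse, assume $\rmj_1(\xx;M) = 0$. By Theorem \ref{19} and Lemma \ref{4.6},
\[
k_1(\xx;M) = h^0(M/I_{r-1}M) - h^0(M/I_{r-2}M) = 0,
\]
and Lemma \ref{4.4}(2) reduces both cases to proving $h^0(M/I_{r-2}M) = 0$.

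\textbf{Case (a).} The key step is an auxiliary claim: any amenable partial system of parameters $y_1,\ldots,y_s$ that is a $d$-sequence relative to $M$ is already $M$-regular whenever $\depth M \geq s$. I would prove this by induction on $s$: once $\depth M \geq 1$, amenability places the finite-length module $(0):_M y_1 = \H_1(y_1;M)$ inside $\H_\m^0(M) = 0$, so $y_1$ is $M$-regular; the inductive step passes to $M/y_1M$ via the isomorphism $\H_j(y_1,\ldots,y_s;M) \cong \H_j(y_2,\ldots,y_s;M/y_1M)$, which transfers both amenability (the finite-length property propagates to initial sub-Koszul homologies by Proposition \ref{amproperseq}(3)) and the $d$-sequence property. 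Applied with $s = r-1$, this yields $x_1,\ldots,x_{r-1}$ regular on $M$; consequently $\depth M/I_{r-2}M \geq 1$, so $h^0(M/I_{r-2}M) = 0$.

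\textbf{Case (b).} Here $r = 3$, so the target is $h^0(M/(x_1, x_2)M) = 0$. Unmixedness of $M$ immediately supplies that each $x_i$ is $M$-regular, $\H_\m^0(M) = 0$, and (since any prime containing $(\xx)$ has dimension at most $d-3 < d$) $(0):_M(\xx) = 0$. Writing $h_i = \lambda(\H_i(\xx;M))$, the last fact gives $h_3 = 0$; since $x_1$ is $M$-regular, $\H_2(\xx;M) \cong (0):_{M/x_1M}(x_2,x_3)$. Corollary \ref{jdseqCor} combined with $\rmj_1 = 0$ then yields $h_1 = 2h_2$, while Lemma \ref{4.6} together with Corollary \ref{jdseqCor} gives $\rmj_2 = h_2 = h^0(M/x_1M)$. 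Setting $N = M/x_1M$ and using that $\xx$ is proper (so $x_3 \H_1(x_1,x_2;M) = 0$, equivalently $(0):_N x_2 \subseteq (0):_N x_3$), one identifies
\[
\H_\m^0(N) = (0):_N x_2 = (0):_N(x_2,x_3) = \H_2(\xx;M),
\]
all of common length $h_2$ and supported at $\{\m\}$.

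The main obstacle is to rule out $\H_\m^0(N) \neq 0$. The plan is to take any nonzero $\bar\phi \in \H_\m^0(N)$ and lift it to $\phi \in M \setminus x_1M$ with $\m\phi \subseteq x_1M$; writing $x_2\phi = x_1m_2$ and $x_3\phi = x_1m_3$, the regularity of $x_1$ on $M$ yields the Koszul relation $x_3m_2 = x_2m_3$, so $(m_3, -m_2)$ represents a class in $\H_1(x_2,x_3;M)$ annihilated by $x_1$. A careful use of the remaining $d$-sequence relations $x_1M:_M x_2x_3 = x_1M:_M x_3$ and $(x_1,x_2)M:_M x_3^2 = (x_1,x_2)M:_M x_3$, combined with the unmixedness of $M$ (which forces every associated prime of $M$ to have dimension $d$, thereby constraining the $x_1$-torsion arising in the Koszul cycle), is then used to show that this cycle must already be a boundary, forcing $\phi \in x_1M$ and contradicting the choice of $\phi$. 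Hence $h_2 = 0 = h_1$, and Corollary \ref{jdseqCor}(2) delivers the $M$-regularity of $\xx$.
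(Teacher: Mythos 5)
Your reduction via $k_1(\xx;M)=h^0(M/I_{r-1}M)-h^0(M/I_{r-2}M)$ and Lemma \ref{4.6} is sound, and your Case (a) is essentially the paper's argument in a different packaging: the hypothesis $\depth M\ge r-1$ together with amenability forces $x_1,\ldots,x_{r-1}$ to be $M$-regular (the paper runs this as an induction on $r$ down to the case $r=2$; your auxiliary claim is a correct stand-alone version, and initial subsequences of an amenable $d$-sequence are indeed amenable $d$-sequences, though Proposition \ref{amproperseq}(3) as stated is for full systems of parameters, so you should justify the inheritance via the embeddings $\H_j(x_1,\ldots,x_i;M)\hookrightarrow\H_j(x_1,\ldots,x_{i+1};M)$ coming from properness).

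Case (b), however, has a genuine gap at its crux. All of your bookkeeping ($h_3=0$, $h_1=2h_2$, $h_2=h^0(M/x_1M)$, the identification of $\H_\m^0(N)$ with $(0):_N(x_2,x_3)$) is correct, but the decisive step --- that the Koszul cycle $(m_3,-m_2)$ attached to a socle element of $N=M/x_1M$ must be a boundary --- is only announced, not proved, and the ingredients you list for it cannot suffice: you invoke only the $d$-sequence colon identities and the equidimensionality of $\Ass M$, and nowhere in that final step do you use $\rmj_1(\xx;M)=0$. An argument from those ingredients alone would prove that \emph{every} amenable $d$-sequence partial system of parameters of length $3$ on an unmixed module is a regular sequence, which is false (take $M$ unmixed Buchsbaum of dimension $\ge 3$ with $\H_\m^1(M)\ne(0)$; a standard partial system of parameters of length $3$ is an amenable $d$-sequence with $-\rmj_1=h^1(M)+h^2(M)\ne 0$, and the corresponding cycles are genuinely non-bounding). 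The paper's route is different and shows where unmixedness actually enters: it sets $L=\overline{M}/\H_\m^0(\overline{M})$ with $\overline{M}=M/x_1M$, checks $\rmj_1(x_2,x_3;L)=0$, applies Case (a) to conclude $x_2,x_3$ is $L$-regular, hence $\H_\m^1(\overline{M})=(0)$, and then uses that $\H_\m^1(M)$ is \emph{finitely generated} because $M$ is unmixed, so that the exact sequence $\H_\m^1(M)\overset{x_1}{\to}\H_\m^1(M)\to\H_\m^1(\overline{M})=0$ of Lemma \ref{4.4}(1) kills $\H_\m^1(M)$ by Nakayama and yields $\depth M\ge 2$, reducing (b) to (a). Your plan omits this finiteness input entirely, and without it (or an equivalent use of $\rmj_1=0$ in the boundary argument) Case (b) does not close.
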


\begin{proof} {\bf Case (a).} Suppose that $r \ge 2$, $\depth M \ge r-1$, and $\rmj_1(\xx ; M) = 0$. If $r = 2$, then $\depth M \geq 1$, whence
$$h^{0}(M/x_{1}M) = k_{1}(\xx, M) = - \rmj_{1}(\xx, M) = 0$$
by Lemma \ref{4.6}, 
so that by Lemma \ref{4.4} (2)  $\xx=x_{1}, x_{2}$ is $M$-regular. Assume that $r \ge 3$ and the assertion holds true for $r-1$. Then $x_1$ is $M$-regular, because $(0) :_{M} x_{1}  = H^{0}_{\m}(M) =(0)$, while Lemma \ref{4.6} and Theorem \ref{19} show 
\[ \rmj_{1}( \xx' ; \overline{M} ) = - k_{1}(\xx' ; \overline{M}) = - k_{1}(\xx; M)= \rmj_{1}(\xx ; M) =0,\]
where $\xx' = x_2, \ldots, x_r$ and $\overline{M}=M/x_{1}M$. 
Hence the hypothesis of induction  shows $\xx'$ is $\overline{M}$-regular, so that $\xx$ is $M$-regular.

{\bf Case (b).} Suppose that $r = 3$ and $M$ is unmixed. By Case (a) it is enough to show that $\depth M \geq 2$. We set $\overline{M} = M/x_1M$ and $L= \overline{M}/ [(0):_{\overline{M}}x_2]$. 
 Then $x_2$ is $L$-regular and $\depth L \geq 1$. Let $\xx'=x_{2}, x_{3}$ and notice that  
\[ \begin{array}{rcl}
- \rmj_{1}( \xx' ; L ) &=& h^{0}(L/x_{2}L) - h^{0}(L) = h^{0}(L/x_{2}L) \\ && \\
&=& h^{0}( \overline{M}/( (0:_{\overline{M}} x_{2})+ x_{2} \overline{M} ) ) = - \rmj_{1}(\xx', \overline{M}) = - \rmj_{1}( \xx, M) =0.
\end{array}\]
Then by Case (a) $ \xx'= x_{2}, x_{3}$ is $L$-regular, whence $\H^{1}_{\m}(L)=(0)$, so that $\H^{1}_\fkm(\overline{M})=(0)$ (remember that $L = \overline{M}/\rmH_\fkm^0(\overline{M})$). Since $\rmH_\fkm^1(M)$ is finitely generated as $M$ is unmixed, we get $\rmH_\fkm^1(M)=(0)$ by Nakayama's lemma, thanks to the exact sequence
\[\rmH_\m^1(M) \overset{x_1}{\to} \rmH_\m^1(M) \to \rmH_\m^1(\overline{M}) =0\]
 given in  Lemma \ref{4.4} (1). Hence $\depth M \ge 2$. 
\end{proof}

The condition (a) that $\depth M \ge r-1$ in Theorem \ref{21-1} is not too much. Let us explore one example.

\begin{Example}{\rm
Let $S$ be a Buchsbaum local ring of dimension $3$ and $a,b,c$ a system of parameters of $S$. We assume $h^0(S) \ne 0$ but $h^1(S) = 0$. Let $R = S[[X, Y]]$ be the formal power series ring and set $x_1 = X, x_2 = Y, x_3 = a, x_4=b, x_5 = c$. Then $x_1, x_2, x_3, x_4, x_5$ is a system of parameters of $R$, forming a $d$-sequence. Let $\xx = x_1, x_2, x_3, x_4$. Since $$R/[(x_1,x_2):x_3 +x_3R] \cong S/[(0):a + aS],$$ we get 
\begin{eqnarray*}
h^0(R/[(x_1,x_2):x_3 +x_3R]) &=& h^0(S/[(0):a + aS]) \\
&=& h^0(S/(0):a) + h^1(S/(0):a)\\
&=& h^1(S) \\
&=& 0,
\end{eqnarray*}
so that  $j_1(\xx; R) = 0$, while  $\depth R = 2$. Notice that $R$ is a mixed local ring.

Let $I = (\xx)$ and set $\rm\rmH = \rmH_{I}(R)$. Then $$\rmH \cong \left[\overline{G}/(f_3, f_4)\overline{G}\right]^{\oplus h^0(S)} \oplus \overline{G}^{\oplus h^2(S)}$$ as a graded $\overline{G}$-module, where $\overline{G} = G/\fkm G$.   Therefore $\rmH \ne (0)$ and \[ \dim \rmH = \left\{
\begin{array}{ll}
4 & \quad \mbox{if} \quad h^2(S) > 0, 
\vspace{4mm}\\
2 & \quad \mbox{if}  \quad h^2(S) =0.
\end{array}
\right.\] 
}\end{Example}

\begin{proof} We have $\fkm H = (0)$, since $\fkm \rmH^0_\fkm(R/I) = (0)$ (remember that $R/I \cong S/(a,b)$ is a Buchsbaum local ring).
 Let $J = (0):_S\fkn$, where $\fkn$ is the maximal ideal of $S$. Then $J + I = JR + I \subseteq I :\fkm$. We also have $J \cap I \subseteq [(X, Y ) :_R a] \cap (X,Y, a, b) = (X,Y)$, so that $J \cap I = (0)$, because $J \cap (X,Y) = (0)$. Therefore $[JR + I]/I \cong J$ as an $S$-module, whence $\ell_R([JR + I]/I) = \ell_S(J) = h^0(S)$. Consequently, because $(f_3,f_4) \left[(JR+I)/I\right] = (0)$ in $\rmH$, we get a homomorphism
$$\varphi : \left[\overline{G}/(f_3, f_4)\overline{G}\right]^{\oplus h^0(S)} \oplus \overline{G}^{\oplus h^2(S)} \to \rmH$$
of graded $\overline{G}$-modules, which sends $(\mathbf{e}_i~\mathrm{mod}~ (f_3, f_4)\overline{G}, 0)$ to $v_j$ and $(0, \mathbf{e}_k)$ to $w_k$, where $\{v_j, w_k\}$ is a $k$-basis of $[I:_R\fkm]/I$ which extends a $k$-basis $\{v_j\}$ of $[JR + I]/I$. Then $\varphi$ is surjective, since $\rmH = \overline{G}{\cdot}\rmH_0$ and $\rmH_0 = [I:_R\fkm]/I$. Therefore $\varphi$ must be an isomorphism, as the Hilbert series of both sides are the same, which follows from Theorem \ref{19} (remember that $h^0(R) = h^0(R/I_1)=0, h^0(R/I_2) = h^0(R/I_3)= h^0(S)$, and $h^0(R/I)=h^0(S) + h^2(S)$).
\end{proof}

\subsubsection*{Finite local cohomology} 
A finite $R$-module $M$ is said to have {\it finite local cohomology} (FLC), if $\lambda(\H_{\m}^i(M)) < \infty$ for all $i \ne  \dim M$. This condition is equivalent to saying that $M$ possesses a system of parameters which is a $d^+$-sequence relative to $M$ (see \cite{STC, SV}). We call such a system of parameters of $M$ is {\it standard} (\cite{T}). Remember that every partial system of parameters of $M$ is amenable, once $M$ has FLC (\cite{STC}). 

\begin{Proposition}\label{4.6FLC}  Suppose that $M$ has {\rm FLC} and that $\xx = x_1, \ldots, x_r$ is a part of a standard system of parameters of $M$. Then for each $0 \le i \le r-1$ we have
\[ (-1)^{i} \rmj_{i}(\xx; M)= k_i(\xx; M) = \sum_{j=1}^{r-i}\binom{r-i -1}{j-1}h^j(M).\]
\end{Proposition}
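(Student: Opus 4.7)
The first equality $(-1)^i \rmj_i(\xx;M) = k_i(\xx;M)$ is already the content of Theorem~\ref{19}, so my target is the closed form for $k_i(\xx;M)$. By Lemma~\ref{4.6}, $k_i(\xx;M) = h^0(M/I_{r-i}M) - h^0(M/I_{r-i-1}M)$, so everything collapses to computing $h^0(M/I_sM)$ for the relevant values of $s$.

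The plan is to establish the auxiliary identity
\[
h^0(M/I_s M) \;=\; \sum_{k=0}^{s}\binom{s}{k}\, h^k(M)
\]
by induction on $s$ (proving simultaneously the analogous statement $h^j(M/I_sM)=\sum_{k=0}^{s}\binom{s}{k}h^{j+k}(M)$ for all $j$ in the range in which both sides are finite, which is the form the induction actually needs). The base case $s=0$ is tautological. For the inductive step, write $\overline{M}=M/x_1M$. The defining property of a standard system of parameters on an FLC module is precisely that $x_1$ annihilates every $\rmH^j_\fkm(M)$ with $j<d$ (cf.\ \cite{STC, SV, T}), so the long exact sequence of Lemma~\ref{4.4}~(1) collapses into short exact pieces
\[
0 \to \rmH^j_\fkm(M) \to \rmH^j_\fkm(\overline{M}) \to \rmH^{j+1}_\fkm(M) \to 0,\qquad 0\le j<d-1,
\]
which gives the two-term relation $h^j(\overline{M})=h^j(M)+h^{j+1}(M)$. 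It is classical that $\overline{M}$ inherits FLC and that $x_2,\ldots,x_r$ is a part of a standard system of parameters of $\overline{M}$, so the inductive hypothesis applies to the pair $(\overline{M},\,x_2,\ldots,x_r)$; iterating the two-term relation and applying Pascal's rule yields the claimed formula for $h^0(M/I_sM)$.

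Granted the auxiliary identity, the proposition is pure combinatorics:
\[
k_i(\xx;M) \;=\; \sum_{k=0}^{r-i}\binom{r-i}{k}h^k(M) \;-\; \sum_{k=0}^{r-i-1}\binom{r-i-1}{k}h^k(M) \;=\; \sum_{k=1}^{r-i}\binom{r-i-1}{k-1}h^k(M),
\]
since the $k=0$ contributions cancel and a second application of Pascal's rule $\binom{r-i}{k}=\binom{r-i-1}{k}+\binom{r-i-1}{k-1}$ transforms the remaining differences into the stated single binomial. The only non-formal ingredient is the descent of the standard-s.o.p.\ property (and hence FLC) to $M/x_1M$, which is exactly the classical point from \cite{STC, SV, T} that makes the whole inductive scheme work; this will be the main substantive step, while the rest is straightforward bookkeeping.
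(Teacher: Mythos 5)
Your proposal is correct and follows the same route as the paper: the first equality from Theorem~\ref{19}, the reduction $k_i(\xx;M)=h^0(M/I_{r-i}M)-h^0(M/I_{r-i-1}M)$ from Lemma~\ref{4.6}, the identity $h^0(M/I_kM)=\sum_{j=0}^{k}\binom{k}{j}h^j(M)$, and the concluding application of Pascal's rule. The only difference is that the paper simply quotes \cite[Lemma 4.14]{SV} for that identity, whereas you rederive it by induction from the fact that a standard system of parameters annihilates the lower local cohomology modules (so the long exact sequence of Lemma~\ref{4.4}~(1) splits into short exact pieces), which is a legitimate self-contained substitute for the citation.
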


\begin{proof} By \cite[Lemma 4.14]{SV} $h^{0}(M/I_{k}M) = \sum_{j=0}^{k} {{k}\choose{j}} h^{j}(M)$ for $0 \leq k < d$.
Hence by Lemma \ref{4.6} and Theorem \ref{19} we get 
\[ \begin{array}{rcl}
(-1)^{i} \rmj_{i}(\xx; M)= k_{i}(\xx ; M) &=& h^{0}(M/I_{r-i}M) - h^{0}(M/I_{r-i-1}M) \\ && \\
                     &=& {\ds \sum_{j=0}^{r-i} {{r-i}\choose{j}} h^{j}(M)  - \sum_{j=0}^{r-i-1} {{r-i-1}\choose{j}} h^{j}(M)    } \\ && \\
                     &=& {\ds \sum_{j=1}^{r-i-1} \left( {{r-i}\choose{j}} - {{r-i-1}\choose{j}} \right) h^{j}(M) + h^{r-i}(M) } \\ && \\
                     &=& {\ds \sum_{j=1}^{r-i}\binom{r-i -1}{j-1}h^j(M)} \\
\end{array}\] \end{proof}

\begin{Theorem}\label{FLCj1}
Suppose that $M$ has {\rm FLC} and $\depth M > 0$. Let $\xx= x_1, \ldots, x_r$ be a partial system of parameters of $M$ that is a $d$-sequence relative to $M$. Then 
 $\rmj_1(\xx ; M) = 0$ if and only if $\xx=x_{1}, \ldots, x_{r}$ is an $M$-regular sequence.
\end{Theorem}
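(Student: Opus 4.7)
The plan is to prove the ``if'' direction from Corollary~\ref{21} and the ``only if'' direction by induction on $r$. For an $M$-regular sequence $\xx$, every quotient $M/I_k M$ has positive depth, so $h^0(M/I_k M)=0$ for $1 \le k \le r$ and Lemma~\ref{4.6} forces $k_1(\xx;M)=0$; this disposes of the ``if'' direction.

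For the ``only if'' direction, the base case $r=1$ is immediate: since $M$ has FLC, $\xx=x_1$ is automatically amenable, so $(0):_M x_1 = \rmH_1(x_1;M)$ has finite length and hence lies in $\rmH_\fkm^0(M)$, which vanishes by $\depth M > 0$; thus $x_1$ is $M$-regular. For the inductive step $r \ge 2$, the idea is to apply Proposition~\ref{4.6FLC} to obtain
\[
-\rmj_1(\xx;M) \;=\; k_1(\xx;M) \;=\; \sum_{j=1}^{r-1}\binom{r-2}{j-1}\,h^j(M).
\]
Since all $h^j(M) \ge 0$, the assumption $\rmj_1(\xx;M)=0$ then forces $h^j(M)=0$ for $1\le j\le r-1$; combined with $h^0(M)=0$ this yields $\depth M \ge r \ge r-1$, at which point Theorem~\ref{21-1}(a) applies and delivers that $\xx$ is an $M$-regular sequence.

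The main obstacle is that Proposition~\ref{4.6FLC} is stated for $\xx$ being \emph{part of a standard system of parameters} of $M$, whereas here $\xx$ is only an amenable $d$-sequence partial system of parameters. The hard part of the argument is therefore to bridge this gap: extend $\xx$ to a full $d$-sequence system of parameters $x_1,\ldots,x_d$ via Proposition~\ref{ubiquityth}, and then, using that $M$ has FLC so that some power of $\fkm$ annihilates every $\rmH_\fkm^i(M)$ with $i<d$, replace the tail elements $x_{r+1},\ldots,x_d$ by high enough powers to meet the standardness condition. The delicate step is showing that the original $\xx$ itself is compatible with this standardness condition, which should follow from the interplay of amenability, the $d$-sequence hypothesis, and FLC; equivalently, one must verify that the identity $h^0(M/I_k M)=\sum_{j=0}^{k}\binom{k}{j}h^j(M)$ extends to any amenable $d$-sequence partial sop in the FLC setting, which can be attempted by induction on $k$ using the long exact sequence of Lemma~\ref{4.4}(1).
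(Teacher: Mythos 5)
Your ``if'' direction and the base case $r=1$ are fine, but the inductive step has a genuine gap, and it is precisely the step you defer as ``delicate.'' The identity $h^0(M/I_kM)=\sum_{j=0}^{k}\binom{k}{j}h^j(M)$ does \emph{not} extend to arbitrary amenable $d$-sequence partial systems of parameters of an FLC module: already for $k=1$ the long exact sequence of Lemma~\ref{4.4}(1) gives only $h^0(M/x_1M)=h^0(M)+\lambda\bigl((0):_{\rmH^1_\m(M)}x_1\bigr)$, and equality with $h^0(M)+h^1(M)$ forces $x_1\rmH^1_\m(M)=(0)$, which the $d$-sequence hypothesis does not provide (when $\depth M>0$ the $d$-sequence condition on $x_1$ is vacuous, since $x_1$ is automatically $M$-regular). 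This is exactly why Proposition~\ref{4.6FLC} is stated only for parts of \emph{standard} systems of parameters, and why in the proof of Theorem~\ref{Finjplus} the paper obtains only the inequality $k_i(\xx;M)\le\sum_{j=1}^{r-i}\binom{r-i-1}{j-1}h^j(M)$ for general amenable $d$-sequences. An upper bound is useless for your purpose: from $k_1(\xx;M)=0$ and $k_1\le\sum_j\binom{r-2}{j-1}h^j(M)$ you cannot conclude $h^j(M)=0$. Your proposed repair --- extending $\xx$ by Proposition~\ref{ubiquityth} and raising the tail $x_{r+1},\ldots,x_d$ to high powers --- cannot close the gap either, because standardness of the resulting full system would in particular force $x_1\rmH^i_\m(M)=(0)$ for $i<d$, a constraint on the given $x_1$ itself that nothing in the hypotheses guarantees; if $x_1$ fails it, $\xx$ is simply not part of any standard system of parameters.

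The theorem is nevertheless true, and the paper proves it by a different induction that never invokes standardness. Having reduced to $r\ge 4$ via Theorem~\ref{21-1} (note $M$ is unmixed since it has FLC and positive depth), one sets $\overline{M}=M/x_1M$ and $L=\overline{M}/[(0):_{\overline{M}}x_2]$; then $L$ has FLC, $\depth L>0$, and $\rmj_1(\xx';L)=\rmj_1(\xx';\overline{M})=\rmj_1(\xx;M)=0$ for $\xx'=x_2,\ldots,x_r$, so by induction $\xx'$ is $L$-regular and $\depth L\ge r-1$. Hence $\rmH_\m^i(\overline{M})=(0)$ for $1\le i\le r-2$, and the exact sequences of Lemma~\ref{4.4}(1) together with Nakayama's lemma (each $\rmH_\m^i(M)$ being finitely generated by FLC) give $\rmH_\m^i(M)=(0)$ for $1\le i\le r-2$, i.e.\ $\depth M\ge r-1$; Theorem~\ref{21-1}(a) then concludes. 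If you want to salvage your strategy of first proving $h^j(M)=0$ for $1\le j\le r-1$, the correct vehicle is an induction on $r$ using $\lambda\bigl((0):_{\rmH^{j}_\m(M)}x_1\bigr)$ and Nakayama (as in the Corollary following Theorem~\ref{Finjplus}), not the standardness identity.
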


\begin{proof} We have only to show that $\xx$ is an $M$-regular sequence, if $j_1(\xx,M) = 0$ (Corollary \ref{21} (3)). Notice that $M$ is unmixed, since $\depth M > 0$ (\cite{STC}). Therefore by Theorem \ref{21-1} we may assume that $r \ge 4$ and that our assertion holds true for $r-1$. We set $\overline{M} = M/x_1M$, $L = \overline{M}/[(0):_{\overline{M}}x_2]$, and $\xx'=x_{2}, \ldots, x_{r}$.
 Then $L$ has FLC, $\depth L >0$, and 
 \[ \rmj_{1}( \xx' ; L) = \rmj_{1}(\xx' ; \overline{M}) = \rmj_{1}(\xx; M)=0.\] Therefore by the hypothesis of induction on $r$ the sequence $\xx'$ is $L$-regular, whence  $\depth L \geq r-1$. Thus $\rmH_\m^{i}(\overline{M}) =(0)$ for all $1 \leq i \leq r-2$, so that by the exact sequence
 \[ \begin{array}{rcl}
0\to\rmH_\m^0(M)  \to  \rmH_\m^0(\overline{M}) \to \rmH_\m^1(M) \overset{x_1}{\to} \rmH_\m^1(M) \to 0  &\to&  \rmH_\m^2(M) \overset{x_1}{\to} \rmH_\m^2(M) \to  0 \to \cdots \\ && \\
\to 0 &\to& \rmH_\m^{r-2}(M) \overset{x_1}{\to} \rmH_\m^{r-2}(M) \to  0  \to \cdots \\
\end{array}\]
given in Lemma \ref{4.4} (1) we get 
$\rmH_\m^{i}(M) =(0)$ for all $1 \leq i \leq r-2$ (use Nakayama's lemma). Thus $\depth M \geq r-1$.
\end{proof}

\subsubsection*{Partial Euler characteristics}  
We are now in a position to compare $\rmj_1(\xx; M)$ to  partial Euler characteristics.
Suppose that $r > 0$ and we set $$\chi_1(x_1, \ldots, x_k;M) = \displaystyle\sum_{i=1}^{k}(-1)^{i+1}\lambda(\rmH_i(x_1, \ldots, x_k;M))$$ for each $1 \le k \le r$.  We then have the following.

\begin{Lemma}\label{23a} Let $\xx= x_1, \ldots, x_r$ {\rm(}$0 < r \le d${\rm)} be an amenable partial system of parameters of $M$ that is a $d$-sequence relative to $M$. 
Let $1 \le k \le r$ be an integer. Then 
\[\chi_1(x_1, \ldots, x_k;M)=h^0(M/I_{k-1}M).\] Hence $\chi_1(x_1, \ldots, x_k;M)=0$ if and only if $x_1, \ldots, x_k$ is an $M$-regular sequence.
\end{Lemma}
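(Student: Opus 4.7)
My plan is to derive the identity directly from the explicit length formula for Koszul homology recorded in Remark~\ref{jformula}, applied to the initial subsystem $x_1,\ldots,x_k$, and then reduce the regularity equivalence to Lemma~\ref{4.4}(2).

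The first task is to verify that the subsystem $x_1,\ldots,x_k$ satisfies the two hypotheses needed to invoke Remark~\ref{jformula}. The $d$-sequence condition is inherited verbatim from $\xx$, since the defining colon equalities only constrain indices $i \le j \le k$. For amenability I would extend $\xx$ to a full system of parameters of $M$ that is a $d$-sequence relative to $M$ (the extension corollary following Proposition~\ref{ubiquityth}) and then apply Proposition~\ref{amproperseq}(3) to the extended system, which guarantees that $\rmH_j(x_1,\ldots,x_i;M)$ has finite length for every $i$ and every $j>0$.

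With these hypotheses in hand, Remark~\ref{jformula} applied with $r$ replaced by $k$ gives
\[ \lambda(\rmH_i(x_1,\ldots,x_k;M)) \;=\; \sum_{j=i}^{k} h^0(M/I_{k-j}M)\binom{j-1}{i-1} \qquad (1 \le i \le k). \]
Substituting this into the definition of $\chi_1$ and interchanging the order of summation yields
\[ \chi_1(x_1,\ldots,x_k;M) \;=\; \sum_{j=1}^{k} h^0(M/I_{k-j}M)\left[\sum_{i=1}^{j}(-1)^{i+1}\binom{j-1}{i-1}\right]. \]
The inner bracket, after setting $\ell = i-1$, equals $\sum_{\ell=0}^{j-1}(-1)^{\ell}\binom{j-1}{\ell}=(1-1)^{j-1}$, which vanishes for $j \ge 2$ and equals $1$ for $j=1$. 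Only the $j=1$ term contributes, leaving $\chi_1(x_1,\ldots,x_k;M)=h^0(M/I_{k-1}M)$, as asserted.

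For the second assertion, the formula shows that $\chi_1(x_1,\ldots,x_k;M)=0$ is equivalent to $h^0(M/I_{k-1}M)=0$, and by Lemma~\ref{4.4}(2) this last condition characterizes exactly when $x_1,\ldots,x_k$ is an $M$-regular sequence. The only mildly subtle step in the plan is securing amenability of the initial subsystem; everything else reduces to a binomial cancellation and direct appeals to prior results.
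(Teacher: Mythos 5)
Your proof is correct and takes essentially the same route as the paper's: apply Remark~\ref{jformula} with $r$ replaced by $k$, interchange the order of summation so that the alternating binomial sum $(1-1)^{j-1}$ kills every term except $j=1$, and then quote Lemma~\ref{4.4}~(2) for the regularity criterion. The one thing you add that the paper leaves implicit is the verification that the initial subsystem $x_1,\ldots,x_k$ is itself an amenable $d$-sequence (via extension to a full system of parameters that is a $d$-sequence and Proposition~\ref{amproperseq}~(3)), which is a legitimate and welcome piece of bookkeeping rather than a divergence in method.
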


\begin{proof}
By Remark \ref{jformula}
\begin{eqnarray*}
\chi_1(x_1, \ldots, x_k:M) &=&\displaystyle\sum_{1 \le i \le j \le k}(-1)^{i+1}h^0(M/I_{k-j}M)\binom{j-1}{i-1}\\
&=&\displaystyle\sum_{j=1}^kh^0(M/I_{k-j}M)\left[\displaystyle\sum_{i=1}^j(-1)^{i+1}\binom{j-1}{i-1}\right]\\
&=& \displaystyle\sum_{j=1}^kh^0(M/I_{k-j}M)\left[ \displaystyle\sum_{i=0}^{j-1}(-1)^i\binom{j-1}{i}\right]\\
&=&h^0(M/I_{k-1}M)
\end{eqnarray*} 
The second assertion follows from Lemma \ref{4.4} (2).
\end{proof}

The following results extend \cite[Theorems 3.7 and 4.2]{chern7} to partial amenable systems of parameters.

\begin{Corollary}\label{Eulerj1}
Let $\xx= x_1, \ldots, x_r$ {\rm(}$0 < r \le d${\rm)} be an amenable partial system of parameters of $M$ that is a $d$-sequence relative to $M$. 
Then for all $0 \leq i \leq r$
\[ (-1)^{i} \rmj_{i} ( \xx ; M) = \chi_{1}( x_{1}, \ldots, x_{r+1-i}; M) - \chi_{1}( x_{1}, \ldots, x_{r-i}; M).\]
\end{Corollary}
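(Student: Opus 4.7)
The plan is to obtain the identity by direct substitution from three formulas already established in this section: Theorem~\ref{19} gives $(-1)^i\rmj_i(\xx;M)=k_i(\xx;M)$; Lemma~\ref{4.6} expresses each $k_i(\xx;M)$ as a difference of terms of the form $h^0(M/I_k M)$; and Lemma~\ref{23a} expresses every partial Euler characteristic $\chi_1(x_1,\dots,x_k;M)$ in terms of exactly the same quantities. Comparing the two evaluations of $h^0(M/I_k M)$ should immediately yield the claim, so the proof is essentially bookkeeping.

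Concretely, for $1\le i\le r-1$ the plan is to chain
\[(-1)^i\rmj_i(\xx;M)\;=\;k_i(\xx;M)\;=\;h^0(M/I_{r-i}M)-h^0(M/I_{r-i-1}M),\]
using Theorem~\ref{19} and Lemma~\ref{4.6}, and then apply Lemma~\ref{23a} with $k=r-i+1$ and $k=r-i$ (both lying in the admissible range $1\le k\le r$) to rewrite each term as a partial Euler characteristic; the result is exactly $\chi_1(x_1,\dots,x_{r+1-i};M)-\chi_1(x_1,\dots,x_{r-i};M)$. For the endpoint $i=r$ the same pattern applies with the convention $\chi_1(\emptyset;M):=0$, using $k_r(\xx;M)=h^0(M)=h^0(M/I_0M)=\chi_1(x_1;M)$.

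The one case that requires extra care is $i=0$, since the right-hand side then involves $\chi_1(x_1,\dots,x_{r+1};M)$, a sequence one longer than $\xx$. Here the plan is to invoke the ubiquity construction: Proposition~\ref{ubiquityth} (applied with $I=\fkm$, beginning the recursion from the given $x_1,\dots,x_r$) together with Proposition~\ref{amproperseq}(3) produces an element $x_{r+1}$ such that $x_1,\dots,x_{r+1}$ is an amenable partial system of parameters of $M$ which remains a $d$-sequence relative to $M$; applying Lemma~\ref{23a} to this extended sequence then supplies $\chi_1(x_1,\dots,x_{r+1};M)=h^0(M/I_rM)$, a value intrinsic to $I$ and independent of the chosen $x_{r+1}$. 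Combined with $\chi_1(x_1,\dots,x_r;M)=h^0(M/I_{r-1}M)$ and Lemma~\ref{4.6}, this yields the $i=0$ identity. The main (and essentially the only) obstacle is this boundary case: one must be careful to produce the extra element via the ubiquity theorem and to observe that $\chi_1$ of the extended sequence depends only on the ideal $I_r=(\xx)$, not on the particular extension.
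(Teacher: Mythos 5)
Your proof is correct and, for $1\le i\le r$, is exactly the paper's argument: one chains Theorem~\ref{19} (giving $(-1)^i\rmj_i(\xx;M)=k_i(\xx;M)$), Lemma~\ref{4.6}, and Lemma~\ref{23a}, with the endpoint $i=r$ reduced to $k_r(\xx;M)=h^0(M)=\chi_1(x_1;M)$ together with the convention $\chi_1(\emptyset;M)=0$. The only divergence is at $i=0$: the paper simply runs the same two-line computation for all $0\le i\le r-1$, which amounts to applying Lemma~\ref{23a} with $k=r+1$ --- outside its stated range and to an element $x_{r+1}$ that is never introduced --- whereas you explicitly produce the extension $x_{r+1}$ via Proposition~\ref{ubiquityth} and Proposition~\ref{amproperseq}(3) and observe that the resulting value $\chi_1(x_1,\ldots,x_{r+1};M)=h^0(M/I_rM)$ depends only on $I_r=(\xx)$ and not on the chosen extension; this is a genuine, if small, gain in rigor over the printed proof. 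The one residual caveat (affecting the statement itself rather than your argument) is that when $r=d$ no extension $x_{r+1}$ exists as a partial system of parameters, so in that boundary case the term $\chi_1(x_1,\ldots,x_{r+1};M)$ can only be understood as the quantity $h^0(M/I_rM)$ by fiat.
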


\begin{proof}
For $0 \leq i \leq r-1$ 
\[ \begin{array}{rcll}
 (-1)^{i} \rmj_{i} ( \xx ; M)  =  k_{i}( \xx; M) &=& {\ds  h^0(M/I_{r-i}M) - h^0(M/I_{r-i - 1}M) }  \quad & \mbox{\rm (by (\ref{4.6})) }\\ &&& \\
 & = &{\ds \chi_{1}( x_{1}, \ldots, x_{r+1-i}; M) - \chi_{1}( x_{1}, \ldots, x_{r-i}; M) } \quad & \mbox{\rm (by (\ref{23a})), } \\
 \end{array}\]
while for $i=r$ 
 \[ (-1)^{r} \rmj_{r} ( \xx; M) = k_{r}(\xx; M) = h^{0}(M) = \chi_{1}(x_{1} ; M),\]
  since $\rmH^{0}_{\m}(M)=  (0):_{M} x_{1}$. 
\end{proof}

We close this section with the following.

\begin{Proposition}\label{23}
Let $\xx= x_1, \ldots, x_r$ {\rm(}$ 2 \le r \le d${\rm)} be an amenable partial system of parameters of $M$ that is a $d$-sequence relative to $M$. 
Then
\[ \chi_1(\xx;M) \ge - \rmj_1(\xx; M), \]
where the equality holds true if and only if $x_1, \ldots, x_{r-1}$ is an $M$-regular sequence.
\end{Proposition}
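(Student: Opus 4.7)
The plan is to evaluate both $\chi_1(\xx;M)$ and $-\rmj_1(\xx;M)$ using the closed-form expressions already established in this section, and then to identify their difference with a quantity whose vanishing has a clean characterization.

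First I would apply Lemma \ref{23a} with $k = r$ to obtain
\[
\chi_1(\xx;M) = h^0(M/I_{r-1}M).
\]
Next, since $r \ge 2$, Lemma \ref{4.6} (equivalently Corollary \ref{20}) gives
\[
-\rmj_1(\xx;M) = k_1(\xx;M) = h^0(M/I_{r-1}M) - h^0(M/I_{r-2}M).
\]
Subtracting these two identities yields
\[
\chi_1(\xx;M) - \bigl(-\rmj_1(\xx;M)\bigr) = h^0(M/I_{r-2}M) \;\ge\; 0,
\]
which is the desired inequality $\chi_1(\xx;M) \ge -\rmj_1(\xx;M)$.

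For the equality statement, the previous display shows that equality is equivalent to $h^0(M/I_{r-2}M) = 0$. By Lemma \ref{4.4} (2), this vanishing is equivalent to $x_1,\ldots,x_{r-1}$ being an $M$-regular sequence, which completes the argument. There is no genuine obstacle: both of the main formulas ($\chi_1$ as a single $h^0$, and $-\rmj_1$ as a difference of two consecutive $h^0$'s) have been proved earlier in the section, and the vanishing criterion for $h^0(M/I_{r-2}M)$ is exactly Lemma \ref{4.4} (2). The content of the proposition is thus simply the telescoping identity $\chi_1(\xx;M) + \rmj_1(\xx;M) = h^0(M/I_{r-2}M)$.
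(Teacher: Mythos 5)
Your proof is correct and is essentially the paper's own argument: the paper likewise reduces everything to the identity $\chi_1(\xx;M)+\rmj_1(\xx;M)=\chi_1(x_1,\ldots,x_{r-1};M)=h^0(M/I_{r-2}M)$ (via Corollary \ref{Eulerj1} and Lemma \ref{23a}, which are just the packaged forms of the telescoping computation you carry out), and then invokes the vanishing criterion of Lemma \ref{4.4} (2). The only cosmetic difference is that you unwind Corollary \ref{Eulerj1} into Lemma \ref{4.6} and Theorem \ref{19} rather than citing it directly.
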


\begin{proof}
Notice that 
\[ \chi_1(x_1,  \ldots, x_r;M) = \chi_1(x_1, \ldots, x_{r-1};M) - \rmj_1(\xx; M) \ge - \rmj_1(\xx; M)\]
(Corollary \ref{Eulerj1}) and we get $\chi_1(x_1, \ldots, x_{r};M)= - \rmj_1(\xx; M)$ if and only if $\chi_1(x_1, \ldots, x_{r-1};M) =0$. Hence  the assertion follows from Lemma \ref{23a}.
\end{proof}

\section{Boundedness of $\mathbf{j}$-coefficients}

Let $(R, \m)$ be a Noetherian local ring and $M$ a finite $R$-module of dimension $d = \dim M \geq 2$. Let $ 0 < r <d$ be an integer. 
In this section firstly we discuss the problem of when the set $\Lambda(M)$ of non-negative integers
$$k_i(\xx;M) = (-1)^i \rmj_i(\xx;M)$$  is finite, where $\xx= x_1, \ldots, x_r$ ($0 \le i \le r-1$) is an amenable partial system of parameters of $M$ which is a $d$-sequence relative to $M$. The goal is to prove the following.

\begin{Theorem}\label{Finjplus}  
Assume that there exists a system of parameters of $M$ which is a strong $d$-sequence relative to $M$. 
 Then the following conditions are equivalent.
\begin{enumerate}
\item[{\rm (1)}] The set $\Lambda(M)$ is finite.
\item[{\rm (2)}] $\rmH^i_\m(M)$ is a finite $R$-module for every $1 \le i \le r$.
\end{enumerate}
When this is the case, one has $\m^{\ell}\rmH_\m^i(M) = (0)$ for all $1 \le i \le r$, where $\ell = \max \Lambda(M)$.
\end{Theorem}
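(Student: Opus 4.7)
The plan is to reformulate the theorem using Lemma~\ref{4.6}, which gives $k_i(\xx; M) = h^0(M/I_{r-i}M) - h^0(M/I_{r-i-1}M)$. Thus $\Lambda(M)$ is finite if and only if the lengths $h^0(M/I_k M)$ are uniformly bounded as $\xx$ varies over amenable partial systems of parameters that are $d$-sequences relative to $M$ and $0 \le k \le r$. The main tool throughout will be the local cohomology long exact sequence of Lemma~\ref{4.4}(1), applied iteratively along the filtration $M_k := M/(x_1, \ldots, x_k)M$.

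For the direction $(2) \Rightarrow (1)$, Lemma~\ref{4.4}(1) yields the Pascal-type recursion $\lambda(\rmH^i_\m(M_j)) \le \lambda(\rmH^i_\m(M_{j-1})) + \lambda(\rmH^{i+1}_\m(M_{j-1}))$, which iterates to $\lambda(\rmH^i_\m(M_j)) \le \sum_{p=0}^{j} \binom{j}{p} h^{i+p}(M)$. Specializing to $i = 0$ and $j \le r$, the finiteness of $\rmH^p_\m(M)$ for $1 \le p \le r$ furnishes a uniform bound on $h^0(M/I_k M)$ depending only on $M$, so $\Lambda(M)$ is finite.

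For the substantive direction $(1) \Rightarrow (2)$, I would fix the given strong $d$-sequence $\yy = y_1, \ldots, y_d$ on $M$. For any positive integers $n_1, \ldots, n_r$, the initial segment $\xx := y_1^{n_1}, \ldots, y_r^{n_r}$ is a $d$-sequence on $M$ by the strong $d$-sequence hypothesis and amenable by Proposition~\ref{amproperseq}(3), hence valid for $\Lambda(M)$. Setting $M_k = M/(y_1^{n_1}, \ldots, y_k^{n_k})M$ and $\ell := \max \Lambda(M)$, Lemma~\ref{4.6} gives $h^0(M_k) - h^0(M_{k-1}) = k_{r-k}(\xx; M) \le \ell$ for $1 \le k \le r$. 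I would then prove by induction on $i \ge 1$ that $\lambda(\rmH^i_\m(M_j)) \le \ell$ for every $0 \le j \le r - i$ and every choice of $n_1, \ldots, n_j$; specializing to $j = 0$ gives $\lambda(\rmH^i_\m(M)) \le \ell$ for $1 \le i \le r$, whence a composition-series argument (each simple subquotient $\cong R/\m$ is killed by $\m$) upgrades this to $\m^\ell \rmH^i_\m(M) = 0$. For the base case $i = 1$, pick $n_{j+1}$ large enough that $y_{j+1}^{n_{j+1}}$ annihilates the finitely generated (hence finite-length) module $\rmH^0_\m(M_j)$; then Lemma~\ref{4.4}(1) supplies the short exact sequence $0 \to \rmH^0_\m(M_j) \to \rmH^0_\m(M_{j+1}) \to 0:_{\rmH^1_\m(M_j)} y_{j+1}^{n_{j+1}} \to 0$, so $\lambda(0:_{\rmH^1_\m(M_j)} y_{j+1}^{n_{j+1}}) = h^0(M_{j+1}) - h^0(M_j) \le \ell$; since $\rmH^1_\m(M_j)$ is $\m$-torsion and $y_{j+1} \in \m$, the ascending union over $n_{j+1}$ exhausts $\rmH^1_\m(M_j)$, giving $\lambda(\rmH^1_\m(M_j)) \le \ell$. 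The inductive step $i \to i+1$ applies the analogous segment $\rmH^i_\m(M_{j+1}) \to \rmH^{i+1}_\m(M_j) \overset{y_{j+1}^{n_{j+1}}}{\to} \rmH^{i+1}_\m(M_j)$ of Lemma~\ref{4.4}(1), combined with the inductive bound $\lambda(\rmH^i_\m(M_{j+1})) \le \ell$ and the same saturation argument.

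The main delicacy is propagating the uniform bound $\ell$ intact through the double induction: strong $d$-sequence (rather than merely $d$-sequence) is used precisely so that every $y_1^{n_1}, \ldots, y_r^{n_r}$ furnishes values drawn from $\Lambda(M)$ itself, rather than from a larger family depending on the exponents. A secondary subtlety is that $\yy$ need not be a $d^+$-sequence, so the induction must respect its fixed order and cannot permute the $y_i$'s; this is why the filtration advances through the $y_i$'s in the given order.
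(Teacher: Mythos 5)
Your proposal is correct and follows essentially the same route as the paper's proof: both directions rest on Lemma~\ref{4.6} together with the long exact sequence of Lemma~\ref{4.4}~(1), with the bound for (2)~$\Rightarrow$~(1) obtained by the same Pascal-type recursion $h^i(M/x_1M)\le h^i(M)+h^{i+1}(M)$, and the converse obtained by restricting to the subfamily $k_i(a_1^{n_1},\ldots,a_r^{n_r};M)\subseteq\Lambda(M)$ coming from the strong $d$-sequence, identifying $(0):_{\rmH^{i+1}_\m(M_j)}y_{j+1}^{n_{j+1}}$ as a homomorphic image of $\rmH^i_\m(M_{j+1})$, and saturating over the exponent. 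Your reorganization of the induction (on the cohomological degree $i$ rather than on $r$, carrying length bounds $\lambda\le\ell$ instead of annihilator statements $\m^\ell(-)=(0)$) is only cosmetic, and in fact yields the final assertion $\m^\ell\rmH^i_\m(M)=(0)$ directly.
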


For the implication (2) $\Rightarrow$ (1) we do not need the assumption of  the existence of a system of parameters of $M$ which is a strong $d$-sequence relative to $M$.

\begin{proof}
(2) $\Rightarrow$ (1)
Let $\xx = x_1, \ldots, x_r $ be an amenable partial system of parameters of $M$ that is a $d$-sequence relative to $M$. 
 We will show that for all $0 \leq i \leq r-1$ 
$$k_i(\xx; M) \le \displaystyle\sum_{j=1}^{r-i}h^j(M)\binom{r-i -1}{j-1}.$$ If $r= 1$, then by Lemma \ref{4.6} and the exact sequence
$$(\sharp_0)\ \ \ 0 \to \rmH^0_\m(M) \to \rmH^0_\m(M/x_1M) \to \rmH^1_\m(M) \overset{x_1}{\to} \rmH^1_\m(M)$$
stated in Lemma \ref{4.4} (1) we have
\[ k_{0}(\xx; M) = h^0(M/x_1M) - h^0(M) = \lambda((0):_{\rmH^1_\m(M)}{x_1})\le h^1(M).\]
 Suppose that $r > 1$ and that our assertion holds true for $r-1$. 
We consider $\overline{M} = M/x_1M$. Then thanks to the exact  sequence $$(\sharp_i)\ \ \ \ \rmH_\fkm^i(M) \overset{x_1}{\to} \rmH^i_\m(M) \to \rmH^i_\m(\overline{M}) \to \rmH^{i+1}_\m(M) \overset{x_1}{\to} \rmH^{i+1}_\m(M)$$ given in Lemma \ref{4.4} (1), $\rmH^i_\m(\overline{M})$ is finitely generated and $h^i(\overline{M}) \le h^i(M)+ h^{i+1}(M)$ for all $1 \le i \le r-1$.
Let $\xx'=x_2, \ldots, x_r$. Then for all $0 \le i \le r-2$, by Lemma \ref{4.6} and the hypothesis of induction we get
\[
\begin{array}{rcl}
{\ds k_i(\xx; M) = k_i( \xx';  \overline{M}) \le \sum_{j=1}^{r-i-1}h^j(\overline{M})\binom{r-i -2}{j-1} }
&\le& {\ds \sum_{j=1}^{r-i-1}\left[h^j(M) + h^{j+1}(M)\right]\binom{r-i -2}{j-1} }\\
&=& {\ds \sum_{j=1}^{r-i}h^j(M)\binom{r-i -1}{j-1}, } \\
\end{array} 
\]
while we have by Lemma \ref{4.6}
$$k_{r-1}(\xx; M) = h^0(M/x_1M) - h^0(M) \le h^1(M),$$ thanks to  exact sequence
$(\sharp_0)$ above.
\bigskip

 (1) $\Rightarrow$ (2) We choose a system $a_1, \ldots, a_d$ of parameters of $M$ which is a strong $d$-sequence relative to $M$.
Let $\Lambda_0$ denote the set of $k_i(a_1^{n_1},a_2^{n_2}, \ldots, a_r^{n_r};M)'s$, where $0 \le i \le r-1$ and $n_i's$ are positive integers. Then $\Lambda_0 \subseteq \Lambda$ and hence $\Lambda_0$ is finite. We will show by induction on $r$ that $\m^{\ell}\rmH^i_\m(M) = (0)$ for all $1 \le i \le r$, where $\ell = \max \Lambda_0$.

Let $r=1$. Consider exact sequence $(\sharp_0)$ above where $x_{1}=a_{1}^{n_1}$ and $n_{1} >0$.
Then we have
$$\lambda \left( \left[(0):_{\rmH^1_\m(M)}a_1^{n_1}\right] \right)= h^0(M/a_1^{n_1}M) - h^0(M) = k_0(a_1^{n_1};M) \in \Lambda_0,$$ so that $\lambda \left( \left[(0):_{\rmH^1_\m(M)}a_1^{n_1}\right] \right)\le \ell$, whence  
$$\m^\ell\left[(0):_{\rmH^1_\m(M)}a_1^{n_1}\right] = (0).$$
 Therefore $\m^\ell\rmH^1_\m(M) = (0),$ since $n_1 >0 $ is arbitrary. Assume now that $r>1$ and that our assertion holds true for $r-1$. Let $\overline{M} = M/a_{1}^{n_1}M$.  Then the set of $$k_i(a_2^{n_2}, \ldots, a_r^{n_r}; \overline{M})=k_i(a_1^{n_1}, \ldots, a_r^{n_r}; M)$$
where $0 \le i \le r-2$ and $n_i's$ are positive integers is a subset of  $\Lambda_0$, whence  the hypothesis of induction shows
$\m^{\ell}\rmH^i_\m(\overline{M}) = (0)$
for all  $1 \le i \le r-1$. Since $\left[(0)  :_{\rmH^{i+1}_\m(M)}a_1^{n_1}\right]$ is a homomorphic image of $\rmH^i_\m(\overline{M})$ as shown in exact sequence $(\sharp_{i})$, we have
$$\m^\ell\left[(0)  :_{\rmH^{i+1}_\m(M)}a_1^{n_1}\right] = (0)$$
for all $1 \leq i \leq r-1$, while by exact sequence $(\sharp_0)$ we have
\[ \lambda \left( \left[(0)  :_{\rmH^{1}_\m(M)}a_1^{n_1}\right] \right)  = h^{0}(M/a_{1}^{n_1}M)- h^{0}(M) = 
k_{r-1}( a_1^{n_1},a_2^{n_2}, \ldots, a_r^{n_r};M)   \in \Lambda_{0}.\]
Therefore
\[ \m^\ell\left[(0)  :_{\rmH^{i}_\m(M)}a_1^{n_1}\right] = (0)  \quad \mbox{\rm for all} \;\; 1 \leq i \leq r. \]
Thus 
$\m^\ell\rmH^i_\m(M) = (0)$ for all $1 \le i \le r$, since $n_{1} >0$ is arbitrary.
\end{proof}

Let us describe in Proposition \ref{3} below a broad class of modules for which the existence of strong $d$-sequences is guaranteed. It is based
on a result of T. Kawasaki \cite[Theorem 4.2 (1)]{K}, which deals with a case of rings. We extend this result to a case of modules and include a brief proof for the reader's convenience. We start with the following observation.

\begin{Lemma}\label{2}
Let $0 \to X \to Y \to Z \to 0$ be an exact sequence of $R$-modules and let $f_1, \ldots, f_n$ be a sequence of elements in $R$. If $f_1, \ldots, f_n$ is a $Z$-regular sequence which is a $d$-sequence relative to $Z$, then $f_1, \ldots, f_n$ is a $d$-sequence relative to $X$.
\end{Lemma}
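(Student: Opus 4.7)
The plan is to verify directly the $d$-sequence identity $(f_1, \ldots, f_{i-1})X :_X f_i f_j = (f_1, \ldots, f_{i-1})X :_X f_j$ on $X$ for $1 \le i \le j \le n$: given $x \in X$ with $f_i f_j x \in (f_1, \ldots, f_{i-1})X$, I would produce an expression $f_j x = \sum_{k<i} f_k x_k'$ with $x_k' \in X$. The strategy is to first obtain such an expression with witnesses in the ambient module $Y$ using the $d$-sequence hypothesis, and then to adjust the witnesses back into the submodule $X$ using the $Z$-regularity.

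First, since $(f_1, \ldots, f_{i-1})X \subseteq (f_1, \ldots, f_{i-1})Y$ and $X \subseteq Y$, the $d$-sequence property applied in $Y$ gives $f_j x \in (f_1, \ldots, f_{i-1})Y$, so one writes $f_j x = \sum_{k<i} f_k y_k$ with $y_k \in Y$. Reducing modulo $X$ and using that $x \in X$ maps to $0$ in $Z$, the relation $\sum_{k<i} f_k \bar y_k = 0$ holds in $Z$, where $\bar y_k$ denotes the image of $y_k$.

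Next, I invoke the $Z$-regularity hypothesis. Because $f_1, \ldots, f_{i-1}$ is a regular sequence on $Z$, the first Koszul homology $\mathrm{H}_1(f_1, \ldots, f_{i-1};Z)$ vanishes, so every syzygy among $f_1, \ldots, f_{i-1}$ on $Z$ is a Koszul combination. Hence there exist elements $\bar c_{k\ell} \in Z$ with $\bar c_{k\ell} = -\bar c_{\ell k}$ such that $\bar y_k = \sum_\ell f_\ell \bar c_{k\ell}$. Lift antisymmetrically to $c_{k\ell} \in Y$ by picking arbitrary lifts for $k<\ell$ and setting $c_{\ell k} := -c_{k\ell}$; each element $x_k' := y_k - \sum_\ell f_\ell c_{k\ell}$ then maps to $0$ in $Z$ and therefore lies in $X$. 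Substituting back yields
$$ f_j x \;=\; \sum_k f_k y_k \;=\; \sum_k f_k x_k' \;+\; \sum_{k,\ell} f_k f_\ell c_{k\ell}, $$
and the double sum vanishes by antisymmetry, since $\sum_{k<\ell} f_k f_\ell (c_{k\ell} + c_{\ell k}) = 0$. Thus $f_j x = \sum_k f_k x_k' \in (f_1, \ldots, f_{i-1})X$, as required.

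The \emph{main obstacle} is the antisymmetric lifting step: arranging the $c_{k\ell}$ in $Y$ so that $c_{\ell k} = -c_{k\ell}$ on the nose (not merely after reducing to $Z$), so that the quadratic cross-terms cancel exactly. This is the precise mechanism by which the vanishing of the first Koszul homology on $Z$ is converted into the $d$-sequence property on $X$; the rest of the argument is bookkeeping in the short exact sequence $0 \to X \to Y \to Z \to 0$.
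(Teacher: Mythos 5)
The paper states this lemma without proof, so there is no in-text argument to compare against; your proposal has to stand on its own, and as a piece of mathematics it does. The mechanism is exactly the right one: pass to $Y$ to get $f_jx=\sum_{k<i}f_ky_k$, observe that the $\bar y_k$ form a syzygy of $f_1,\ldots,f_{i-1}$ on $Z$, use the vanishing of $\rmH_1(f_1,\ldots,f_{i-1};Z)$ (valid since an initial segment of a $Z$-regular sequence is $Z$-regular) to write that syzygy as a Koszul one, and lift the antisymmetric coefficient matrix antisymmetrically to $Y$ so that the quadratic cross-terms cancel on the nose and the corrected witnesses $x_k'=y_k-\sum_\ell f_\ell c_{k\ell}$ land in $X=\ker(Y\to Z)$. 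Each step checks out, including the degenerate case $i=1$.

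There is, however, one point you must make explicit rather than pass over silently: your first step invokes "the $d$-sequence property applied in $Y$," i.e.\ you assume $f_1,\ldots,f_n$ is a $d$-sequence relative to $Y$, whereas the lemma as printed hypothesizes that it is a $d$-sequence relative to $Z$. As printed the hypothesis is redundant (a $Z$-regular sequence is automatically a $d$-sequence relative to $Z$) and the statement is false: take any $X$ on which $f_1$ fails to be a $d$-sequence (e.g.\ $X=R/(f_1^2)$ with $f_1$ a nonzerodivisor of $R$), set $Y=X\oplus Z$ with $Z$ on which the $f_i$ are regular, and both printed hypotheses hold while the conclusion fails. The intended hypothesis is "a $d$-sequence relative to $Y$" --- this is what the application in Proposition \ref{3} supplies ($f_1,\ldots,f_d$ is a strong $d$-sequence on $A=Y$ and a regular sequence on $R=Z$), and it is the version your proof actually establishes. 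So: correct proof of the correct (amended) statement, but you should flag the amendment instead of quietly substituting it.
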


\begin{Proposition}\label{3}
Suppose that $R$ is a homomorphic image of a Gorenstein local ring and let $M$ be a finite $R$-module of dimension $d > 0$. Then there is a system of parameters of $M$ which is a strong $d$-sequence relative to $M$.
\end{Proposition}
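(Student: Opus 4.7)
The plan is to reduce the module-theoretic statement to Kawasaki's ring-theoretic result [K, Theorem 4.2(1)] by invoking Lemma \ref{2} to transfer the conclusion from a suitable ring to $M$.

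First I would reduce to the case $R = S$ Gorenstein. Since $R$ is a homomorphic image of a Gorenstein local ring $S$, the module $M$ is automatically a finitely generated $S$-module with $\dim_S M = d$, and being a strong $d$-sequence relative to $M$ depends only on the colon submodules inside $M$, which are the same whether computed over $R$ or over $S$. So without loss of generality $R$ itself is Gorenstein of dimension $n \geq d$.

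Next, the natural ring to feed into Kawasaki's theorem is $A = R/\mathrm{ann}_R M$: a $d$-dimensional Noetherian local ring that is itself a homomorphic image of the Gorenstein ring $R$. Kawasaki's theorem then produces a system of parameters of $A$, lifted to $x_1, \ldots, x_d \in R$, which is a strong $d$-sequence in $A$; since $M$ is faithful over $A$, the sequence $x_1, \ldots, x_d$ is already a system of parameters of $M$.

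The remaining and delicate step is to promote the strong $d$-sequence property from $A$ to $M$. Fix a tuple of positive integers $r_1, \ldots, r_d$; one must check that $x_1^{r_1}, \ldots, x_d^{r_d}$ is a $d$-sequence relative to $M$. The natural strategy is to fit $M$ into a short exact sequence $0 \to M \to Y \to Z \to 0$ in which $Z$ is a $d$-dimensional Cohen--Macaulay $R$-module on which $x_1^{r_1}, \ldots, x_d^{r_d}$ remains a system of parameters. Then this sequence is automatically $Z$-regular and a $d$-sequence on $Z$, and Lemma \ref{2} delivers the $d$-sequence condition on $M$ for free. Such a Cohen--Macaulay approximation exists over the Gorenstein ring $R$: one may take $Z$ to be built from the canonical module of $R$, for instance a biduality module of the form $\mathrm{Ext}^{n-d}_R(\mathrm{Ext}^{n-d}_R(M, R), R)$, whose dimension and Cohen--Macaulay nature are controlled by local duality.

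The main obstacle is uniformity: one wants a single sequence $x_1, \ldots, x_d$ for which \emph{every} power substitution $x_1^{r_1}, \ldots, x_d^{r_d}$ simultaneously behaves correctly on the auxiliary module $Z$. Kawasaki's construction is designed to secure exactly this by performing iterated prime avoidance against the (finitely many) associated primes of all the Ext modules $\mathrm{Ext}^{n-i}_R(M, R)$ for $i < d$, whose finiteness is the crucial consequence of the Gorenstein hypothesis on $S$. Once this avoidance is in place, a single appeal to Lemma \ref{2} closes the argument uniformly across all exponent vectors, and we conclude that $x_1, \ldots, x_d$ is a strong $d$-sequence relative to $M$.
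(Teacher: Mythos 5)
There is a genuine gap at the transfer step, and it is precisely the step the paper's proof is engineered to avoid. Applying Kawasaki's theorem to $A = R/\mathrm{ann}_R M$ gives you a system of parameters that is a strong $d$-sequence relative to the \emph{ring} $A$; this says nothing yet about $M$. To pass to $M$ you invoke Lemma~\ref{2} for some exact sequence $0 \to M \to Y \to Z \to 0$, but that lemma, as it is actually used in the paper, needs the sequence to be a $d$-sequence relative to the \emph{middle} term $Y$ (a hypothesis only on $Z$ cannot suffice: take $Y = X \oplus Z$ with $Z$ free and $X$ arbitrary; note also that a $Z$-regular sequence is automatically a $d$-sequence relative to $Z$, so the clause as printed is redundant and the intended hypothesis is on $Y$). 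Your $Y$ is left unspecified, and nothing in your construction makes the Kawasaki sequence for $R/\mathrm{ann}_R M$ a strong $d$-sequence relative to $Y$. The proposed quotient $Z = \mathrm{Ext}^{n-d}_R(\mathrm{Ext}^{n-d}_R(M,R),R)$ is also problematic: the natural map from $M$ to the bidual need not be injective when $M$ is not unmixed, and the bidual need not be Cohen--Macaulay, so even the existence of the approximation sequence is not established. The closing appeal to ``iterated prime avoidance against the associated primes of the Ext modules'' describes what one would like a module version of Kawasaki's construction to do, but it is not an argument.

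The paper's device is the idealization $A = R \ltimes M$ (after reducing to $R$ Gorenstein with $\dim R = \dim M = d$). This $A$ is again a homomorphic image of a Gorenstein ring, so Kawasaki's theorem applies to it directly and yields a system of parameters $f_i = (a_i, x_i)$ that is a strong $d$-sequence relative to $A$ \emph{itself}. Now $M$ sits inside $A$ as the ideal $(0) \times M$, giving the exact sequence $0 \to {}_pM \to A \to R \to 0$ in which the middle term already carries the strong $d$-sequence property and the quotient $R$ is Gorenstein, so every power sequence $a_1^{r_1}, \ldots, a_d^{r_d}$ is $R$-regular. Lemma~\ref{2} then pushes the property down to the submodule ${}_pM$, i.e.\ to $M$ with the $a_i$ acting. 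To salvage your route you would have to either prove a module version of Kawasaki's theorem from scratch or exhibit an embedding of $M$ into an object where the strong $d$-sequence property is already known; the idealization is exactly such an embedding, and replacing it by $R/\mathrm{ann}_R M$ loses the essential feature that $M$ is a submodule of the ring being fed to Kawasaki's theorem.
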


\begin{proof}
We may assume that $R$ is a Gorenstein local ring with $\dim R = \dim M=d$ and consider the idealization $A = R \ltimes M$ of $M$ over $R$.  Then, 
since $A$ is a homomorphic image of a Gorenstein ring,
 by \cite[Theorem 4.2 (1)]{K} $A$ contains a system $f_1, \ldots, f_d$ of parameters which is a strong $d$-sequence relative to $A$. We write $f_i = (a_i, x_i)$ with $a_i \in \m$ and $x_i \in M$. Let $p : A \to R, (a,x) \mapsto a$ and let ${}_pM$ denote $M$ which is regarded as an $A$-module via the ring homomorphism $p : A \to R, (r,m) \mapsto r$. Then ${}_pM =(0) \times M  \subseteq A$ and we get the exact sequence
$$0 \to {}_pM \to A \overset{p}{\to} R \to 0$$
of $A$-modules. Since $R$ is a Gorenstein local ring possessing  $a_1, \ldots, a_d$ as a system of parameters,  by Lemma \ref{2} $f_1, \ldots, f_d$ is a strong $d$-sequence relative to ${}_pM$. Hence $a_1, \ldots,a_d$ is a strong $d$-sequence relative to $M$.
\end{proof}

We note a consequence of Theorem~\ref{Finjplus}.

\begin{Corollary}  Let $\xx= x_1, \ldots, x_r$ be an amenable partial system of parameters of $M$ which is a $d$-sequence relative to $M$.  Suppose that $\lambda(\rmH_\m^i(M)) < \infty$ for all $1 \le i \le r$. Let $0 \le  p < r$ be an integer. Then the following conditions are equivalent.
\begin{enumerate}
\item[{\rm (1)}] $\rmj_p(\xx ; M) = 0$.
\item[{\rm (2)}] $\rmH^i_\m(M) = (0)$ for all $1 \le  i \le r - p.$
\end{enumerate}
Therefore  $\rmj_1( \xx ; M) = 0$ if and only if $\xx$ is $M$-regular, provided $\depth M > 0$.
\end{Corollary}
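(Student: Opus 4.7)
The plan is to reduce the equivalence to showing, under our hypotheses, that $k_p(\xx;M) = 0$ if and only if $\H^i_\fkm(M) = 0$ for all $1 \le i \le r-p$. By Theorem~\ref{19} we have $(-1)^p \rmj_p(\xx;M) = k_p(\xx;M)$, and the short exact sequence displayed in the proof of Lemma~\ref{4.6} exhibits $k_p(\xx;M)$ as the length of a module, hence a non-negative integer. This non-negativity will be crucial in the easier direction.

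For (2)$\Rightarrow$(1) I would invoke the inequality
\[
k_p(\xx;M) \le \sum_{j=1}^{r-p} h^j(M)\binom{r-p-1}{j-1},
\]
established in the first half of the proof of Theorem~\ref{Finjplus}; inspection of that argument shows it uses only that $\xx$ is an amenable $d$-sequence relative to $M$ and that $\H^i_\fkm(M)$ is finitely generated for $1 \le i \le r$, both of which are part of our hypotheses. Under (2) the right-hand side vanishes, and the non-negativity of $k_p$ then forces $k_p(\xx;M) = 0$. For (1)$\Rightarrow$(2) I would induct on $r$. In the base case $r = 1$, $p = 0$, the long exact sequence of Lemma~\ref{4.4}\,(1) identifies $k_0(\xx;M) = h^0(M/x_1M) - h^0(M)$ with $\lambda\!\left((0):_{\H^1_\fkm(M)} x_1\right)$; its vanishing makes multiplication by $x_1$ injective on the finite-length module $\H^1_\fkm(M)$, and since $x_1 \in \fkm$ this endomorphism is then also surjective, forcing $\H^1_\fkm(M) = 0$. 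In the inductive step when $p \le r-2$, Lemma~\ref{4.6} gives $k_p(\xx';\overline{M}) = k_p(\xx;M) = 0$ for $\overline{M} = M/x_1M$ and $\xx' = x_2,\ldots,x_r$; since $\xx'$ is an amenable $d$-sequence relative to $\overline{M}$ and the long exact sequence of Lemma~\ref{4.4}\,(1) shows each $\H^i_\fkm(\overline{M})$ is finitely generated for $1 \le i \le r-1$, the induction hypothesis yields $\H^i_\fkm(\overline{M}) = 0$ for $1 \le i \le r-1-p$. I would then lift these vanishings back to $M$ through the same long exact sequence: for each such $i$, $\H^i_\fkm(\overline{M}) = 0$ makes $x_1$ surjective on $\H^i_\fkm(M)$ and injective on $\H^{i+1}_\fkm(M)$, and the finite-length/$\fkm$-element trick of the base case makes both vanish, yielding $\H^j_\fkm(M) = 0$ for $1 \le j \le r-p$. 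The boundary case $p = r-1$ in the induction is handled directly by the base-case argument.

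For the final sentence, specializing to $p = 1$ and combining with the hypothesis $\depth M > 0$ (i.e., $\H^0_\fkm(M) = 0$) gives $\H^i_\fkm(M) = 0$ for $0 \le i \le r-1$, whence $\depth M \ge r$. Iterating the exact sequence of Lemma~\ref{4.4}\,(1) then shows $h^0(M/I_{r-1}M) = h^0(M) = 0$, and Lemma~\ref{4.4}\,(2) concludes that $\xx$ is $M$-regular; the reverse direction is immediate from Corollary~\ref{21}\,(3). The step I expect to be the main obstacle is the lifting in the inductive proof of (1)$\Rightarrow$(2): verifying that the surjectivity range $[1, r-1-p]$ and the injectivity range $[2, r-p]$ together cover $[1, r-p]$, and that the finite-length hypothesis is strong enough to convert those surjective/injective statements on $\H^i_\fkm(M)$ into outright vanishing.
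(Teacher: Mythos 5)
Your argument is correct and is essentially the paper's own: both hinge on Theorem~\ref{19}, Lemma~\ref{4.6}, and an induction on $r$ that passes to $\overline{M}=M/x_1M$ and converts the vanishing of $\rmH_\fkm^i(\overline{M})$ for $1\le i\le r-1-p$ into the vanishing of $\rmH_\fkm^i(M)$ for $1\le i\le r-p$ through the long exact sequence of Lemma~\ref{4.4}~(1) together with Nakayama's lemma applied to the finite-length modules $\rmH_\fkm^i(M)$. The only differences are organizational: you derive (2)$\Rightarrow$(1) from the bound $k_p(\xx;M)\le\sum_{j=1}^{r-p}h^j(M)\binom{r-p-1}{j-1}$ already established in the proof of Theorem~\ref{Finjplus} (legitimately, since that direction needs no strong $d$-sequence), and you argue the final assertion directly via Lemma~\ref{4.4}~(2) where the paper simply cites Theorem~\ref{21-1} and Corollary~\ref{21}; both variants are sound.
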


\begin{proof} Suppose that $r=1$ and consider the exact sequence 
$$ 0 \to \rmH_\fkm^{0}(M)  \to \rmH^0_\m(M/x_{1}M) \to \rmH^{1}_\m(M) \overset{x_1}{\to} \rmH^{1}_\m(M).$$
Then $\rmj_{0}(\xx; M)= k_{0}(\xx; M) = h^0(M/x_{1}M) - h^0(M) = \lambda((0):_{\rmH^1_\m(M)}x_1)$ by Lemma \ref{4.6}, whence $\rmj_{0}(\xx; M)=0$ if and only if $\rmH^1_\m(M) = (0)$.

Suppose that $r >1$ and that our assertion holds true for $r-1$. Let $\overline{M}=M/x_{1}M$ and $\xx'=x_{2}, \ldots, x_{r}$. Then thanks to the exact sequence 
$$(\sharp_i)\ \ \ \ \rmH_\fkm^i(M) \overset{x_1}{\to} \rmH^i_\m(M) \to \rmH^i_\m(\overline{M}) \to \rmH^{i+1}_\m(M) \overset{x_1}{\to} \rmH^{i+1}_\m(M)$$
given by Lemma \ref{4.4} (1), $\rmH_\m^i(\overline{M})$ is finitely generated for all $1 \le i \le r-1$. Therefore for all $0 \leq p \leq r-2$ we get by Lemma \ref{4.6}, Theorem \ref{19}, and the hypothesis of induction that 
\[\rmj_{p}(\xx'; \overline{M})= \rmj_{p}(\xx; M) = 0   \]
if and only if
\[ \rmH_\m^i(\overline{M}) = (0) \;\; \mbox{\rm for all} \;\; 1 \leq i \leq r-1-p.\]
By exact sequence $(\sharp_{i})$ the latter condition is equivalent to saying that $\rmH_\m^i(M) = (0)$ for all $1 \le i \le r-p$, because each $\rmH_\m^i(M)~(1 \le i \le r)$ has finite length, while for $p=r-1$, by Lemma \ref{4.6} we get
\[ (-1)^{r-1} \rmj_{r-1}(\xx ; M)=k_{r-1}(\xx; M) = h^0(M/x_{1}M) - h^0(M) = \lambda((0):_{\rmH^1_\m(M)}x_1).\] Thus equivalence of assertions (1) and (2)  follows.

See Theorem \ref{21-1} for the last assertion. 
\end{proof}

We now consider the problem of when the set $\Gamma(M)$ of 
\[ \chi_1(\xx ;M) = \sum_{i=1}^r(-1)^{i-1}\lambda(\rmH_i(\xx;M)) = h^0(M/(x_1,  \ldots, x_{r-1})M)\]
is finite, where $\xx=x_1, \ldots, x_r$ is an amenable partial system of parameters of $M$ which is a $d$-sequence relative to $M$.

\begin{Theorem}\label{5.5} Assume that there exists a system of parameters of $M$ which is a strong $d$-sequence relative to $M$. Then the following conditions are equivalent.
\begin{enumerate}
\item[{\rm (1)}] The set $\Gamma(M)$ is finite.
\item[{\rm (2)}] $\rmH^i_\m(M)$ is finitely generated for every $0 \le  i \le r-1$.
\end{enumerate}
\end{Theorem}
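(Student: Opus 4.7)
My approach mirrors the proof of Theorem~\ref{Finjplus}, replacing the $\mathbf{j}$-coefficients by the partial Euler characteristic. The key identification comes from Lemma~\ref{23a}, which gives $\chi_1(\xx;M) = h^0(M/(x_1,\ldots,x_{r-1})M)$, so that $\Gamma(M)$ is a set of non-negative integers built from $h^0$-values of quotients.

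For the implication $(2) \Rightarrow (1)$, which will not require the strong $d$-sequence hypothesis, I will iterate the long exact sequence of Lemma~\ref{4.4}(1). Setting $M_k = M/(x_1,\ldots,x_k)M$ and noting that each $x_{k+1}$ remains a $d$-sequence element on $M_k$ to which the lemma applies, the long exact sequence extracts short exact sequences that yield $h^j(M_{k+1}) \leq h^j(M_k) + h^{j+1}(M_k)$ for all $j \geq 0$. A routine double induction then produces the bound
\[h^0(M/(x_1,\ldots,x_k)M) \;\leq\; \sum_{j=0}^{k}\binom{k}{j}h^j(M),\]
and specializing to $k = r-1$ yields a uniform estimate for $\chi_1(\xx;M)$ depending only on $h^0(M), \ldots, h^{r-1}(M)$. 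These are finite under $(2)$, so $\Gamma(M)$ is finite.

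For $(1) \Rightarrow (2)$, I fix a system of parameters $a_1, \ldots, a_d$ of $M$ that is a strong $d$-sequence relative to $M$ and let $\Gamma_0 \subseteq \Gamma(M)$ denote the subset of values $\chi_1(a_1^{n_1}, \ldots, a_r^{n_r}; M)$ with positive integers $n_i$. Each such sequence is a partial system of parameters which is a $d$-sequence relative to $M$ (by the strong property) and amenable to $M$ (by Proposition~\ref{amproperseq}(3)); thus $\Gamma_0 \subseteq \Gamma(M)$ is finite, and I set $\ell = \max \Gamma_0$. The plan is to prove by induction on $r$ the strengthened statement that $\m^{\ell}\rmH^i_\m(M) = 0$ for all $0 \leq i \leq r-1$. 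The base case $r = 1$ reduces to $\lambda(\rmH^0_\m(M)) = h^0(M) \leq \ell$, which immediately gives $\m^{\ell}\rmH^0_\m(M) = 0$. For the inductive step, I will use that $a_2, \ldots, a_d$ is a strong $d$-sequence relative to $\overline{M}_{n_1} := M/a_1^{n_1}M$ (a standard consequence of how the $d$-sequence property descends to the quotient), together with the identification
\[\chi_1(a_2^{n_2}, \ldots, a_r^{n_r}; \overline{M}_{n_1}) = h^0(M/(a_1^{n_1}, \ldots, a_{r-1}^{n_{r-1}})M) \;\in\; \Gamma_0,\]
which places the corresponding subset for $\overline{M}_{n_1}$ inside $\Gamma_0$ uniformly in $n_1$. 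The inductive hypothesis then gives $\m^{\ell}\rmH^i_\m(\overline{M}_{n_1}) = 0$ for $0 \leq i \leq r-2$.

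Feeding this into the long exact sequence of Lemma~\ref{4.4}(1) shows that $\rmH^0_\m(M)$ embeds into $\rmH^0_\m(\overline{M}_{n_1})$ and, for $1 \leq i \leq r-1$, that $(0 :_{\rmH^i_\m(M)} a_1^{n_1})$ is a homomorphic image of $\rmH^{i-1}_\m(\overline{M}_{n_1})$. Both are annihilated by $\m^{\ell}$, and since every element of $\rmH^i_\m(M)$ is killed by some power of $a_1 \in \m$, we have $\rmH^i_\m(M) = \bigcup_{n_1 > 0}(0 :_{\rmH^i_\m(M)} a_1^{n_1})$, yielding $\m^{\ell}\rmH^i_\m(M) = 0$ for all $0 \leq i \leq r-1$. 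Since each $\rmH^i_\m(M)$ is Artinian, annihilation by $\m^{\ell}$ upgrades this to finite length, and hence to finite generation. The main obstacle will be securing a single annihilator exponent $\ell$ independent of the choice of $n_1$; this is precisely what the strong $d$-sequence hypothesis delivers, since it produces a rich family of $d$-sequence partial systems whose $\chi_1$-values form the single bounded set $\Gamma_0$.
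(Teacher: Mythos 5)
Your proposal is correct and follows essentially the same route as the paper: for $(2)\Rightarrow(1)$ the inequality $h^j(\overline{M})\le h^j(M)+h^{j+1}(M)$ extracted from Lemma \ref{4.4} (1) yields the bound $\chi_1(\xx;M)\le\sum_{j=0}^{r-1}\binom{r-1}{j}h^j(M)$, and for $(1)\Rightarrow(2)$ the paper likewise fixes the subset $\Gamma_0$ of values $\chi_1(a_1^{n_1},\ldots,a_r^{n_r};M)$ and inducts on $r$ via $\overline{M}=M/a_1^{n_1}M$, using that $(0):_{\rmH^i_\m(M)}a_1^{n_1}$ is a homomorphic image of $\rmH^{i-1}_\m(\overline{M})$ and that $n_1$ is arbitrary. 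The only cosmetic difference is that the paper bounds $\lambda(\rmH_\m^i(M))$ by $\ell$ directly, whereas you establish $\m^{\ell}\rmH_\m^i(M)=(0)$ and then appeal to Artinianness, which is exactly the device the paper employs in the proof of Theorem \ref{Finjplus}.
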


For the implication (2) $\Rightarrow$ (1) we do not need the assumption of  the existence of a system of parameters of $M$ which is a strong $d$-sequence relative to $M$.

\begin{proof}
(2) $\Rightarrow$ (1) 
Let $\xx=x_1, \ldots, x_r$ be an amenable partial system of parameters of $M$ which is a $d$-sequence relative to $M$. 
 We will show by induction on $r$ that 
$$\chi_1(\xx;M) \le \displaystyle\sum_{i=0}^{r-1}\binom{r-1}{i}h^i(M).$$ If $r = 1$, we have nothing to prove. Suppose that $r > 1$ and that our assertion holds true for $r-1$. Let $\overline{M} = M/x_1M$. Then by Lemma \ref{4.4} (1) $\rmH_\m^i(\overline{M})~(0 \le i \le r-2)$ is finitely generated with $h^i(\overline{M}) \le h^i(M) + h^{i+1}(M)$. Let $\xx'=x_2, \ldots, x_r$. Then
\[\begin{array}{rcl}
{\ds \chi_1(\xx;M) =\chi_1(\xx' ;\overline{M}) 
\le \sum_{i=0}^{r-2}\binom{r-2}{i} h^i(\overline{M})  }
&\le &  {\ds \sum_{i=0}^{r-2}\binom{r-2}{i} \left[h^i(M)+h^{i+1}(M)\right]}  \\
&=& {\ds  \sum_{i=0}^{r-1}\binom{r-1}{i} h^i(M).} \\
\end{array}
\]

 (1) $\Rightarrow$ (2) Let $a_1, \ldots, a_d$ be a system of parameters of $M$ which is a strong $d$-sequence relative to $M$ and let $\Gamma_0(M)$ be the set of all $\chi_1(a_1^{n_1},\ldots, a_r^{n_r};M)$ with integers $n_i >0$. Then $\Gamma_0(M) \subseteq \Gamma (M)$. We set $\ell = \max \Gamma_0(M)$ and show by induction on $r$ that 
$$\lambda(\rmH_\m^i(M)) \le \ell$$ for all $0 \le  i \le r-1$. If $r = 1$, then $h^0(M)= \chi_1(a_1^{n_1};M)   \in \Gamma_0(M)$ and the assertion is clear. Suppose that $r > 1$ and that the assertion holds true for $r-1$. Choose an arbitrary integer $n_1 > 0$ and let $\overline{M} = M/a_{1}^{n_{1}}M$ and consider the exact sequence
$$(\sharp_i)\ \ \ \cdots \to \rmH^i_\m(M) \overset{a_{1}^{n_{1}}}{\to} \rmH^i_\m(M) \to \rmH^i_\m(\overline{M}) \to \rmH^{i+1}_\m(M) \overset{x_1}{\to} \rmH^{i+1}_\m(M)\to \cdots.$$ 
Let $\Gamma_0(\overline{M})$ be the set of $\chi_1(a_2^{n_2}, \ldots, a_r^{n_r};\overline{M})$ with $n_i$ positive integers. Then since $$\chi_1(a_2^{n_2}, \ldots, a_r^{n_r};\overline{M}) = \chi_1(a_1^{n_1}, \ldots, a_r^{n_r};M),$$ $\Gamma_0(\overline{M}) \subseteq \Gamma_0(M)$, whence  the hypothesis of induction on $r$ guarantees  
$$\lambda(\rmH_\m^i(\overline{M})) \le \ell$$
for all $0 \le   i \le r-2$. From the exact sequence $(\sharp_i)$, since ${\ds \left[ (0):_{\rmH^{i+1}_\m(M)}  a_1^{n_{1}}\right]}$ is a homomorphic image of $\rmH^i_\m(\overline{M})$, we get $$\lambda\left( \left[  ( 0) :_{\rmH^{i+1}_\m(M)} a_1^{n_{1} } \right] \right) \le \ell$$ for all $0 \le i \le r-2$.  Since $n_{1}$ is arbitrary, this means that
\[ \lambda \left( \rmH^{i}_{\m} (M) \right) \leq \ell \;\; \mbox{\rm for all} \;\; 1 \leq i \leq r-1.\] Because $\rmH^0_\m(M) \subseteq \rmH^0_\m(\overline{M})$ by Lemma \ref{4.4} (1), we get $\lambda(\rmH^0_\m(M)) \le \ell$, which completes the proof.
\end{proof}

The proof of the implication (2) $\Rightarrow$ (1) of (\ref{5.5}) shows
$$0 \leq \chi_1(\xx;M) \le \displaystyle\sum_{i=0}^{r-1}\binom{r-1}{i} h^i(M)$$ for every amenable partial system $\xx = x_1, \ldots, x_r $ of parameters of $M$ which is a $d$-sequence relative to $M$. Let us add the following.

\begin{Proposition} Suppose that $\rmH^i_\m(M)$ is finitely generated for every  $0 \le  i \le r-1$. 
\begin{itemize}
\item[{\rm (1)}] Suppose that there exists a system of parameters $a_{1}, \ldots, a_{d}$ of $M$ which is a strong $d$-sequence relative to $M$.
Then 
\[ \sup_{n_1, \ldots, n_r >0}\chi_1(a_1^{n_1},\ldots, a_r^{n_r};M) = \displaystyle\sum_{i=0}^{r-1}\binom{r-1}{i} h^i(M).\]  
\item[{\rm (2)}] There always exists a system $x_1, \ldots, x_d$ of parameters of $M$ which is a $d$-sequence relative to $M$ such that $$\chi_1(x_1,\ldots, x_r;M) = \displaystyle\sum_{i=0}^{r-1}\binom{r-1}{i} h^i(M).$$ Therefore  $\max\Gamma (M) = \displaystyle\sum_{i=0}^{r-1}\binom{r-1}{i}h^i(M)$.
\end{itemize}
\end{Proposition}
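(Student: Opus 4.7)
The upper bound
$$\chi_1(\xx; M) \le \sum_{i=0}^{r-1} \binom{r-1}{i} h^i(M)$$
for every amenable partial system of parameters $\xx = x_1, \ldots, x_r$ that is a $d$-sequence relative to $M$ was already established in the proof of the implication $(2)\Rightarrow(1)$ of Theorem \ref{5.5}. So the plan is to exhibit sequences attaining this bound in the two settings of (1) and (2), whereupon the concluding equality $\max \Gamma(M) = \sum_{i=0}^{r-1}\binom{r-1}{i} h^i(M)$ follows from (2) together with that upper bound.

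For (1), I would argue by induction on $r$. The case $r=1$ is immediate from Lemma \ref{23a}, since $\chi_1(a_1^{n_1};M) = h^0(M)$ independently of $n_1$. Assuming the assertion for $r-1$, pick $n_1$ large enough that $a_1^{n_1}$ annihilates the finitely generated $\m$-torsion modules $\rmH_\m^i(M)$ for $0 \le i \le r-1$. Writing $\overline{M}=M/a_1^{n_1}M$, the long exact sequence of Lemma \ref{4.4}(1) breaks into short exact sequences
$$0 \to \rmH_\m^i(M) \to \rmH_\m^i(\overline{M}) \to \rmH_\m^{i+1}(M) \to 0 \qquad (0 \le i \le r-2),$$
giving $h^i(\overline{M}) = h^i(M) + h^{i+1}(M)$ in that range. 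Because $a_1^{n_1}, a_2^{m_2},\ldots, a_d^{m_d}$ is a $d$-sequence on $M$ for all positive exponents, the quotient sequence $a_2,\ldots,a_d$ is a strong $d$-sequence (and still a system of parameters) on $\overline{M}$. Lemma \ref{23a} identifies $\chi_1(a_1^{n_1},\ldots,a_r^{n_r};M)$ with $\chi_1(a_2^{n_2},\ldots,a_r^{n_r};\overline{M})$, so taking the sup over $n_2,\ldots,n_r$ and invoking the inductive hypothesis yields
$$\sum_{i=0}^{r-2}\binom{r-2}{i}h^i(\overline{M}) = \sum_{i=0}^{r-2}\binom{r-2}{i}\bigl(h^i(M) + h^{i+1}(M)\bigr) = \sum_{i=0}^{r-1}\binom{r-1}{i}h^i(M),$$
the last equality by Pascal's identity.

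For (2), since no strong $d$-sequence is assumed, I would construct a $d$-sequence system of parameters attaining the bound by enhancing the argument of Proposition \ref{ubiquityth}. Set $\overline{M}_k = M/(x_1,\ldots,x_k)M$, with $\overline{M}_0 = M$, and maintain the invariant that $\rmH_\m^i(\overline{M}_k)$ is finitely generated for $0 \le i \le r-k-1$. At the $(k{+}1)$-st step the common annihilator $\ann\bigl(\bigoplus_{i=0}^{r-k-1}\rmH_\m^i(\overline{M}_k)\bigr)$ is an $\m$-primary ideal; I intersect it with the $\m$-primary ideal $I \cap J_1 \cap \cdots \cap J_k$ in which Proposition \ref{ubiquityth} places $y_{k+1}$, and still avoid the finite union of non-maximal associated primes from that construction by prime avoidance. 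Raising to a suitable power then produces $x_{k+1}$ preserving the $d$-sequence property and annihilating $\rmH_\m^i(\overline{M}_k)$ for $0\leq i\leq r-k-1$, so Lemma \ref{4.4}(1) again forces $h^i(\overline{M}_{k+1}) = h^i(\overline{M}_k) + h^{i+1}(\overline{M}_k)$ for $0\leq i\leq r-k-2$, keeping the invariant alive. Iterating Pascal's identity through $k=0,1,\ldots,r-1$ delivers $h^0(\overline{M}_{r-1}) = \sum_{i=0}^{r-1}\binom{r-1}{i}h^i(M)$, which equals $\chi_1(x_1,\ldots,x_r;M)$ by Lemma \ref{23a}; the remaining parameters $x_{r+1},\ldots,x_d$ come from continuing the construction unconstrained.

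The main obstacle I anticipate is the compatibility check in part (2): one must ensure that enlarging the ideal with the fresh cohomological annihilator at each stage does not break the prime-avoidance step of Proposition \ref{ubiquityth}, nor the later choice of exponent large enough to secure the $d$-sequence identities. This is resolved by the observation that every ideal in play is $\m$-primary while every forbidden associated prime is strictly contained in $\m$, so prime avoidance remains applicable and the $d$-sequence conditions for sufficiently high powers are maintained exactly as in the original construction.
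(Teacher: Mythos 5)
Your proposal is correct. Part (1) is essentially the paper's own argument: fix $n_1$ large enough that $a_1^{n_1}$ kills the finitely generated modules $\rmH_\m^i(M)$ for $0\le i\le r-1$, split the long exact sequence of Lemma \ref{4.4} (1) into short exact sequences to get $h^i(\overline{M})=h^i(M)+h^{i+1}(M)$, pass to $\overline{M}=M/a_1^{n_1}M$ with the strong $d$-sequence $a_2,\ldots,a_d$, and finish with Pascal's identity; combined with the upper bound from the proof of $(2)\Rightarrow(1)$ of Theorem \ref{5.5}, this gives the stated equality. For part (2), however, you take a genuinely different route. The paper does not re-open the construction of Proposition \ref{ubiquityth}: it defines the single ideal $\fka(M)=\prod_{i=0}^{r-1}\fka_i(M)^{\binom{r-1}{i}}$ with $\fka_i(M)=\Ann_R\rmH_\m^i(M)$, invokes Proposition \ref{ubiquityth} once (with $I=\fka(M)$) to produce the entire $d$-sequence inside $\fka(M)$, and then runs the induction on $r$ by proving the containment $\fka(M)\subseteq\fka(\overline{M})$, which follows from $\fka_i(M)\fka_{i+1}(M)\subseteq\fka_i(\overline{M})$ together with Pascal's rule encoded in the binomial exponents; the tail $x_2,\ldots,x_d$ is then automatically inside the corresponding ideal for $\overline{M}$ and the inductive hypothesis applies verbatim. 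Your version instead interleaves a fresh cohomological annihilator into each step of the recursive construction, maintaining the invariant that $\rmH_\m^i(\overline{M}_k)$ has finite length for $0\le i\le r-k-1$. Both work: your justification that the added ideals are $\m$-primary (hence compatible with the prime avoidance and with taking high powers) is exactly the point that needs checking, and it holds. What the paper's packaging buys is that Proposition \ref{ubiquityth} is used as a black box and the induction is on the same, already-constructed sequence; what your version buys is that the role of Pascal's identity is visible directly in the cohomology of the successive quotients rather than hidden in the exponents of $\fka(M)$.
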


\begin{proof} (1)
We may assume that $r > 1$ and that the assertion holds true for $r-1$. Let $n >0$ be an integer such that $\m^n\rmH^i_\m(M)= (0)$ for all $0 \le  i \le r-1$. Let $\overline{M} = M/a_{1}^{n}M$. By Lemma \ref{4.4} (1) we then have $h^i(\overline{M}) = h^i(M) + h^{i+1}(M)$ for $0 \le i \le r-2$. Now choose integers $n_2, \ldots, n_r >0$ so that
$$\chi_1(a_2^{n_2}, \ldots, a_r^{n_r};\overline{M}) = \displaystyle\sum_{i=0}^{r-2}\binom{r-2}{i} h^i(\overline{M}).$$ 
Let $\aa= a_1^{n_1},\ldots, a_r^{n_r}$ and $\aa'=a_2^{n_2},\ldots, a_r^{n_r}$.
Then 
\[ \begin{array}{rcl}
{\ds \chi_1(\aa;M) = \chi_1(\aa';\overline{M}) =  \sum_{i=0}^{r-2}\binom{r-2}{i} h^i(\overline{M}) } 
&=& {\ds \sum_{i=0}^{r-2}\binom{r-2}{i} \left[h^i(M)+h^{i+1}(M)\right] } \\
&= &{\ds \sum_{i=0}^{r-1}\binom{r-1}{i} h^i(M).}\\
\end{array}\]

 (2) Let $\fka_i(M) = \operatorname{Ann}_R\rmH_\m^i(M)$ for each $0 \le i \le r-1$ and set $\fka(M) = \displaystyle\prod_{i=0}^{r-1}\fka_i(M)^{\binom{r-1}{i}}.$
Then since the ideal $\fka(M)$ contains a high power of $\m$, by (\ref{ubiquityth}) there is a system $x_1, \ldots, x_d$ of parameters of $M$ which is contained in $\fka(M)$ and is a $d$-sequence relative to $M$. We will show by induction on $r$ that $$\chi_1(x_1, \ldots, x_r;M) =\displaystyle\sum_{i=0}^{r-1}\binom{r-1}{i} h^i(M).$$ 
We may assume that $r > 1$ and that our assertion holds true for $r-1$. Let $\overline{M} = M/x_1M$. We then have, by Lemma \ref{4.4} (1), for each $0 \le i \le r-2$ the  short exact sequence
$$0 \to \rmH_\m^i(M) \to \rmH_\m^{i}(\overline{M}) \to \rmH_\m^{i+1}(M) \to 0,$$
because $x_1\rmH_\m^j(M) = (0)$ for $j = i, i+1$. Hence $h^i(\overline{M}) = h^i(M) + h^{i+1}(M)$ and $\fka_i(M)\fka_{i+1}(M) \subseteq \fka_i(\overline{M})$. Consequently $\fka(M) \subseteq \fka(\overline{M})$. Let $\xx=x_1, \ldots, x_r$ and $\xx'=x_2, \ldots, x_r$.  Then, since $x_2, \ldots, x_d \in \fka(\overline{M})$,  by the hypothesis of induction on $r$, we get 
\[ \begin{array}{rcl}
{\ds \chi_1(\xx;M) = \chi_1(\xx';\overline{M}) 
= \sum_{i=0}^{r-2}\binom{r-2}{i} h^i(\overline{M}) }
&=& {\ds \sum_{i=0}^{r-2}\binom{r-2}{i} \left[h^i(M)+h^{i+1}(M)\right]} \\
&=& {\ds \sum_{i=0}^{r-1}\binom{r-1}{i} h^i(M).}
\end{array}
\]
\end{proof}

\section{The structure of some $\mathbf{j}$-transforms}

The general outline of  $\mathbf{j}$-transforms $\H = \H_{\m}^0(\gr_I(M))$ is still unclear. 
In two cases however,  Buchsbaum 
 and sequentially Cohen-Macaulay modules,  one has a satisfying 
vista.

\subsubsection*{Buchsbaum modules}

Let $(R,\m, k)$ be a Noetherian local ring and $M$  a finite $R$-module of dimension $d \ge 2$. Let $x_1, \ldots, x_{d}$ be a system of parameters of $M$. We fix an integer $0 < r < d$  and set $I =(x_1, \ldots, x_r)$, $\rmG =\mathrm{gr}_I(R)$, $\rmG (M) = \mathrm{gr}_I(M)$, and $$\overline{\rmG} = \rmG/\m \rmG = k[\TT_1, \TT_2, \ldots, \TT_r],$$ where $\TT_i$ denotes the image of $f_i = x_i + I^2$ in $\overline{\rmG}$. 
Let us consider the $\mathbf{j}$-transform $\H = \rmH_\m^0(\rmG (M))$. The goal is  the following.

\begin{Theorem}\label{thm1} Suppose that $M$ is a Buchsbaum $R$-module. Then the following assertions hold true.
\begin{enumerate}
\item[$(1)$] Suppose that $\H \ne (0)$. Then 
\[ \dim \H = \left\{
\begin{array}{ll}
0 & \quad \mbox{if} \quad h^1(M)= h^2(M)= \cdots = h^r(M) = 0, 
\vspace{4mm}\\
r & \quad \mbox{otherwise}.
\end{array}
\right.\]
\item[$(2)$] $\H \cong \bigoplus_{i=0}^r\left[Z_i(i)\right]^{\oplus h^i(M)}$ as a graded $\rmG$-module.
\end{enumerate}
Here $Z_i = \operatorname{Syz}_{\overline{G}}^i(k)$ denotes the $i$-th syzygy module of the residue class field $ k = \overline{\rmG}/[\overline{\rmG}]_+$. The second exact sequence in {\rm Theorem~\ref{jdseqcx}} gives a graded minimal free resolution of the $\overline{\rmG}$-module $\H$, where we identify $\overline{\rmG} = R/\m \otimes_{R} S$ {\rm (cf. Corollary~\ref{vanishingofH})}. 
\end{Theorem}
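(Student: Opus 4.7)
The plan is to deduce both parts of the theorem from Theorem~\ref{jdseqcx} after reducing modulo $\fkm$. Since $M$ is Buchsbaum, every system of parameters is a $d^+$-sequence relative to $M$, so $\xx$ is an amenable $d$-sequence, and each $M/I_j M$ is again Buchsbaum; moreover $\fkm \rmH_\fkm^j(M) = 0$ for $j < d$. Using Lemma~\ref{4.4}(1)---whose long exact sequence here breaks into short exact pieces $0 \to \rmH_\fkm^j(M) \to \rmH_\fkm^j(\overline{M}) \to \rmH_\fkm^{j+1}(M) \to 0$ for $j < d-1$, because multiplication by $x_1$ annihilates the middle term---I would verify by induction on $r$ that $\fkm$ annihilates every $\rmH_i(\xx;M)$ for $i \ge 1$ and $\rmH_\fkm^0(M/IM)$. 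Consequently, tensoring with $S = R[\TT_1, \ldots, \TT_r]$ over $R$ factors through $S/\fkm S = \overline{\rmG}$, and the second acyclic complex of Theorem~\ref{jdseqcx} becomes a graded free $\overline{\rmG}$-resolution
\[
0 \to \overline{\rmG}(-r)^{\alpha_r} \to \cdots \to \overline{\rmG}(-1)^{\alpha_1} \to \overline{\rmG}^{\alpha_0} \to \H \to 0
\]
of $\H$, with $\alpha_i = \lambda(\rmH_i(\xx;M))$ for $i \ge 1$ and $\alpha_0 = h^0(M/IM)$.

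Minimality of this resolution follows because the differentials of the approximation complex $\mathcal{M}(\xx;M)$ are induced from the Koszul boundary, so after reducing modulo $\fkm$ all matrix entries lie in $(\TT_1, \ldots, \TT_r)\overline{\rmG}$. To compute the ranks, I would combine Proposition~\ref{4.6FLC} with Remark~\ref{jformula}, simplifying via the Vandermonde--Chu identity $\sum_m \binom{m}{i-1}\binom{r-1-m}{\ell} = \binom{r}{i+\ell}$ to obtain
\[
\alpha_i = \sum_{j=0}^{r-i}\binom{r}{i+j}\,h^j(M),
\]
which matches the $i$-th graded Betti numbers of $\bigoplus_{j=0}^r Z_j(j)^{\oplus h^j(M)}$ over $\overline{\rmG}$, since the minimal free resolution of each $Z_j(j)$ is a shifted truncation of the Koszul resolution of $k$.

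To promote this numerical coincidence to an $\overline{\rmG}$-module isomorphism, I would induct on $r$ using Lemma~\ref{ind1}(2)'s exact sequence
\[
0 \to W^*[-1] \to \H(M)[-1] \xrightarrow{f_1} \H(M) \to \H(\overline{M}) \to 0,
\]
where $\overline{M} = M/x_1 M$ is Buchsbaum, $W^* \cong \rmH_\fkm^0(M)$ sits in degree $0$, and $f_1$ corresponds to $\TT_1$. The inductive decomposition of $\H(\overline{M})$ over $k[\TT_2,\ldots,\TT_r]$, combined with the split sequences of Lemma~\ref{4.4}(1) (giving $h^j(\overline{M}) = h^j(M) + h^{j+1}(M)$ for $j < d-1$), lifts via a mapping-cone argument to the corresponding decomposition of $\H(M)$ over $\overline{\rmG}$: the $\rmH_\fkm^{j+1}(M)$ summand inside $\rmH_\fkm^j(\overline{M})$ pairs with the $W^*$ contribution to produce $Z_{j+1}(j+1)^{h^{j+1}(M)}$, while the $\rmH_\fkm^j(M)$ part persists as $Z_j(j)^{h^j(M)}$. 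Finally, $Z_0 = k$ has dimension $0$, whereas for $i \ge 1$ each $Z_i(i)$ is a nonzero higher syzygy of $k$ over the polynomial ring $\overline{\rmG}$ of Krull dimension $r$, hence maximal Cohen-Macaulay of dimension $r$; the dichotomy in assertion~(1) follows. The main obstacle is this inductive identification step: the connecting homomorphisms in the $x_1$-multiplication sequences must genuinely respect the module decomposition, not merely its numerical invariants, which requires tracking with care how generators of $[\H]_0 = \rmH_\fkm^0(M/IM)$ are apportioned among the Buchsbaum summands.
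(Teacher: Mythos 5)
Your reduction of the second complex in Theorem~\ref{jdseqcx} modulo $\fkm$, the observation that the resulting free resolution of $\H$ over $\overline{\rmG}$ is linear and hence minimal, and the Betti number count $\alpha_i=\sum_{j}\binom{r}{i+j}h^j(M)$ are all correct and agree with Remark~\ref{jformula}, Proposition~\ref{4.6FLC} and \cite[Lemma 4.14]{SV}; deducing assertion (1) from assertion (2) is also legitimate (the paper obtains (1) directly and more cheaply from Theorem~\ref{19} and Proposition~\ref{4.6FLC}). The genuine gap sits exactly where you flag ``the main obstacle'': graded Betti numbers do not determine the isomorphism class of a graded module, so matching the Betti numbers of $\H$ with those of $\bigoplus_j Z_j(j)^{\oplus h^j(M)}$ proves nothing about $\H$ itself, and the inductive ``mapping-cone'' identification is not carried out. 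Lemma~\ref{ind1}(2) only identifies $\H/f_1\H$ with $\H(\overline{M})$ and $(0):_{\H}f_1$ with $W^*$; a module is not determined by this quotient and this submodule, and nothing in your argument controls the extension data needed to lift a direct-sum decomposition of $\H(\overline{M})$ over $k[\TT_2,\dots,\TT_r]$ to one of $\H$ over $\overline{\rmG}$ compatibly with $f_1$. (A minor slip: for $1\le i\le r-1$ the module $Z_i(i)$ has depth $i$ and is not maximal Cohen--Macaulay, though it does have dimension $r$, which is all that (1) requires.)

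What is missing is precisely the structural input the paper supplies. After checking $\fkm\H=(0)$, the paper proves that $\H$, as a graded module over the polynomial ring $S=\overline{\rmG}$, is a \emph{Buchsbaum} $S$-module of dimension $r$; this uses Lemma~\ref{lemma2}, Proposition~\ref{prop3}, Corollary~\ref{cor4} (the initial forms $f_1,\dots,f_r$ act as a $d^+$-sequence on $\H$), Proposition~\ref{prop6} and \cite[Proposition 3.1]{G1}. It then invokes Goto's structure theorem for maximal Buchsbaum modules over regular local rings \cite[Theorem (1.1)]{G2}, which is the result that actually produces the decomposition $\H\cong\bigoplus_{i=0}^{r-1}E_i^{\oplus h^i(\H)}\oplus S^{\oplus q}$ into syzygy modules of $k$; the exponents are then pinned down by proving $h^i(\H)=h^i(M)$ for $i<r$ (induction via Lemma~\ref{ind1}) and by a St\"uckrad--Vogel multiplicity count giving $q=h^r(M)$. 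That this step cannot be replaced by numerology is illustrated by Example~\ref{ex2}: for a quasi-Buchsbaum but non-Buchsbaum ring the analogous exact sequence $0\to \overline{\rmG}/\overline{\rmG}_+\to \rmH(R)\to [\overline{\rmG}_+](1)\to 0$ does not split, so $\rmH(R)$ is not a direct sum of the $Z_i(i)$'s even though the Hilbert-series bookkeeping is identical to that of the direct sum. If you wish to keep your resolution-theoretic framing, you would still have to prove that the minimal free resolution of $\H$ decomposes as a direct sum of shifted truncations of the Koszul complex, which is equivalent to, and no easier than, the Buchsbaum structure theorem you are implicitly trying to avoid.
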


Let us  begin with the following.

\begin{Lemma}\label{lemma2} Let $\xx=x_{1}, \ldots, x_{r}$ be an amenable partial system of parameters of $M$ that is a $d$-sequence relative to $M$. Let $f_{i}=x_{i}+ I^{2}$ in $\rmG$ for each $i=1, \ldots, r$.
\begin{itemize}
\item[{\rm (1)}] The sequence $f_1, \ldots, f_r$  is a $d$-sequence relative to $\H$, $$\H/(f_1, f_2, \ldots, f_r) \H = [ \H/(f_1, f_2, \ldots, f_r) \H]_0,$$ and $[ \H/(f_1, f_2, \ldots, f_r) \H]_0 \cong \rmH_\m^0(M/IM)$ as an $R$-module.

\item[{\rm (2)}] $(f_1,\ldots, f_i) \rmG(M) \cap \H = (f_1,  \ldots, f_i) \H$ for all $1 \le i \le r$.
\end{itemize}
\end{Lemma}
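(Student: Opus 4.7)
The plan is to use Lemma \ref{ind1}(2) as the main engine and induct on $r$. That lemma supplies the exact sequence
\[0 \to W^{*}[-1] \to \H(M)[-1] \xrightarrow{f_{1}} \H(M) \xrightarrow{\tau} \H(\overline{M}) \to 0\]
with $\overline{M} = M/x_{1}M$, yielding two key facts: first, $\ker(f_{1}\cdot\colon\H(M) \to \H(M)) = W^{*}$ sits entirely in degree $0$; second, $\tau$ identifies $\H(M)/f_{1}\H(M)$ with $\H(\overline{M})$. The standard hereditary properties of $d$-sequences, combined with the Koszul long exact sequence for amenability, ensure that $x_{2},\ldots,x_{r}$ remains an amenable $d$-sequence on $\overline{M}$. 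Iterating the second identification $r$ times therefore yields $\H(M)/(f_{1},\ldots,f_{r})\H(M) \cong \rmH_\m^{0}(M/IM)$, concentrated in degree $0$, which settles (1)(b) and (1)(c). Truncating the iteration at stage $i-1$ gives the identification $\H(M)/(f_{1},\ldots,f_{i-1})\H(M) \cong \H(\overline{M}')$, with $\overline{M}' = M/(x_{1},\ldots,x_{i-1})M$, which will be used repeatedly below.

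Next I would prove (2) by induction on $i$. For $i=1$, the composite $\H(M) \hookrightarrow \rmG(M) \twoheadrightarrow \rmG(M)/f_{1}\rmG(M) \cong \rmG(\overline{M})$ has kernel $f_{1}\rmG(M) \cap \H(M)$, where the last isomorphism uses the $d$-sequence identity $x_{1}M \cap I^{n+1}M = x_{1}I^{n}M$ from \cite[Proposition~2.2]{Hu1}; on the other hand this composite coincides with the map $\tau$ of Lemma \ref{ind1}(2), whose kernel is $f_{1}\H(M)$, so the two kernels agree. For the inductive step, the iterated $d$-sequence identity yields $\rmG(M)/(f_{1},\ldots,f_{i-1})\rmG(M) \cong \rmG(\overline{M}')$, and combined with the inductive hypothesis for (2) at stage $i-1$ this identifies the image of $\H(M)$ in $\rmG(\overline{M}')$ with $\H(M)/(f_{1},\ldots,f_{i-1})\H(M) \cong \H(\overline{M}')$. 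Applying the $i=1$ case to $\overline{M}'$ with its amenable $d$-sequence $x_{i},\ldots,x_{r}$ and lifting back along the surjection then yields $(f_{1},\ldots,f_{i})\rmG(M) \cap \H(M) = (f_{1},\ldots,f_{i})\H(M)$.

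Finally, for the $d$-sequence property (1)(a), I would fix $1 \le i \le j \le r$ and pass to the quotient $\H(M)/(f_{1},\ldots,f_{i-1})\H(M) \cong \H(\overline{M}')$ from the first paragraph. The nontrivial inclusion reduces to showing that any homogeneous $\bar\eta \in \H(\overline{M}')$ with $f_{i}f_{j}\bar\eta = 0$ satisfies $f_{j}\bar\eta = 0$. Applying Lemma \ref{ind1}(2) to $\overline{M}'$ with sequence $x_{i},\ldots,x_{r}$, the kernel of $f_{i}\cdot\colon\H(\overline{M}') \to \H(\overline{M}')$ is concentrated in degree $0$; but $f_{j}\bar\eta$ has degree $\deg\bar\eta + 1 \ge 1$, which forces $f_{j}\bar\eta = 0$. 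The non-homogeneous case follows by decomposing into graded components. The main obstacle I anticipate is organizational rather than conceptual: one must verify carefully that amenability and the $d$-sequence property both descend through each quotient step of the induction, and in particular that the graded ring isomorphism $\rmG(M)/(f_{1},\ldots,f_{i-1})\rmG(M) \cong \rmG(\overline{M}')$ propagates correctly. These are standard consequences of \cite[Proposition~2.2]{Hu1} and the Koszul long exact sequence for multiplication by $x_{1}$, but they must be threaded through in the right order.
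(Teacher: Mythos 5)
Your proposal is correct and follows essentially the same route as the paper: both arguments run Lemma \ref{ind1}(2) through an induction on the length of the sequence, identifying $\H/(f_1,\ldots,f_{i-1})\H$ with the $\mathbf{j}$-transform of $M/(x_1,\ldots,x_{i-1})M$, using the surjectivity of the induced maps to reduce to the one-element case, and exploiting that $(0):_{\H}f_1$ is concentrated in degree $0$ for the $d$-sequence property. The only (harmless) divergence is the base case of (2): the paper argues directly that $h=f_1g\in\H$ forces $\m^{\ell}g\subseteq (0):_{\rmG(M)}f_1\subseteq\H$ and hence $g\in\H$, whereas you read the equality $f_1\rmG(M)\cap\H=f_1\H$ off the four-term exact sequence by comparing the two descriptions of $\ker \tau$; both are valid.
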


\begin{proof} (1) Note that $(0):_{\H}f_1 = [(0):_{\H}f_1 ]_0$ and $\rmH (\overline{M}) = \rmH/f_1\rmH$ when $r > 0$.
The assertion follows from Lemma \ref{ind1} and induction on $r$.

 (2) Let $h \in \H$ and assume that $h = f_1 g$ for some $g \in \rmG(M)$. We choose an integer $\ell \gg 0$ so that $\m^\ell \H = (0)$. Then since $\m^\ell h =(0)$, we get  $$\m^\ell g \subseteq (0):_{\rmG(M)}f_1 \subseteq H.$$  Thus $\m^{2\ell} g =(0)$ and hence $g \in \H$. Assume that $i > 1$ and that the assertion holds true for $i -1$. Let $\overline{M} = M/(x_1, \ldots, x_{i -1})M$. Let $h \in (f_1, \ldots, f_i) \rmG(M) \cap \H$.  Then $\overline{h} \in f_i \rmG (\overline{M})\cap \rmH (\overline{M})$, where $\overline{h}$ denotes the image of $h$ in $\rmG (\overline{M})$ under the canonical map $\psi : \rmG (M) \to \rmG (\overline{M})$ and $\rmH (\overline{M})$ denotes  the $\mathbf{j}$-transform of $\overline{M}$ relative to $I$. Because $\rmH (\overline{M}) = \psi (\H)$ by Lemma \ref{ind1} (2), from the case where $i = 1$ it follows that
$$h \in f_i \H + (f_1, \ldots, f_{i -1})\rmG (M).$$
Hence $h - f_i h' \in (f_1, \ldots, f_{i -1})\rmG (M)\cap \H$ for some $h' \in \H$, so that the hypothesis of induction on $i$ implies  $h - f_i h' \in (f_1, \ldots, f_{i -1}) \H$. Thus $h \in (f_1, \ldots, f_{i}) \H$.
\end{proof}

\begin{Proposition}\label{prop3}
$f_1, \ldots, f_r$ is a $\rmG (M)/ \H$-regular sequence.
\end{Proposition}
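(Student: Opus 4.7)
The plan is to proceed by induction on $r$, reducing modulo $f_1$ to pass from $M$ to $\overline{M}=M/x_1M$ and leaning on the two preparatory Lemmas \ref{ind1} and \ref{lemma2}. First, for the base case $r=1$ I would show that $f_1$ is a nonzerodivisor on $\rmG(M)/\H$ by the following short argument: given $g\in\rmG(M)$ with $f_1g\in\H$, pick $\ell\gg 0$ with $\m^\ell\H=(0)$, so that $f_1(\m^\ell g)=(0)$ in $\rmG(M)$. From the exact sequence produced in the proof of Lemma \ref{ind1}(2), the kernel of multiplication by $f_1$ on $\rmG(M)$ is $W^*[-1]$, where $W^*\cong\rmH_\m^0(M)$ is concentrated in degree zero and is $\m$-torsion; in particular $W^*\subseteq\H$. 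Therefore $\m^\ell g\subseteq\H$, and multiplying by $\m^\ell$ once more forces $g\in\rmH_\m^0(\rmG(M))=\H$.

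For the inductive step, once $f_1$ is known to be regular on $\rmG(M)/\H$, I would identify
\[
[\rmG(M)/\H]/f_1[\rmG(M)/\H]\;=\;\rmG(M)/[\H+f_1\rmG(M)]\;\cong\;\rmG(\overline{M})/\H(\overline{M}).
\]
The case $i=1$ of Lemma \ref{lemma2}(2) gives $\H\cap f_1\rmG(M)=f_1\H$, so the image of $\H$ in $\rmG(M)/f_1\rmG(M)$ equals $\H/f_1\H$, and Lemma \ref{ind1}(2) identifies this quotient with $\H(\overline{M})$. The $d$-sequence property provides the graded $\rmG$-module isomorphism $\rmG(M)/f_1\rmG(M)\cong\rmG(\overline{M})$, under which the image of $\H$ becomes exactly $\H(\overline{M})$. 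Since $x_2,\ldots,x_r$ is still an amenable partial system of parameters of $\overline{M}$ that is a $d$-sequence relative to $\overline{M}$ (a standard consequence of $\xx$ being a $d$-sequence on $M$), the inductive hypothesis applies to $\overline{M}$ with this shorter sequence and yields that the images of $f_2,\ldots,f_r$ form a regular sequence on $\rmG(\overline{M})/\H(\overline{M})$. Translating back via the isomorphism above shows that $f_2,\ldots,f_r$ is regular on $\rmG(M)/[\H+f_1\rmG(M)]$, completing the induction.

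The main technical obstacle is making the two identifications $\H/f_1\H\cong\H(\overline{M})$ and $\rmG(M)/f_1\rmG(M)\cong\rmG(\overline{M})$ precise and compatible as graded $\rmG$-modules; both rest on the $d$-sequence identities $x_1M\cap I^{n+1}M=x_1I^nM$ for all $n\ge 0$ and $[(0):_Mx_1]\cap IM=(0)$, which are exactly what drive Lemmas \ref{ind1} and \ref{lemma2}. Once those identifications are in hand, the rest of the argument is essentially formal: the base case collapses to a kernel computation for $f_1$ on $\rmG(M)$, and the inductive step is just the translation of regularity along the isomorphism $\rmG(M)/[\H+f_1\rmG(M)]\cong\rmG(\overline{M})/\H(\overline{M})$.
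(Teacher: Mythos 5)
Your proof is correct, and it reorganizes the argument rather than reproducing the paper's. The paper proves the statement directly for each index $i$: starting from $f_ig\in(f_1,\ldots,f_{i-1})\rmG(M)+\H$, it invokes the full intersection formula $(f_1,\ldots,f_i)\rmG(M)\cap\H=(f_1,\ldots,f_i)\H$ of Lemma \ref{lemma2}(2) to rewrite the relation, reduces modulo $(f_1,\ldots,f_{i-1})$ to $\rmG(M/I_{i-1}M)$, and finishes with the containment $(0):_{\rmG(\overline{M})}f_i\subseteq\rmH(\overline{M})$ together with the fact that $\rmH(\overline{M})$ is the image of $\H$ (Lemma \ref{ind1}(2)). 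You instead induct on $r$: your base case is, almost verbatim, the $i=1$ step of the paper's proof of Lemma \ref{lemma2}(2) (the $\m$-torsion argument using $(0):_{\rmG(M)}f_1=W^*\subseteq\H$), and your inductive step replaces the intersection formula for general $i$ by the single identification $[\rmG(M)/\H]/f_1[\rmG(M)/\H]\cong\rmG(\overline{M})/\rmH(\overline{M})$, which again rests on Lemma \ref{ind1}(2) and the equality $x_1M\cap I^{n+1}M=x_1I^nM$. What your route buys is that you never need Lemma \ref{lemma2}(2) beyond the case $i=1$; what it costs is the bookkeeping you flag yourself --- checking that $x_2,\ldots,x_r$ is again an amenable $d$-sequence relative to $\overline{M}$ (note that the amenability part uses the amenability of $\xx$ on $M$, not only the $d$-sequence property, though the paper also asserts this without proof at the start of Section 4) and that $\gr_I(\overline{M})=\gr_{(x_2,\ldots,x_r)}(\overline{M})$ with compatible actions of the initial forms, so that the inductive hypothesis genuinely applies. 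Both proofs ultimately turn on the same two facts: the kernel of $f_i$ on the relevant quotient lands inside the $\mathbf{j}$-transform, and the $\mathbf{j}$-transform of $\overline{M}$ is the image of $\H$.
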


\begin{proof}
 Let $1 \le i \le r$ be an integer. Let $g \in \rmG (M)$ and assume that $$f_i g \in (f_1, \ldots, f_{i-1})\rmG (M) + H.$$ Then $$f_i g - \sum_{j = 1}^{i-1}f_jg_j \in (f_1, \ldots, f_i)\rmG (M) \cap \H$$ with $g_j \in \rmG (M)$.  Since $ (f_1, \ldots, f_i)\rmG (M) \cap \H = (f_1, \ldots, f_i) \H$ by Lemma \ref{lemma2}, we get
$$f_i g - \sum_{j = 1}^{i-1}f_jg_j  = \sum_{j=1}^{i}f_jh_j$$
with $h_j \in \H$. Hence $f_i (g - h_i) = \sum_{j=1}^{i-1}f_j (h_j + g_j),$
so that $$g - h_i \in (f_1, \ldots, f_{i-1})\rmG (M):_{\rmG (M)}f_i.$$ Let $\overline{M} = M/I_{i-1}M$ and identify $$\rmG (M)/(f_1, \ldots, f_{i-1})\rmG (M) = \rmG (\overline{M}).$$ Then since $(0) :_{\rmG (\overline{M})}f_i \subseteq \rmH (\overline{M})$ and $\rmH (\overline{M})$ coincides with the image of $\H$ in $\rmG(\overline{M})$, we get $$g - h_{i} \in (f_1, \ldots, f_{i-1})\rmG (M) + H.$$ Thus $g \in (f_1, \ldots, f_{i-1})\rmG (M) + \H$, whence 
$f_1,\ldots,f_r$ is a $\rmG (M)/\rmH$-regular sequence. 
\end{proof}

\begin{Corollary}\label{cor4} Suppose that $x_1, \ldots, x_r$ is a  $d^+$-sequence relative to $M$. Then $f_1, \ldots, f_r$ is a $d^+$-sequence relative to $\H$. 
\end{Corollary}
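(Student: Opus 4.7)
The plan is to reduce the corollary to Proposition~\ref{prop3} and Lemma~\ref{2}: once we know that every permuted and power-upped version of $f_{1}, \ldots, f_{r}$ is regular on the quotient $\rmG(M)/\rmH$, Lemma~\ref{2} applied to $0 \to \rmH \to \rmG(M) \to \rmG(M)/\rmH \to 0$ will transfer the $d$-sequence property from $\rmG(M)/\rmH$ to $\rmH$. Unwinding the definition of a $d^{+}$-sequence, it suffices to fix a permutation $\sigma$ of $\{1, \ldots, r\}$ together with positive integers $n_{1}, \ldots, n_{r}$ and to verify that $f_{\sigma(1)}^{n_{1}}, \ldots, f_{\sigma(r)}^{n_{r}}$ is a $d$-sequence relative to $\rmH$.

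For the permutation step, I would reapply Proposition~\ref{prop3} to the rearranged sequence $x_{\sigma(1)}, \ldots, x_{\sigma(r)}$. Since this rearrangement generates the same ideal $I$, the associated graded module $\rmG(M)$ and its $\mathbf{j}$-transform $\rmH$ are unchanged. The rearranged sequence is still an amenable partial system of parameters of $M$ (amenability is symmetric in the entries because Koszul homology is), and by the $d^{+}$-hypothesis on $\xx$ it is a $d$-sequence relative to $M$ in this new order. Hence Proposition~\ref{prop3} applies and yields that $f_{\sigma(1)}, \ldots, f_{\sigma(r)}$ is $\rmG(M)/\rmH$-regular. This is precisely where the full strength of the $d^{+}$-hypothesis on $\xx$---as opposed to a plain $d$-sequence---is genuinely used.

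For the powers step, I would invoke the standard fact that if $a_{1}, \ldots, a_{k}$ is a regular sequence on a module $N$, then so is $a_{1}^{n_{1}}, \ldots, a_{k}^{n_{k}}$ for any positive integers $n_{i}$. The key point is that, $a_{1}$ being a non-zero-divisor on $N$, the finite filtration $N \supseteq a_{1}N \supseteq \cdots \supseteq a_{1}^{n_{1}}N$ has successive quotients all isomorphic to $N/a_{1}N$, so that $a_{2}$ remains regular on $N/a_{1}^{n_{1}}N$; a routine induction then handles the remaining indices. Applied to the output of the previous step, this produces a $\rmG(M)/\rmH$-regular sequence $f_{\sigma(1)}^{n_{1}}, \ldots, f_{\sigma(r)}^{n_{r}}$, which is in particular a $d$-sequence relative to $\rmG(M)/\rmH$.

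To finish, I would apply Lemma~\ref{2} to the exact sequence $0 \to \rmH \to \rmG(M) \to \rmG(M)/\rmH \to 0$ with the sequence $f_{\sigma(1)}^{n_{1}}, \ldots, f_{\sigma(r)}^{n_{r}}$: both hypotheses---$\rmG(M)/\rmH$-regularity and being a $d$-sequence on $\rmG(M)/\rmH$---are supplied by the previous two steps, so Lemma~\ref{2} forces the sequence to be a $d$-sequence relative to $\rmH$. Ranging over all $\sigma$ and $(n_{i})$ gives the corollary. I expect the only mildly non-formal obstacle to be the induction for powers; the rest is a direct reuse of Proposition~\ref{prop3} and Lemma~\ref{2}.
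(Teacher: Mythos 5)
Your overall architecture---the exact sequence $0 \to \rmH \to \rmG(M) \to \rmG(M)/\rmH \to 0$, regularity on the quotient via Proposition~\ref{prop3} reapplied to the permuted sequence, and the standard fact that powers of a regular sequence remain regular---matches the paper's, and your treatment of permutations and powers is a correct elaboration of what the paper leaves implicit. But there is a genuine gap at the final step: you feed Lemma~\ref{2} only hypotheses about the quotient $\rmG(M)/\rmH$. Lemma~\ref{2} as printed is misstated; as you half-notice yourself, ``a $Z$-regular sequence which is a $d$-sequence relative to $Z$'' is redundant, and with that literal reading the lemma is false: for an arbitrary module $X$ take $Y = X \oplus Z$ with $Z$ admitting a regular sequence, and nothing forces that sequence to be a $d$-sequence relative to $X$ (e.g.\ $X = R/(x^2)$, $Z = R = k[\![x]\!]$, $f_1 = x$). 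The intended and true statement---which is also how the lemma is actually used in the proof of Proposition~\ref{3}---requires the sequence to be a $d$-sequence relative to the \emph{middle} term $Y$, in addition to being regular on the quotient $Z$.

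The missing ingredient is therefore precisely the fact that $f_{\sigma(1)}^{n_1}, \ldots, f_{\sigma(r)}^{n_r}$ is a $d$-sequence relative to $\rmG(M)$ itself. This is not formal: it is the substantive input the paper's proof takes from \cite[Corollary 5.5]{T}, namely that the initial forms of a $d^+$-sequence relative to $M$ form a strong $d$-sequence on $\gr_I(M)$ in any order. Without some control on how the $f_i$ act on $\rmG(M)$, no amount of information about the quotient $\rmG(M)/\rmH$ can constrain their action on the submodule $\rmH$. Once you add this citation (or an independent proof of that fact) and apply the corrected form of Lemma~\ref{2} with both inputs---$d$-sequence on $\rmG(M)$ and regularity on $\rmG(M)/\rmH$---your argument closes up and coincides with the paper's.
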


\begin{proof}
 It suffices to check that $f_1, \ldots, f_r$ is a strong $d$-sequence relative to $\H$, which readily follows by definition from the facts that $f_1, \ldots, f_r$ is a strong $d$-sequence on $\rmG(M)$ (\cite[Corollary 5.5]{T}) and that $f_1, \ldots, f_r$ is a $\rmG(M)/\H$-regular sequence (Proposition \ref{prop3}). 
\end{proof}

 Let $\rmH_{\rmG_+}^i(*)$ denote the local cohomology functor relative to  the graded ideal $\rmG_+ =(f_1, \ldots, f_r)\rmG$.

\begin{Corollary}\label{cor5} We have the following.
\begin{enumerate}
\item[$(1)$] $\rmH_{\rmG_+}^i( \H) \cong \rmH_{\rmG_+}^i(\rmG (M))$ as a graded $\rmG$-module for all $i < r$.
\item[$(2)$] $[\rmH_{\rmG_+}^r( \H)]_n = (0)$ for all $n > -r$.
\end{enumerate}
\end{Corollary}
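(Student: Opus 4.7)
The plan is to derive both assertions by applying the functors $\rmH^\bullet_{\rmG_+}(-)$ to the short exact sequence
\[
0 \to \H \to \rmG (M) \to \rmG (M)/\H \to 0
\]
of graded $\rmG$-modules and reading off the long exact sequence
\[
\cdots \to \rmH^{i-1}_{\rmG_+}(\rmG(M)/\H) \to \rmH^i_{\rmG_+}(\H) \to \rmH^i_{\rmG_+}(\rmG(M)) \to \rmH^i_{\rmG_+}(\rmG(M)/\H) \to \cdots.
\]

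For (1), I would invoke Proposition \ref{prop3}: $f_1, \ldots, f_r$ is regular on $\rmG(M)/\H$, and since these elements generate the ideal $\rmG_+$, the $\rmG_+$-depth of $\rmG(M)/\H$ is at least $r$. Consequently $\rmH^j_{\rmG_+}(\rmG(M)/\H) = 0$ for every $j < r$. For any $i < r$ one has $i-1 < r$ and $i < r$, so the two terms flanking $\rmH^i_{\rmG_+}(\H) \to \rmH^i_{\rmG_+}(\rmG(M))$ vanish and the map is an isomorphism.

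For (2), the key input is Theorem \ref{14}(2): $\H = \rmG \cdot [\H]_0$, so $\H$ is a graded $\rmG$-module generated in degree $0$. Since $[\H]_0 = \rmH^0_\m(M/IM)$ has finite $R$-length, pick a finite generating set $h_1, \ldots, h_s$ of $[\H]_0$ over $R$. Introduce the polynomial ring $S = R[\TT_1, \ldots, \TT_r]$ together with the graded $R$-algebra surjection $\pi : S \twoheadrightarrow \rmG$ sending $\TT_i \mapsto f_i$ (surjective because $\rmG$ is generated over $R/I = [\rmG]_0$ by the initial forms $f_1, \ldots, f_r$). Viewing $\H$ as a graded $S$-module via $\pi$, the equality $\pi((\TT_1, \ldots, \TT_r)S) = \rmG_+$ gives the identification $\rmH^i_{\rmG_+}(\H) = \rmH^i_{(\TT)S}(\H)$ as graded abelian groups, since the two Čech complexes localize at the same elements. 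The $S$-linear map $S^{\oplus s} \to \H$, $e_i \mapsto h_i$, is then a graded surjection with generators in degree $0$; letting $K$ denote its kernel, the vanishing $\rmH^{r+1}_{(\TT)S}(K) = 0$ (the ideal $(\TT)S$ is generated by $r$ elements) yields a graded surjection $\rmH^r_{(\TT)S}(S)^{\oplus s} \twoheadrightarrow \rmH^r_{(\TT)S}(\H)$. A direct Čech-complex computation identifies $\rmH^r_{(\TT)S}(S)$ with the free $R$-module spanned by the monomials $\TT_1^{-b_1}\cdots\TT_r^{-b_r}$ with each $b_i \ge 1$, which is concentrated in degrees $\le -r$. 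This forces $[\rmH^r_{\rmG_+}(\H)]_n = 0$ for every $n > -r$.

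The routine portion is (1); the main bookkeeping difficulty lies in (2), where one must carefully track the passage between the $\rmG$- and $S$-module structures on $\H$ and verify that the generation of $\H$ in degree $0$ over $\rmG$ transfers to generation by the same set over $S$. The closing degree computation for $\rmH^r_{(\TT)S}(S)$ is classical but is precisely what makes the bound $n > -r$ sharp.
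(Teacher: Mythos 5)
Your proposal is correct and follows essentially the same route as the paper: part (1) is the long exact sequence of $\rmH_{\rmG_+}^\bullet(-)$ applied to $0 \to \H \to \rmG(M) \to \rmG(M)/\H \to 0$ together with the regularity of $f_1,\ldots,f_r$ on $\rmG(M)/\H$ from Proposition \ref{prop3}, and part (2) is exactly the paper's one-line justification ($\rmG_+$ needs at most $r$ generators and $\H=\rmG{\cdot}\rmH_0$) spelled out via a degree-zero presentation and the standard computation of $\rmH^r_{(\TT)S}(S)$. The only difference is that you supply the bookkeeping the paper leaves implicit; the mathematics is the same.
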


\begin{proof}
 Apply $\rmH_{\rmG_+}^i(*)$ to the exact sequence
$$0 \to \H \to \rmG (M) \to \rmG (M)/ \H \to 0$$
of graded $\rmG$-modules and we get the assertion (1), because
$f_1, \ldots, f_r$ is a $\rmG (\H)/ \H$-regular sequence. Assertion (2) follows from the fact that $\mu_\rmG(\rmG_+)\le r$ and $\H = \rmG{\cdot}\rmH_0$.
\end{proof}

When $x_1, \ldots, x_r$ is a $d^+$-sequence relative to $M$, we furthermore have the following.

\begin{Proposition}\label{prop6} Let $I=(\xx) = (x_1, \ldots, x_r )$ be an ideal generated by a $d^+$-sequence relative to $M$. 
\begin{enumerate}
\item[$(1)$] $\rmH_{\rmG_+}^i( \H) = [\rmH_{\rmG_+}^i( \H)]_{-i}$ for all $i < r$.  
\item[$(2)$] Suppose that $\dim \H = r$. Then the $\rmG$-module $\H$ has {\rm FLC},  so that $\H$ is a Cohen-Macaulay $\rmG$-module, if $\rmj_1(\xx;M) = 0$ and $\depth M > 0$.
\end{enumerate}
\end{Proposition}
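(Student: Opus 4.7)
The plan is to exploit the $d^+$-sequence property of the initial forms $f_1, \ldots, f_r$ on $\H$ (Corollary \ref{cor4}) together with the generation $\H = \rmG \cdot \H_0$ (Theorem \ref{14}), proceeding by induction on $r$. Throughout, observe that $f_1, \ldots, f_r$ being a $d^+$-sequence on $\H$ means that for any permutation and any positive powers, the resulting sequence remains a $d$-sequence on $\H$.

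For part (1), in the base case $r=1$ (so $i=0$), the $d$-sequence identity yields $(0):_\H f_1 = (0):_\H f_1^n$ for all $n \ge 1$, so $\rmH^0_{\rmG_+}(\H) = (0):_\H f_1$; writing a homogeneous element of positive degree as an $\rmG$-combination of elements of $\H_0$ and exploiting the $d$-sequence annihilation forces it to vanish. For the inductive step, pass to $\overline{\H} = \H/f_1\H$, which by Lemma \ref{ind1}(2) is identified with $\rmH(M/x_1M)$; the sequence $x_2, \ldots, x_r$ is again a $d^+$-sequence on $M/x_1M$ by permutation-closure, so the inductive hypothesis gives concentration of $\rmH^i_{\overline{\rmG}_+}(\overline{\H})$ in degree $-i$ for $i < r-1$. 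The long exact sequence in $\rmG_+$-local cohomology attached to
\[ 0 \to \H[-1]/W^*[-1] \xrightarrow{f_1} \H \to \rmH(\overline{M}) \to 0 \]
(extracted from Lemma \ref{ind1}(2), with $W^* = (0):_\H f_1$ concentrated in degree $0$) then propagates the concentration to $\rmH^i_{\rmG_+}(\H)$ in degree $-i$ for all $i < r$, after careful tracking of the degree shifts.

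For part (2), by (1) each $\rmH^i_{\rmG_+}(\H)$ for $i < r$ lives in a single graded component and is therefore annihilated by $\rmG_+$, so it is naturally an $R$-module. Since $\H$ is finitely generated over $\rmG$, each such local cohomology module is Artinian in that single degree, hence of finite length. The Grothendieck spectral sequence $\rmH^p_\m(\rmH^q_{\rmG_+}(\H)) \Rightarrow \rmH^{p+q}_{\m\rmG + \rmG_+}(\H)$ then collapses, giving $\rmH^i_{\m\rmG + \rmG_+}(\H) = \rmH^i_{\rmG_+}(\H)$ of finite length for $i < r$ and thereby proving that $\H$ has FLC. For the Cohen-Macaulay assertion under the additional hypotheses $\rmj_1(\xx;M) = 0$ and $\depth M > 0$, I would argue that the $d^+$-structure together with $\depth M > 0$ (which gives $\rmH^0_\m(M) = 0$ and hence $x_1$ is $M$-regular, then propagates inductively to $\overline{M}$) forces $\H$ to be unmixed of dimension $r$. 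Granted unmixedness, the identity $\rmj_1(\xx; M) = \rme_1(\xx^*; \H) = 0$ combined with \cite[Theorem 3.1]{chern5} (as cited in the question after Corollary \ref{jdseqCor}) forces $\H$ to be Cohen-Macaulay.

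The main obstacle is twofold: in part (1), carefully tracking the graded degree shifts through the long exact sequence so that they combine to give exactly the degree $-i$ concentration (rather than a range of degrees); and in part (2), rigorously establishing the unmixedness of $\H$ from the $d^+$-hypothesis combined with $\depth M > 0$, since the available comparison (Theorem \ref{21-1}) requires $\depth M \ge r-1$ and we must leverage the strictly stronger $d^+$-structure to bridge this gap.
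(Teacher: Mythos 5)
There is a genuine gap in your treatment of part (1), and it sits exactly where you flag ``careful tracking of the degree shifts.'' The long exact sequence of $\rmH_{\rmG_+}^{\bullet}(-)$ attached to $0 \to \H[-1]/W^*[-1] \overset{f_1}{\to} \H \to \rmH(\overline{M}) \to 0$, combined with the inductive concentration of $\rmH_{\rmG_+}^i(\rmH(\overline{M}))$ in degree $-i$, only tells you that multiplication by $f_1$ maps $[\rmH_{\rmG_+}^i(\H)]_{n-1}$ onto $[\rmH_{\rmG_+}^i(\H)]_{n}$ for $n \ne -i$ and injectively for $n \ne -i+1$. The bijectivity for $n \ge -i+2$ together with vanishing in degrees $n \gg 0$ does kill the components in degrees $\ge -i+1$, but in the range $n \le -i-1$ you only learn that all the components are mutually isomorphic, and nothing in your argument forces them to vanish: graded local cohomology below the top cohomological degree need not vanish as $n \to -\infty$ (e.g.\ $\rmH^1_{A_+}(A/(T_1)\oplus A)$ over $A=k[T_1,T_2]$ is nonzero in every degree $\le -1$), and the torsion of each class under powers of $f_1$ is consumed before the degree reaches $-i$, so no contradiction arises. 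Closing this is precisely the content of Brodmann's theorem for form rings of ideals generated by $d^+$-sequences, whose proof exploits the unconditioned strong $d$-sequence property in a more intricate way than a single application of Lemma \ref{ind1}(2). The paper sidesteps the whole issue: Proposition \ref{prop3} shows $f_1,\ldots,f_r$ is a regular sequence on $\rmG(M)/\H$, so Corollary \ref{cor5} gives $\rmH_{\rmG_+}^i(\H) \cong \rmH_{\rmG_+}^i(\rmG(M))$ for $i<r$, and then \cite[(5.1) Theorem]{Brodmann} applied to $\rmG(M)=\gr_I(M)$ yields the concentration in degree $-i$. Your plan never invokes Corollary \ref{cor5} or Proposition \ref{prop3}, so you are in effect committing to reprove Brodmann's theorem, and the proposal does not do so.

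Part (2) is essentially fine once (1) is granted: each $\rmH_{\rmG_+}^i(\H)$ ($i<r$) is a single finite-length component, and since a power of $\m$ kills $\H$ one has $\rmH^i_{\fkM}(\H)=\rmH^i_{\rmG_+}(\H)$ for $\fkM=\m\rmG+\rmG_+$ directly, no spectral sequence needed. But the unmixedness you worry about is much easier than you suggest: $\depth M>0$ gives $W=\rmH_\m^0(M)=(0)$, hence $W^*=(0):_{\H}f_1=(0)$ by Lemma \ref{ind1} (2), so $\depth_{\fkM}\H>0$; together with FLC and $\dim\H=r$ this already makes $\H$ unmixed. Then $\e_1(\ff;\H)=\rmj_1(\xx;M)=0$ and \cite[Theorem 3.1]{chern5} give the Cohen--Macaulayness, exactly as in the paper; no strengthening of Theorem \ref{21-1} is required.
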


\begin{proof}
By Corollary \ref{cor5} and \cite[(5.1) Theorem]{Brodmann} we get 
$\rmH_{\rmG_+}^i( \H) \cong \rmH_{\rmG_+}^i(\rmG (M)) = [\rmH_{\rmG_+}^i(\rmG (M))]_{-i},$ whence the assertion follows. The second assertion is  clear, because $\e_1(\mathbf{f};\H) = \rmj_1(\xx;M)$, where $\mathbf{f} = f_1, \ldots, f_r$ (notice that $\H$ is unmixed, if $\depth M > 0$).
\end{proof}

We are in a position to prove Theorem \ref{thm1}

\medskip

 {\bf {\em Proof of Theorem~\ref{thm1}.}} We remember that $x_1, \dots, x_d$ is a standard system of parameters of $M$, since $M$ is a Buchsbaum $R$-module.

 (1) By Theorem \ref{19}
\[ \dim \rmH = r- \min \{0 \le i \le r \mid k_{i}(\xx;M) \ne 0\}\] and by Proposition \ref{4.6FLC} 
\[  k_i(\xx; M) = \sum_{j=1}^{r-i}\binom{r-i -1}{j-1}h^j(M) \quad \mbox{\rm for all}\;\; 0 \leq i \leq r-1. \]
Therefore, if $h^1(M)= h^2(M)= \cdots = h^r(M) = 0$, then 
\[ k_{i}(\xx; M) =0 \quad \mbox{\rm for all}\;\; 0 \leq i \leq r-1, \]
while $k_{r}(\xx; M) = h^{0}(M) \neq 0$ by Theorem \ref{19}, because $\H \neq (0)$.  Hence $\dim \H=0$. If $h^{i}(M) \neq 0$ for some $1 \leq i \leq r$, then $k_{0}(\xx ; M) \neq 0$ by Proposition \ref{4.6FLC}, whence $\dim \H=r$.

 (2)  We may assume that $\H \ne (0)$ and $\dim \H= r$.  Let $S = \overline{\rmG}$ and $\fkM = S_+$. Then $\H$ is a graded $S$-module, because $\fkm \rmH = (0)$ (remember that $\fkm \rmH_\fkm^0(M/IM) = (0)$, since $r < d$). Hence $\H$ is a Buchsbaum $S$-module by Proposition \ref{prop6} and \cite[Proposition 3.1]{G1}. Consequently, by \cite[Theorem (1.1)]{G2} applied to the $S_\fkM$-module $\H_\fkM$, we get an isomorphism
$$(D_0)\ \ \ \H_\fkM \cong \bigoplus_{i=0}^{r-1}(E_i)_\fkM^{\oplus h^i(\H)} \oplus S_\fkM^{\oplus q}$$ of $S_\fkM$-modules, where $q \ge 0$ is an integer.  Taking the graded module $\gr_{\fkn}(*)$  (here $\fkn = \fkM S_\fkM$) of both sides  in $(D_0)$, we get a required direct sum decomposition $$(D)\ \ \ \H \cong \bigoplus_{i=0}^{r-1}E_i^{\oplus h^i(\H)} \oplus S^{\oplus q}$$ of $\rmH$ as a graded $S$-module, because $\gr_{\fkn}(\H_\fkM) = \H$ and $\gr_\fkn((E_i)_\fkM) = E_i$ (remember that $\H = S{\cdot}\H_0$ and $E_i = S{\cdot}[E_i]_0$). On the other hand, since $f_1\rmH_\fkM^i(\rmH) = (0)$ for all $i \ne r$ and $\rmH/f_1\rmH = \rmH(M/x_1M)$ by Lemma \ref{ind1}, we have  the exact sequence
$$ 0 \to \rmH^{i-1}_\fkM(\H) \to \rmH^{i}_\fkM(\rmH(M/x_1M)) \to \rmH^{i+1}_\fkM( \H)(-1) \to 0$$ of graded $S$-modules. The induction on $i$ now shows that $h^i( \H) = h^i(M)$ for all $i < r$,  while we have by decomposition $(D)$ of $\rmH$ that $$\operatorname{rank}_S \H = \sum_{i=1}^{r-1}\binom{r-1}{i-1}h^i(M) + q,$$ since $\operatorname{rank}_SE_i = \binom{r-1}{i-1}$. Let $\Bbb I(\H) = \sum_{i=0}^{r-1}\binom{r-1}{i}h^i(\H)$ be the St{\"u}ckrad-Vogel invariant of $\H$. Therefore,  because $\rmH$ is a Buchsbaum $S$-module, we have
\[ \begin{array}{rcll}
\operatorname{rank}_S \H &=& {\ds \e_0(\TT_1, \ldots, \TT_r; \H) } & \quad \\ &&& \\
&=& {\ds \lambda(\H/(\TT_1, \ldots, \TT_r) \H) - \Bbb I (\H) } & \quad \text{\cite[Proposition 2.6]{SV}}\\ &&& \\
&=& {\ds h^0(M/IM) - \sum_{i=0}^{r-1}\binom{r-1}{i}h^i(M) } & \quad \text{Lemma \ref{lemma2}}\\ &&& \\
&=& {\ds  \sum_{i=0}^{r}\binom{r}{i}h^i(M) -\sum_{i=0}^{r-1}\binom{r-1}{i}h^i(M) } & \quad \text{\cite[Lemma 4.14]{SV}}  \\ &&& \\
&=& {\ds \sum_{i=1}^{r-1}\binom{r-1}{i-1}h^i(M) + h^r(M). } & \\
\end{array}
\]
Thus $q = h^r(M)$, which completes the proof of Theorem \ref{thm1}.
\QED

\begin{Remark} {\rm 
(1) Suppose that $M$ has FLC and our system $x_1, \ldots, x_d$ of parameters of $M$ is standard. If $\H \ne (0)$ and $\dim \H = r$, then $\H$ has FLC and $h^i( \H) = h^i(M)$ for all $i < r$.

(2) It is known that $M$ is a quasi-Buchsbaum $R$-module, that is $\m \rmH_\m^i(M)=(0)$ for all $i \ne d$ if and only if for each $1 \le i \le d$ and for every system $a_1, \ldots, a_d$ of parameters of $M$ such that $a_1,  \ldots, a_d \in \m^2$, one has the equality
$$(a_1,  \ldots, a_{i-1})M:_Ma_i = (a_1, \ldots, a_{i-1})M:_M \m.$$
Hence if $M$ is quasi-Buchsbaum and $x_1, \ldots, x_r \in \m^2$, then $x_1,  \ldots, x_r$ forms a $d^+$-sequence relative to $M$, $\m \rmH_\m^0(M/IM) = (0)$, and $h^0(M/IM) = \sum_{i=0}^r\binom{r}{i}h^i(M).$ Therefore the proof of Theorem \ref{thm1} works also in  this case.

}\end{Remark}

We explore one example.

\begin{Example}\label{ex2}{\rm 
Let $P = k[\![Y_1, Y_2, Y_3, Z_1, Z_2, Z_3]\!]$ be the formal power series ring over a field $k$ and consider the local ring
$$R = P/\left[\fkp \cap \fkq \cap (Y_1^2, Y_2, Y_3, Z_1^2, Z_2,Z_3)\right],$$
where $\fkp = (Y_1, Y_2,Y_3)$ and $\fkq = (Z_1, Z_2, Z_3)$. Then $R$ is a quasi-Buchsbaum but non-Buchsbaum local ring of dimension $3$ such that  $$h^0(R) = h^1(R) = 1\ \ \text{and}\ \  h^i(R) = 0 \ \ \text{if}\ \ i \ne 0,  1, 3$$
(\cite[p. 87, Example]{SV}). For each $1 \le i \le 3$ let $y_i$ and $z_i$ respectively denote the images of $Y_i$ and $Z_i$ in $R$. Let $x_i = y_i + z_i$~$(1 \le i \le 3)$. Then 
$$\lambda(R/(x_1, x_2, x_3)) = 5, \ \ \e_0((x_1, x_2, x_3),R) = 2, \ \ \text{and}\ \  \Bbb I(R) = 3,$$ so that the system $x_1, x_2, x_3$ of parameters of $R$ is standard (\cite[Theorem 2.1]{T}). 

We set  $I = (x_1, x_2)$ and consider the $\mathbf{j}$-transform $\rmH(R)$ of $R$ relative to $I$.  Let $W = \H_{\m}^0(R)$, $\fkM = \m \rmG + \rmG_+$, and $\overline{\rmG} = \rmG/\m \rmG$. We then have the following. Remember that $f_1, f_2$ acts on $\rmH (R)$ as a $d^+$-sequence, since $x_1, x_2$ is a $d^+$-sequence relative to $R$. 

(1) There exists an exact sequence 
$$(\sharp) \ \ 0 \to \overline{\rmG}/\overline{\rmG}_+ \to \rmH(R) \to [\overline{\rmG}_+](1) \to 0$$
of graded $\rmG$-modules. Hence $\rmH(R)$ is a quasi-Buchsbaum $\rmG$-module.

(2) $\m \rmH(R) \ne (0)$ and $\fkM \rmH^0_\fkM(\rmH(R)/f_1\rmH (R)) \ne (0)$. Hence $\rmH(R)$ is not a Buchsbaum $\rmG$-module and exact sequence $(\sharp)$ above cannot be  split.
}\end{Example}

\begin{proof}
(1) Since $R/W = P/\fkp \cap \fkq$, $R/W$ is a Buchsbaum local ring. Hence by Theorem \ref{thm1} $$\rmH(R/W) \cong [\overline{\rmG}_+](1)$$ as a graded $\rmG$-module. The exact sequence now follows from the canonical exact sequence 
$$0 \to W^* \to \rmH(R) \to \rmH(R/W) \to 0$$
of $\mathbf{j}$-transforms, since $W^*\cong \overline{\rmG}/\overline{\rmG}_+$. Therefore, applying the functor $\rmH_\fkM^i(*)$, we get $$\rmH_\fkM^0(\rmH(R)) = W^*\  \text{and}\  \rmH_\fkM^1(\rmH(R)) = \rmH_\fkM^1(\overline{\rmG}_+)(1) \cong \left[\overline{\rmG}/\overline{\rmG}_+\right](1).$$ Thus $\rmH (R)$ is a quasi-Buchsbaum $\rmG$-module.

 (2) We have $\m \rmH_\m^0(R/x_1R) \ne (0)$ and $\m \rmH_\m^0(R/(x_1,x_2)) \ne (0)$, both of which we can check by  a direct computation. Hence exact sequence $(\sharp)$ above  is not split, as $\m \rmH(R) \ne (0)$ (notice that $\m   [\rmH(R)]_0 = \m \rmH_\m^0(R/(x_1, x_2)) \ne (0)$), while $$\fkM  \rmH^0_\fkM (\rmH(R)/f_1\rmH (R)) = \fkM \rmH^0_\fkM (\rmH(R/x_1 R)) \ne (0)$$ because $\m  \rmH_\m^0(R/x_1 R)) \ne (0)$. Thus $\rmH(R)$ is not  a Buchsbaum $\rmG$-module. 
\end{proof}

In Example \ref{ex2},  if we choose a partial system $x_1, x_2$ of parameters of $R$ so that $x_1, x_2 \in \m^2$, we always have $$\rmH (R) \cong \left[\overline{\rmG}/\overline{\rmG}_+\right] \oplus \left[\overline{\rmG}_+\right](1),$$ which shows the structure of $\mathbf{j}$-transforms depends more or less on the choice of partial systems of parameters.

\medskip

\subsubsection*{Sequentially Cohen-Macaulay modules}
Let $(R,\m)$ be a Noetherian local ring and $M$  a finite $R$-module of dimension $d = \dim S M >0$. Let $$\calS = \{\dim N \mid (0) \ne N \subseteq M, \text{an}~{R}\text{-submodule~of}~ M\}.$$ We set $\ell = \sharp \calS$ and write
$\calS = \{d_1 < d_2 < \ldots < d_\ell = d\}.$
Let  $d_0 = 0$. We then have the dimension filtration 
$$\calD_0= (0) \subsetneq \calD_1 \subsetneq D_2 \subsetneq \ldots \subsetneq \calD_\ell = M$$ of $M$, where each $\calD_i~(1 \le i \le \ell)$ is the largest $R$-submodule of $M$ with $\dim \calD_i = d_i$. We put $\calC_i = \calD_i/\calD_{i-1}~(1 \le i \le \ell)$ and assume that $\calC_i$ is a Cohen-Macaulay $R$-module, necessarily of dimension $d_i$, for each $1 \le i \le \ell$. Hence $M$ is a sequentially Cohen-Macaulay $R$-module.

We choose a system $x_1, \ldots, x_d$ of parameters of $M$ so that $$(x_j \mid d_i < j \le d)M \cap \calD_i = (0)$$ for all $1 \le i \le \ell$. Such a system of parameters exists and called a {\it good} system of parameters of $M$. Here we notice that  the condition $(x_j \mid d_i < j \le d)M \cap \calD_i = (0)$ for all $1 \le i \le \ell$ is equivalent to saying that $$(x_j \mid d_i < j \le d)\calD_i = (0)$$ for all $1 \le i \le \ell$, because  $M$ is a sequentially Cohen-Macaulay $R$-module.

Let $0 \le r \le d$ and $I = (x_1, \ldots, x_r)$. We are trying to find what the $\mathbf{j}$-transform $\rmH(M)$ of $M$ relative to $I$ is. The goal is the following. 

\begin{Theorem}\label{thm2}
Let $q = \max \{0 \le i \le \ell \mid d_i \le r\}$. Then $$\rmH(M) \cong \mathrm{gr}_I(\calD_q)$$
as a graded $\rmG$-module, where $\rmG = \mathrm{gr}_I(R)$.  If $q > 0$, that is if $d_1 \le r$, then $\rmH(M) \ne (0)$ and is a sequentially Cohen-Macaulay $\rmG$-module with dimension filtration $\{\mathrm{gr}_I(\calD_i)\}_{0 \le i \le q}$; hence $\dim \rmH(M) = d_q \le r$ and the Hilbert function of $\rmH(M)$ is given by
$$\displaystyle\sum_{k=0}^n\lambda([\rmH(M)]_k) = \displaystyle\sum_{i=1}^q\lambda(\calC_i/\fkq \calC_i)\binom{n+ d_i}{d_i}$$
for all $n \ge 0$, where $\fkq = (x_1,\ldots, x_d)$.
\end{Theorem}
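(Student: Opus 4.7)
The plan is to identify $\rmH(M)$ with the submodule $\gr_I(\calD_q) \subseteq \gr_I(M)$. First I would establish the Valabrega--Valla type identity $I^n M \cap \calD_i = I^n \calD_i$ for all $n \ge 0$ and all $0 \le i \le \ell$, by induction on $i$, using two facts extracted from the good system of parameters hypothesis: the annihilator statement $(x_{d_i+1},\ldots,x_d)\calD_i = (0)$, and the observation that Cohen--Macaulayness of $\calC_i = \calD_i/\calD_{i-1}$ of dimension $d_i$ forces $x_1,\ldots,x_{d_i}$ to be a $\calC_i$-regular sequence. This yields short exact sequences of graded $\rmG$-modules
$$0 \to \gr_I(\calD_{i-1}) \to \gr_I(\calD_i) \to \gr_I(\calC_i) \to 0,$$
and in particular
$$0 \to \gr_I(\calD_q) \to \gr_I(M) \to \gr_I(M/\calD_q) \to 0.$$

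Next I would analyze each $\gr_I(\calC_i)$ by splitting on whether $d_i \le r$ or $d_i > r$. For $i \le q$ we have $d_i \le r$, whence $(x_{d_i+1},\ldots,x_r)\calC_i = (0)$ and so $I \calC_i = \fkq \calC_i = (x_1,\ldots,x_{d_i})\calC_i$; since $x_1,\ldots,x_{d_i}$ is $\calC_i$-regular the standard isomorphism gives
$$\gr_I(\calC_i) \cong (\calC_i/\fkq\calC_i)[T_1,\ldots,T_{d_i}],$$
which has finite length in each degree, is annihilated by a power of $\fkm$, and is Cohen--Macaulay of dimension $d_i$ as a $\rmG$-module. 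Iterating gives $\gr_I(\calD_q) \subseteq \rmH^0_\fkm(\gr_I(M)) = \rmH(M)$, and the filtration $\{\gr_I(\calD_i)\}_{0 \le i \le q}$ exhibits $\gr_I(\calD_q)$ as sequentially Cohen--Macaulay with quotients of strictly increasing dimensions $d_1 < \cdots < d_q$. Conversely, for $i > q$ we have $d_i > r$, so $x_1,\ldots,x_r$ is an honest regular sequence on the Cohen--Macaulay module $\calC_i$, giving $\gr_I(\calC_i) \cong (\calC_i/I\calC_i)[T_1,\ldots,T_r]$ with $\calC_i/I\calC_i$ Cohen--Macaulay of positive dimension $d_i - r$. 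Each such piece has positive depth as a $\rmG$-module, so chasing the analogous filtration of $\gr_I(M/\calD_q)$ by the $\gr_I(\calD_i/\calD_q)$'s gives $\rmH^0_\fkm(\gr_I(M/\calD_q)) = (0)$. Applying $\rmH^0_\fkm(-)$ to the short exact sequence then collapses it to $\rmH(M) = \gr_I(\calD_q)$.

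The Hilbert function formula falls out of the filtration: each $\gr_I(\calC_i)$ with $i \le q$ contributes Hilbert series $\lambda(\calC_i/\fkq\calC_i)/(1-\ttt)^{d_i}$, and summing coefficients from degree $0$ to $n$ yields
$$\sum_{k=0}^{n}\lambda([\rmH(M)]_k) = \sum_{i=1}^{q}\lambda(\calC_i/\fkq\calC_i)\binom{n+d_i}{d_i}.$$
The main obstacle is the very first step: although the Valabrega--Valla identity $I^n M \cap \calD_i = I^n \calD_i$ feels inevitable given the good-sop/sequentially-CM interplay, extracting it cleanly requires a careful combined induction on $i$ and $n$, exploiting the annihilator vanishing $(x_{d_i+1},\ldots,x_d)\calD_i = (0)$ to reduce $I$ to $(x_1,\ldots,x_{d_i})$ on the top stratum and then invoking the regularity of this sequence on the Cohen--Macaulay quotient $\calC_i$.
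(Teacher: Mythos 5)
Your proposal follows essentially the same route as the paper: prove the Valabrega--Valla identity $I^nM\cap\calD_i=I^n\calD_i$ (the paper's Lemma preceding the theorem), use it to filter $\gr_I(M)$ by the $\gr_I(\calD_i)$ with quotients $\gr_I(\calC_i)$, observe that $\rmH^0_\fkm(\gr_I(\calC_i))$ equals all of $\gr_I(\calC_i)$ when $d_i\le r$ and vanishes when $d_i>r$, and then apply $\rmH^0_\fkm(-)$ to collapse everything onto $\gr_I(\calD_q)$, reading off the Hilbert function from the Cohen--Macaulay pieces exactly as the paper does. The only differences are organizational (you package the conclusion via one short exact sequence ending in $\gr_I(M/\calD_q)$ rather than iterating over the $\ell$ sequences), and you correctly flag the intersection identity as the step needing the most care, which is also where the paper is tersest.
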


To prove this, we need the following.

\begin{Lemma}\label{lem1} $I^n M \cap \calD_i = I^n\calD_i$ for $0 \le i \le \ell$ and $n \in \Bbb Z$.
\end{Lemma}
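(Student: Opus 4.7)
The plan is to prove the identity $I^n M \cap \calD_i = I^n \calD_i$ by downward induction on $i$, from $\ell$ to $0$, reducing each inductive step to a Valabrega--Valla style intersection property that leverages the Cohen-Macaulayness of the successive quotients $\calC_{i+1} = \calD_{i+1}/\calD_i$. The cases $i = \ell$ and $n \le 0$ are trivial since $\calD_\ell = M$ and $I^n = R$ respectively, so only $0 \le i < \ell$ and $n \ge 1$ need work.

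For the inductive step, assume $I^n M \cap \calD_{i+1} = I^n \calD_{i+1}$. Since $\calD_i \subseteq \calD_{i+1}$ one has
\[ I^n M \cap \calD_i = \bigl(I^n M \cap \calD_{i+1}\bigr) \cap \calD_i = I^n \calD_{i+1} \cap \calD_i, \]
so it suffices to prove $I^n \calD_{i+1} \cap \calD_i = I^n \calD_i$. I would next establish the following sub-claim: if $0 \to A \to B \to C \to 0$ is an exact sequence of finitely generated $R$-modules and $y_1, \dots, y_s \in \m$ is a regular sequence on $C$, then $J^n B \cap A = J^n A$ for every $n$, where $J = (y_1, \dots, y_s)$. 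Given $y = \sum y^\alpha b_\alpha \in J^n B \cap A$, reducing modulo $A$ yields $\sum y^\alpha \bar{b}_\alpha = 0$ in $C$; regularity of $y_1,\dots,y_s$ on $C$ makes $\gr_J(C) \cong (C/JC)[t_1,\dots,t_s]$ a polynomial ring, so this relation forces each $\bar{b}_\alpha \in JC$, i.e.\ $b_\alpha = \sum_j y_j b_\alpha^{(j)} + a_\alpha$ with $a_\alpha \in A$. Substituting back gives $y = y_0 + y'$ with $y_0 \in J^n A$ and $y' \in J^{n+1} B \cap A$. Iterating yields $y \in J^n A + J^m B$ for all $m \ge n$, and the Krull intersection theorem applied inside the finitely generated module $B/J^n A$ forces $y \in J^n A$.

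I would then apply the sub-claim to $0 \to \calD_i \to \calD_{i+1} \to \calC_{i+1} \to 0$, splitting on $r$ versus $d_{i+1}$. When $r \le d_{i+1}$, the good s.o.p.\ hypothesis says $x_{d_{i+1}+1},\dots,x_d$ annihilate $\calC_{i+1}$, so $x_1,\dots,x_{d_{i+1}}$ is a system of parameters of the Cohen-Macaulay module $\calC_{i+1}$, whence $x_1,\dots,x_r$ is a regular sequence on $\calC_{i+1}$; the sub-claim with $J=I$ then yields $I^n \calD_{i+1} \cap \calD_i = I^n \calD_i$ directly. When $r > d_{i+1}$, the good s.o.p.\ condition $(x_j \mid d_{i+1} < j \le d)\calD_{i+1} = 0$ gives $I^n \calD_{i+1} = J^n \calD_{i+1}$ for $J = (x_1,\dots,x_{d_{i+1}})$, and similarly $I^n \calD_i = J^n \calD_i$ because $x_{d_i+1},\dots,x_{d_{i+1}}$ already annihilate $\calD_i$. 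Since $x_1,\dots,x_{d_{i+1}}$ is a full system of parameters on $\calC_{i+1}$ and hence a regular sequence there, the sub-claim applied to $J$ gives $J^n \calD_{i+1} \cap \calD_i = J^n \calD_i$, which translates back to the desired equality.

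The main obstacle is a clean proof of the sub-claim, particularly the step where the polynomial-ring structure of $\gr_J(C)$ is used to extract $\bar{b}_\alpha \in JC$ from the relation $\sum y^\alpha \bar{b}_\alpha = 0$, together with the subsequent iteration organized so that Krull intersection applies. Once that sub-claim is in hand, the case split and the bookkeeping with the good system of parameters are routine.
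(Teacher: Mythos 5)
Your proof is correct and follows essentially the same route as the paper: peel off the dimension filtration one layer at a time and use the Cohen--Macaulayness of the quotient $\calC_{i+1}$ (whose relevant parameters form a regular sequence) to get $I^n\calD_{i+1}\cap\calD_i=I^n\calD_i$; your sub-claim is exactly the step the paper leaves implicit when it says the key equality "follows from the fact that" the top quotient is Cohen--Macaulay. One cosmetic slip: in the case $r>d_{i+1}$ the reason $I^n\calD_i=J^n\calD_i$ is that $x_{d_{i+1}+1},\dots,x_r$ annihilate $\calD_{i+1}\supseteq\calD_i$ (the same fact you used for $\calD_{i+1}$), not that $x_{d_i+1},\dots,x_{d_{i+1}}$ annihilate $\calD_i$.
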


\begin{proof}
We may assume that $\ell > 1$ and that the assertion holds true for $\ell -1$. Since $\calD_{\ell - 1}$ is a sequentially Cohen-Macaulay $R$-module with  dimension filtration $\{\calD_i\}_{0 \le i \le \ell -1}$ and the system $x_1, \ldots, x_{d_{\ell-1}}$ is a good system of parameters of $\calD_{\ell-1}$, it suffices to show that 
$$I^nM \cap \calD_{\ell-1} = I^n\calD_{\ell-1}$$
for all $n \ge 0$, which follows from the fact that $\calD_\ell$ is a Cohen-Macaulay $R$-module of dimension $d= d_\ell$.
\end{proof}

Since $I^n\calC_i  =[I^n \calD_i + \calD_{i-1}]/{\calD_{i-1}}$, we have the exact sequence 
$$0 \to I^n\calD_{i-1} \to I^n\calD_i \to I^n \calC_i \to 0$$
of $R$-modules for each $n \in \Bbb Z$, which yields the  exact sequence
$$0 \to \rmG(\calD_{i-1}) \to \rmG(\calD_i) \to \rmG(\calC_i) \to 0$$
of graded $\rmG$-modules, where $\rmG(\calD_i) = \gr_I(\calD_i)$ and $\rmG(\calC_i)=\gr_I(\calC_i)$. Remember that 
$$\rmG(\calD_i) = \mathrm{gr}_J(\calD_i)\ \ \text{and}\ \ \rmG(\calC_i) = \mathrm{gr}_J(\calC_i),$$ where $J = I$ if $r \le d_i$ and $J = (x_1, \ldots, x_{d_i})$ if $r > d_i$. Hence we get the following.

\begin{Lemma}\label{lem3} Let $1 \le i \le \ell$. Then 
\[ \rmH_\m^0(\rmG(\calC_i)) = \left\{
\begin{array}{ll}
\rmG(\calC_i) & \quad \mbox{if} \ \  d_i \le r, \vspace{4mm}\\
(0) & \quad \mbox{if} \ \  r < d_i.
\end{array}
\right.\] 
\end{Lemma}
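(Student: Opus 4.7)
The plan is to reduce the statement to a direct inspection of each graded piece of $\gr_I(\calC_i)$ using the Cohen--Macaulayness of $\calC_i$ and the good-parameter condition. The crucial preparatory observation, already recorded in the remark preceding the lemma, is that the elements $x_{d_i+1},\ldots,x_d$ annihilate $\calD_i$ and therefore also $\calC_i = \calD_i/\calD_{i-1}$. Consequently $I\calC_i = J\calC_i$, where $J=(x_1,\ldots,x_r)$ if $r \leq d_i$ and $J=(x_1,\ldots,x_{d_i})$ if $r > d_i$, and $\gr_I(\calC_i) = \gr_J(\calC_i)$ as graded modules. So I will work throughout with the ideal $J$ on $\calC_i$.

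For the case $d_i \leq r$, the ideal $J=(x_1,\ldots,x_{d_i})$ is a parameter ideal of the Cohen--Macaulay module $\calC_i$ of dimension $d_i$, so $x_1,\ldots,x_{d_i}$ forms a $\calC_i$-regular sequence. Then $\calC_i/J\calC_i$ has finite length, and for a regular sequence one has the standard identification $J^n\calC_i/J^{n+1}\calC_i \cong \Sym^n(J/J^2)\otimes_{R/J}(\calC_i/J\calC_i)$, which also has finite length. Every homogeneous element of $\gr_J(\calC_i)$ is therefore $\m$-torsion, giving $\rmH^0_\m(\gr_I(\calC_i)) = \gr_I(\calC_i)$ as claimed.

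For the case $r < d_i$, we have $J = I$, and $x_1,\ldots,x_r$ is the initial segment of a system of parameters of the Cohen--Macaulay module $\calC_i$, hence a $\calC_i$-regular sequence. The quotient $\calC_i/I\calC_i$ is therefore Cohen--Macaulay of positive dimension $d_i - r > 0$, so $\depth(\calC_i/I\calC_i) > 0$ and in particular $\rmH^0_\m(\calC_i/I\calC_i) = (0)$. Again the regularity gives $I^n\calC_i/I^{n+1}\calC_i \cong \Sym^n(I/I^2)\otimes_{R/I}(\calC_i/I\calC_i)$, which is a finite direct sum of copies of $\calC_i/I\calC_i$ and thus has vanishing $\rmH^0_\m$ in each degree. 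Summing over $n$ yields $\rmH^0_\m(\gr_I(\calC_i)) = (0)$.

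There is no real obstacle here beyond bookkeeping: the only subtle point is ensuring that the passage from $I$ to the effective parameter ideal $J$ is justified (which it is, by the good-parameter hypothesis on $M$), and that the standard ``regular sequence implies polynomial associated graded module'' identification applies to the module $\calC_i$, not just to rings. This identification is a direct consequence of the Koszul criterion for regular sequences, so the argument is essentially a two-line verification in each of the two cases.
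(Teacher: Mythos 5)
Your proof is correct and follows exactly the route the paper intends: the lemma appears there without a separate proof, as an immediate consequence of the preceding observation that $\rmG(\calC_i)=\gr_J(\calC_i)$ with $J$ the effective parameter ideal ($J=I$ if $r\le d_i$, $J=(x_1,\ldots,x_{d_i})$ if $r>d_i$), combined with the Cohen--Macaulayness of $\calC_i$. The only cosmetic point is that the graded pieces are better described via the isomorphism $\gr_J(\calC_i)\cong(\calC_i/J\calC_i)[T_1,\ldots,T_s]$ valid for a $\calC_i$-regular sequence (rather than via $\Sym^n(J/J^2)\otimes_{R/J}(\calC_i/J\calC_i)$, since $J/J^2$ need not be $R/J$-free), but the conclusion you draw --- that each graded piece is a finite direct sum of copies of $\calC_i/J\calC_i$ --- is exactly what is needed in both cases.
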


We are now ready to prove Theorem~\ref{thm2}.

\medskip

{\bf {\em Proof of Theorem~\ref{thm2}.}}
We apply the functor $\rmH^0_\m(*)$ to 
the exact sequences
$$0 \to \rmG(\calD_{i-1}) \to \rmG(\calD_i) \to \rmG(\calC_i) \to 0$$
$(1 \le i \le \ell)$ and get by (\ref{3}) that
$$\rmH(M) \cong \rmG(\calD_q),$$ since $\rmH^0_\m(\rmG(\calC_i)) = (0)$ if $q < i$. Suppose $q > 0$, that is $d_1 \le r$. Then $\{\rmG(\calD_i)\}_{0 \le i \le q}$ gives rise to the dimension filtration of $\rmG(\calD_i)$, since the graded module $\rmG(\calC_i)~(1 \le i \le q)$ is Cohen-Macaulay and $\dim \rmG(\calC_i) = d_i$. Hence $\dim \rmH(M) = d_q \le r$, whose Hilbert function is given by
\begin{eqnarray*}
\displaystyle\sum_{k=0}^n\lambda([\rmH(M)]_k) 
&=& \sum_{k=0}^n\left\{\displaystyle\sum_{i=1}^q\lambda([\rmG(\calC_i)]_k)\right\}\\
&=&\displaystyle\sum_{i=1}^q\lambda(\calC_i/\fkq \calC_i)\binom{n+ d_i}{d_i}
\end{eqnarray*}
for all $n \ge 0$.
\QED

We close this paper with the following.

\medskip

\begin{Example}\label{6.12}{\rm Let $(R,\fkn)$ be a regular local ring of dimension $d+1~(d \ge 2)$ and let $X, Y_1, Y_2, \ldots, Y_d$ be a regular system of parameters of $R$. We set $\fkp = (Y_1, Y_2, \ldots, Y_d)$, $M = R/[(X) \cap \fkp]$, and $V = R/\fkp$. Then $\dim M = d$ and the dimension filtration of $M$ is given by
$$\calD_0 = (0) \subsetneq \calD_1 = Rx \subsetneq \calD_2 = M,$$
where $x$ is the image of $X$ in $M$ and $V \cong Rx$ as an $R$-module, so that $M$ is a sequentially Cohen-Macaulay $R$-module. Let $\xx = x_1, \ldots, x_r~(0 \le r \le d)$ be an arbitrary partial system of parameters of $M$ and set $I = (\xx)$. We then have
\[ \rmH(M) \cong \left\{
\begin{array}{ll}
(0) & \quad \mathrm{if} \quad I \subseteq \fkp,
\vspace{4mm}\\
\gr_I(V)& \quad \mathrm{if}  \quad I \not\subseteq \fkp \ \ \mathrm{and}  \ \  r < d,
\vspace{4mm}\\
\gr_I(M) & \quad \mathrm{if} \quad r = d.
\end{array}
\right.\]
Notice that  when $0 < r$ and $I \subseteq \fkp$, $\xx$ forms a $d$-sequence relative to $M$, but $\xx$ is not amenable for $M$, because $(0):_MI = Rx$ and $\dim Rx = 1$.
}\end{Example}

\begin{proof} Let $N = Rx$, $C = R/(X)$, and we consider the exact sequence
$$0 \to N \to M \to C \to 0$$
of $R$-modules. Suppose  $I \subseteq \fkp$. Then  $r < d$ and $\xx$ is a $C$-regular sequence. Therefore since $I^{n+1}M \cap N = I^{n+1}N = (0)$ for all $n \ge 0$, we get
the exact sequence
$$0 \to N \to M/I^{n+1}M \to C/I^{n+1}C \to 0$$
of $R$-modules, which shows $\rmH_\fkm^0(M/I^{n+1}M) = (0)$, because $C/I^{n+1}C$ is a Cohen-Macaulay $R$-module of dimension $d - r > 0$. Thus $\rmH(M) = (0)$.

Suppose $I \not\subseteq \fkp$. Then $IV \ne (0)$ and $0 < r$. We choose $1 \le i \le r$ so that $IV = x_iV$ (this choice is possible, because $V$ is a discrete valuation ring). Let $1 \le j \le r$.  Then $x_j \in x_iR + \fkp$. We write $x_j = x_ic_j + y_j$ with $c_j \in R$ and $y_j \in \fkp$. Hence $I = (x_i) + (y_j \mid 1 \le j \le r, \ j \ne i)$. Because $y_j \in \fkp$ for all $j$ and $x_iN \ne (0)$, the sequence $x_i, \{y_j\}_{1 \le j \le r, j \ne i}$ is extended to a good system of parameters of $M$. Therefore, if $r < d$, $\rmH(M) \cong \gr_I(V)$ by Theorem \ref{thm2}, while $\rmH(M) = \gr_I(M)$, if $r = d$.
\end{proof}

\end{document}